\documentclass[11pt, reqno]{amsart}

\usepackage{amssymb,mathrsfs,graphicx,extpfeil}
\usepackage{amsmath,amsfonts,amssymb,amscd,amsthm,bbm}
\usepackage{dsfont}
\usepackage{epsfig}
\usepackage{comment}
\usepackage{caption}
\usepackage{subcaption}
\usepackage{graphicx,colortbl}
\usepackage{bm}
\usepackage{tikz}
\usepackage{tikz-cd}
\usetikzlibrary{matrix,shapes,arrows,positioning, chains}
\usepackage[unicode,hypertexnames=false]{hyperref}
\hypersetup{colorlinks,breaklinks,linkcolor=blue,urlcolor=blue,anchorcolor=blue,citecolor=blue}
\usepackage[nameinlink, capitalise, noabbrev]{cleveref}
\usepackage[shortlabels]{enumitem}
\usepackage{kotex}
\usepackage[autostyle]{csquotes}
   \MakeOuterQuote{"}

\title[]{Emergent behaviors of Winfree oscillators on special orthogonal group}

\author[Ha]{Seung-Yeal Ha}
\address[Seung-Yeal Ha]{\newline Department of Mathematical Sciences and Research Institute of Mathematics \newline Seoul National University, Seoul 08826, Republic of Korea}
\email{syha@snu.ac.kr}

\author[Lee]{Chaejoo Lee}
\address[Chaejoo Lee]{\newline Department of Mathematical Sciences\newline Seoul National University, Seoul 08826, Republic of Korea}
\email{leechaeju@snu.ac.kr}

\author[Lee]{Eunjun Lee}
\address[Eunjun Lee]{\newline Department of Mathematical Sciences\newline Seoul National University, Seoul 08826, Republic of Korea}
\email{eunjun.lee@snu.ac.kr}

\author[Lee]{Jaemoon Lee}
\address[Jaemoon Lee]{\newline Department of Mathematical Sciences\newline Seoul National University, Seoul 08826, Republic of Korea}
\email{dlwoans0001@snu.ac.kr}

\author[Ryoo]{Seung-Yeon Ryoo}
\address[Seung-Yeon Ryoo]{Division of Physics, Mathematics and Astronomy,
\newline California Institute of Technology,
Pasadena, CA 91125, USA}
\email{sryoo@caltech.edu}

\newtheorem{theorem}{Theorem}[section]
\newtheorem{lemma}{Lemma}[section]
\newtheorem{corollary}{Corollary}[section]
\newtheorem{proposition}{Proposition}[section]
\newtheorem{remark}{Remark}[section]
\newtheorem{definition}[theorem]{Definition}

\newcommand{\bbr}{\mathbb R}

\newcommand{\bbz}{\mathbb Z}

\newcommand{\tr}{\mathrm{tr}\, }

\DeclareMathOperator*{\argmin}{argmin}

\begin{document}
\tikzstyle{block} = [rectangle, draw, 
text width=15em, text centered, rounded corners, minimum height=3em]
\tikzstyle{line} = [draw, -latex']

\date{\today}

\subjclass{34C40, 34C15, 34D06} 
\keywords{Winfree model, special orthogonal group, synchronization, emergent dynamics, oscillator death}
\thanks{\textbf{Acknowledgment.}
The work of S.-Y. Ha was supported by the NRF grant (RS\_2025-00514472).}
\begin{abstract}
We propose a generalized matrix-valued synchronization model which can be regarded as matrix generalization of the classical Winfree model to the special orthogonal group, and we provide several sufficient frameworks leading to the emergent behaviors of the Winfree matrix model. For $SO(2)$ case, the proposed model reduces to the classical Winfree model. For the general (non-identical) case, we prove the existence of a positively invariant trapping region, establish a leader--follower mechanism in which sufficiently strong coupling draws all oscillators into a neighborhood of the identity whenever at least one oscillator is initially nearby, and show $\ell^1$-exponential stability of solutions, from which we deduce existence, uniqueness, and exponential convergence to an equilibrium. In the identical-oscillator regime, we show that complete state synchronization and oscillator death both occur exponentially fast with an explicit decay rate, and we classify all equilibrium configurations as solutions to a fixed-point equation for the mean influence.
\end{abstract}

\maketitle

\centerline{\date}


\section{Introduction} \label{sec:1}
\setcounter{equation}{0}
Synchronization arises across a wide range of physical and biological systems, from collective animal motion to rhythmic activity in engineering. Well-known examples include flocking in birds and fish \cite{flierl1999individuals,vicsek1995novel}, synchronous flashing of fireflies \cite{buck1966biology}, collective dynamics of cardiac pacemaker cells \cite{peskin1975mathematical}, and frequency synchronization in power networks \cite{chiang2011direct,kundur2007power,sauer2017power}. A variety of mathematical models have been proposed to capture such emergent behavior. To name a few, the Winfree and Kuramoto models for coupled oscillators \cite{winfree1967biological,kuramoto1975international}, and the Vicsek and Cucker--Smale models for flocking \cite{cucker2007emergent,vicsek1995novel}. Among them, we are interested in the high-dimensional counterpart of the Winfree model  \cite{winfree1967biological}. To set up the stage, we begin with the brief description of the Winfree model. 

Let $\theta_i(t) \in \mathbb{S}^1$ be the phase of the $i$-th Winfree oscillator; lifting to $\mathbb{R}$ modulo $2\pi$. Then the dynamics of phase is governed by the following Cauchy problem for the coupled system of ordinary differential equations:
\begin{equation} \label{A-1}
\begin{cases}
\displaystyle \dot{\theta}_i = \nu_i + \kappa \Big( \frac{1}{N} \sum_{j=1}^{N} I(\theta_j) \Big) S(\theta_i), \quad t > 0,  \\
\displaystyle  \theta_i \Big|_{t = 0} = \theta_i^0,  \quad i \in [N] := \{1, \cdots, N \}, 
\end{cases}
\end{equation}
where $\nu_i$ is the stationary natural frequency of the $i$-th oscillator, $\kappa > 0$ is the coupling strength, and $S,I: \mathbb{R} \to \mathbb{R}$ are $2\pi$-periodic sensitivity and influence functions. Depending on the choice of $S, I$, the natural frequencies, and $\kappa$, this model exhibits a rich variety of collective behaviors, including incoherence \cite{ariaratnam2001phase}, partial entrainment \cite{ha2017emergence}, phase-locked states \cite{oukil2017synchronization,oukil2019invariant}, and oscillator-death equilibria \cite{ha2015emergence,ha2016emergent,ryoo2026oscillator} etc.

Among others, we are interested in the high-dimensional generalization of the classical Winfree model \eqref{A-1} on matrix Lie group and  the {\it "relaxation dynamics to equilibrium"} in which all oscillators converge to a common distinguished phase. This viewpoint naturally motivates extending the Winfree model to higher-dimensional spaces: one fixes a preferred attraction point $p$
 on a manifold $M$, and oscillator death becomes convergence of all oscillators toward a neighborhood of $p$. Recently, Hansol Park \cite{park2021generalization} carried out this program on the sphere $\mathbb{S}^{n-1}$ with $n \geq 3$. Let $x_i = x_i(t) \in {\mathbb R}^{n}$ be the state of the $i$-th Winfree oscillator on the unit sphere ${\mathbb S}^{n-1}$. Then, its temporal dynamics is governed by the Winfree sphere model:
\begin{equation}\notag\label{A-1-1}
\begin{cases}
\displaystyle \dot{x}_i = \Omega_i x_i + \kappa \eta(x_i) \Big( \frac{1}{N} \sum_{j=1}^{N} I(x_j) \Big) (e - \langle x_i, e \rangle x_i), \quad t > 0, \\
\displaystyle x_i \Big|_{t = 0} = x_i^0, \quad i \in [N], 
\end{cases}
\end{equation}
where $\Omega_i$ is a $n \times n$ anti-symmetric matrix, $e \in \mathbb{S}^{n-1} (\subset {\mathbb R}^{n})$ is the attraction point, and $\eta(x_i) = S(x_i) / \|e - \langle x_i, e \rangle x_i\|$. The forcing term is projected onto the tangent space $T_{x_i}\mathbb{S}^{n-1}$ so that trajectories remain on the unit sphere.

The purpose of this paper is to extend the Winfree model to the special orthogonal group $SO(n)$, and then provide several frameworks leading to the emergent collective behaviors. This setting is natural for several reasons. First, many physical systems of interest — including attitude synchronization of rigid bodies, coupled quantum oscillators, and multi-dimensional biological rhythms — are described by phases taking values in higher-dimensional spaces rather than the circle $\mathbb{S}^1$. 

Second, by the Peter--Weyl theorem, every compact Lie group\footnote{One may ask what happens for noncompact Lie groups. It turns out that the analysis done here is essentially local and may be extended to any dimensional Riemannian manifold; see our companion paper \cite{ha2026convergence}.} embeds into a unitary group, which in turn embeds into an orthogonal group of higher dimension; thus by the locality of our analysis, $SO(n)$ provides a general and natural framework for Winfree-type dynamics on compact Lie groups. We also note that synchronization on Lie groups has attracted considerable recent interest through the non-abelian Kuramoto model of Lohe \cite{lohe2009non}, whose stability properties have been extensively studied in \cite{deville2019synchronization,ha2016emergencelohe,ha2017emergent,ryoo2025asymptotic}; however, Winfree-type interactions in this setting remain largely unexplored, the only works known to the authors being \cite{ha2021collective,park2021generalization}. We refer to \cite{ha2016collective} for a survey on the related topic of quantum synchronization,  \cite{fetecau2021intrinsic} for related work on aggregation models on $SO(3)$ and \cite{lynch2017modern} for the applications to robotics. Next, we return to the design problem for Winfree oscillators on $SO(n)$.

Let $R_i \in SO(n)$ denote the state (or generalized phase) of the $i$-th oscillator, and let $I_n$ denote the $n \times n$ identity matrix , which serves as the attraction point (this does not hurt the generality, see Lemma \ref{L2.2}). The Cauchy problem for the Winfree matrix model on $SO(n)$ reads as follows:
\begin{equation} \label{A-2}
\begin{cases}
\displaystyle \dot{R}_i = \Omega_i R_i + \frac{\kappa}{2} \Big( \frac{1}{N} \sum_{j=1}^{N} I(R_j)\Big)(I_n - R_i^2), \quad t > 0, \\
\displaystyle R_i \Big|_{t = 0} = R_i^0 \in SO(n), \quad \Omega_i \in \mathfrak{so}(n), \quad i \in [N].
\end{cases}
\end{equation}
The term $(I_n - R_i^2)$ plays the role of the projection onto the tangent space of $SO(n)$ at $R_i$, pulling each oscillator toward the identity, while the influence function $I(R_j)$ encodes the strength of that pull as a function of the $j$-th oscillator's distance from $I_n$. Our main contributions are three-fold:
\smallskip

\noindent\textbf{(1) Model formulation on matrix Lie groups:} We propose a Winfree-type interacting oscillator model on general matrix Lie groups and derive an explicit formulation for $SO(n)$. We also verify that for $n= 2$, the model reduces to the classical Winfree model on $\mathbb{S}^1$ (see Section \ref{sec:2.2} and Section \ref{sec:2.3}).

\smallskip
\noindent\textbf{(2) Emergent dynamics and stability:} We analyze the collective behavior of the Winfree model on $SO(n)$. Under suitable conditions on the coupling strength and initial data, we prove the existence of a trapping region. We establish a leader--follower (herding) mechanism: if at least one oscillator lies initially close to the attraction point $I_n$, then sufficiently strong coupling draws all oscillators into a small neighborhood of $I_n$ (see Proposition \ref{P3.2} and Corollary \ref{C3.1}). Finally, we establish $\ell^1$-exponential stability of solutions (see Theorem \ref{T3.1}), from which we deduce existence and uniqueness of an equilibrium near $I_n$ and exponential convergence to it (see Section \ref{sec:4} and Theorem \ref{T4.1}).

\smallskip
\noindent\textbf{(3) Identical-oscillator regime:} When all oscillators share the same natural frequency matrix $\Omega_i = \Omega$ for all $i \in [N]$, we prove that relaxation to equilibrium occurs exponentially fast (see Theorem \ref{T5.1}). We also classify all equilibrium configurations in this regime (see Section \ref{sec:5.2}).

\vspace{0.5cm}

The rest of the paper is organized as follows. In Section~\ref{sec:2}, we recall relevant concepts and results for the classical Winfree and sphere models, and collects basic facts on Lie groups and Lie algebras, and present abstract Winfree model on a Lie group $G$. In Section~\ref{sec:3}, we provide an existence of a trapping region and uniform stability of the Winfree matrix model $SO(n)$. In Section~\ref{sec:4}, we establish the existence of equilibria. In Section~\ref{sec:5}, we provide the emergence of complete state synchronization for a homogeneous ensemble. Finally, Section \ref{sec:6} is devoted to a brief summary of our main results and some remaining issues to be explored for a future work. In Appendix \ref{App-A}, we provide proofs for several assertions in Lemma \ref{L2.3} and \ref{L4.1}.

\vspace{0.5cm}

\noindent {\bf Gallery of Notations}:~We denote the standard Euclidean norm of $x \in \mathbb{R}^n$ by $\|x\|_{\ell^2}$. For a Banach  
space $X$ with norm $\|\cdot\|_X$, we define the $\ell^1(X)$ norm of 
$\Phi = (\phi_1, \dots, \phi_N) \in X^N$ by
\[
\|\Phi\|_{\ell^1(X)} = \sum_{i=1}^{N} \|\phi_i\|_X.
\]
For matrices $A, B \in \mathbb{R}^{n \times n}$, we use the constant multiple of Frobenius
inner product and its associated Frobenius norm:
\begin{equation}\label{A-3}
\langle A, B \rangle := \frac12\operatorname{tr}(A^T B), 
\qquad \|A\| := \sqrt{\langle A, A \rangle}.
\end{equation}
This modified Frobenius inner product(Hilbert-Schmidt inner product) was introduced in \cite{fetecau2021intrinsic} as a Riemannian metric on the tangent space of $SO(3)$. We use it as the inner product on $\bbr^{n\times n}$. This choice simplifies the distance formula appearing in Lemma \ref{L2.3}.

\section{Preliminaries}\label{sec:2}
\setcounter{equation}{0}
In this section, we recall relevant definitions for the descriptions of the classical Winfree model and the Winfree sphere model on ${\mathbb S}^1$ and ${\mathbb S}^{n-1}$ with $n \geq 3$, respectively, and present basic facts on Lie groups and Lie algebras to be used throughout the rest of this paper. 

\subsection{Elementary jargons} \label{sec:2.1} In this subsection, we briefly review several elementary concepts to be used in later sections. The Winfree model \eqref{A-1} exhibits a variety of collective 
behaviors depending on the choice of sensitivity and influence functions $S$ and $I$ 
and the coupling strength $\kappa$. First, we recall the several concepts of collective behaviors of Winfree oscillators as follows. 
\begin{definition} \label{D2.1}
Let $\Theta = \{ \theta_i \}_{i=1}^{N}$ and $\Theta^\infty = \{ \theta_i^\infty \}_{i=1}^{N}$ be a global solution and an equilibrium to the Winfree model  \eqref{A-1}, respectively:
\begin{enumerate}
   \item
   $\Theta^{\infty}$ is an equilibrium if the following relations hold.
   \[  \nu_i + \kappa \Big( \frac{1}{N} \sum_{j=1}^{N} I(\theta^{\infty}_j) \Big) S(\theta^{\infty}_i) = 0, \quad \forall~i \in [N]. \]
   \item $\Theta$ relaxes to  the equilibrium $\Theta^{\infty}$ if the following relations hold. 
    \[
    \lim_{t \to \infty} |\theta_i(t) - \theta_i^\infty| = 0, ~~\forall\, i \in [N].
    \]
    \item $\Theta$ exhibits complete phase synchronization if  the following relation holds.
    \[
    \lim_{t \to \infty} \max_{i, j \in [N]} |\theta_i(t) - \theta_j(t)| = 0.
    \]
    \item $\Theta$ exhibits complete (frequency) synchronization if  the following relation holds.
    \[
    \lim_{t \to \infty} \max_{i, j \in [N]} |{\dot \theta}_i(t) - {\dot \theta}_j(t)| = 0.
    \]
 \end{enumerate}
\end{definition}
\begin{remark}
The above three modes of collective behaviors — convergence to a common equilibrium, zero convergence of relative phases
and relative frequencies — are primary phenomena appearing in the oscillator ensemble  that we seek to 
generalize to the special orthogonal group.  We note that convergence to equilibrium corresponds to the 
oscillator death and it has been studied in detail in \cite{ha2015emergence, ha2016emergent, ryoo2026oscillator}.
\end{remark}
Next, we recall the relevant notions from the theory of Lie groups and Lie 
algebras.
\begin{definition}\label{D2.2}
Let $G$ be a group with the identity element $e$. 
\begin{enumerate}
\item
A Lie group is a group $G$ equipped with the structure of a smooth 
manifold such that the multiplication map $G \times G \to G$, 
$(g, h) \mapsto gh$, and the inversion map $G \to G$, $g \mapsto g^{-1}$, 
are both smooth. 
\vspace{0.1cm}
\item
The associated Lie algebra $\mathfrak{g} := T_e G$ is the 
tangent space of $G$ at the identity element $e \in G$, endowed with the Lie bracket 
$[\cdot, \cdot] : \mathfrak{g} \times \mathfrak{g} \to \mathfrak{g}$ 
satisfying anti-symmetry and the Jacobi identity:
\[ [X, Y] = -[Y, X], \quad 
[X, [Y, Z]] + [Y, [Z, X]] + [Z, [X, Y]] = 0, 
\quad \forall\, X, Y, Z \in \mathfrak{g}.
\]
\end{enumerate}
\end{definition}
\begin{remark}
Below, we provide several comments.
\begin{enumerate}
\item
For the matrix Lie group $G \subset GL(n, \mathbb{R})$, the Lie bracket reduces 
to the matrix commutator $[X, Y] = XY - YX$.
\item
The principal example for the pair of Lie group and its associated Lie algebra is the special orthogonal group and its Lie algebra:
\[
SO(n) = \{Q \in \mathbb{R}^{n \times n} : Q^T Q = I_n,\ \det Q = 1\},
\quad
\mathfrak{so}(n) = \{X \in \mathbb{R}^{n \times n} : X^T = -X\}.
\]
\end{enumerate}
\end{remark}

\subsection{Abstract Winfree model on a matrix Lie group $G$} \label{sec:2.2} 
Recall that the Lie algebra $\mathfrak{g}$ of a matrix Lie group $G$ is a 
subspace of $\mathbb{R}^{n \times n}$, hence it is closed, and so it admits a unique 
orthogonal projection with respect to the inner product defined in the gallery of 
notations \eqref{A-3} at the end of Section \ref{sec:1}. Namely, the projection map ${\mathbb P}$ onto a closed subspace ${\mathcal Y}$ of 
$\mathbb{R}^{n \times n}$ is defined as follows. For $X  \in \mathbb{R}^{n \times n}$, 
\begin{equation} \label{B-1}
    {\mathbb P}_{\mathcal Y} X = \argmin_{Y \in {\mathcal Y}}\ \|X- Y\|^2 = \argmin_{Y \in {\mathcal Y}}\ \langle X-Y, X-Y \rangle.
\end{equation}
As a concrete example, we consider the orthogonal projection from $\mathbb{R}^{n \times n}$ on $\mathfrak{so}(n)$. More precisely, for any $X \in \mathbb{R}^{n \times n}$,  the orthogonal projection is explicitly given as follows.
\begin{equation*} \label{B-2}
{\mathbb P}_{\mathfrak{so}(n)}(X) = \frac{X - X^T}{2}.
\end{equation*}

Next, we present the abstract Winfree model on the matrix Lie group. Note that the Winfree model \eqref{A-1} on $\mathbb{S}^1$ consists of two 
contributions to the phase velocity of each oscillator: a natural frequency 
term $\nu_i$, and an attraction term that pulls $\theta_i$ toward a 
distinguished phase. To generalize this to a matrix Lie group $G$, we 
must ensure that the velocity $\dot{R}_i$ remains in the tangent space 
$T_{R_i}G$ at all times, so that $R_i(t)$ stays on the manifold $G$. Since $G$ is a matrix Lie group, the tangent space at $R_i$ can be identified with right translation of the Lie algebra $\mathfrak{g}$:
\[
T_{R_i}G = \mathfrak{g}R_i.
\]
The natural frequency term is therefore $\Omega_i R_i$ for a fixed 
$\Omega_i \in \mathfrak{g}$. For the attraction term, we fix a preferred 
point $Q \in G$ and project the displacement $Q - R_i$ onto $T_{R_i}G$:
\[
{\mathbb P}_{\mathfrak{g}R_i}(Q - R_i).
\]
Following the philosophy of the Winfree model, the strength of this 
attraction is modulated by a sensitivity function $\eta(R_i)$ and 
weighted by the influence $I(R_j)$ of each other oscillator $j \in [N]$. This 
leads to the abstract Winfree model on $G$:
\begin{equation}\label{B-3}
\begin{cases}
\displaystyle \dot{R}_i = \Omega_i R_i 
+ \kappa \eta(R_i)  \Big( \frac{1}{N} \sum_{j=1}^{N} I(R_j)\Big)
{\mathbb P}_{\mathfrak{g}R_i}(Q - R_i), \quad t > 0, \\[6pt]
R_i \Big|_{t = 0} = R_i^0 \in G, \quad \Omega_i \in \mathfrak{g}, \quad ~i \in [N].
\end{cases}
\end{equation}
Since $\dot{R}_i \in T_{R_i}G$ by construction and $R_i^0 \in G$, we have
\begin{equation*} 
R_i(t) \in G \quad \text{for all}\ t \geq 0. 
\end{equation*}
For a general matrix Lie group, the projection ${\mathbb P}_{\mathfrak{g}R_i}$ might not have a simple form in general. However, for $G = SO(n)$, we can compute it explicitly as follows.
\begin{lemma} \label{L2.1}
For a fixed $R \in SO(n)$, the orthogonal projection onto the tangent 
space $T_R SO(n) = \mathfrak{so}(n) R$ satisfies the right-invariance property:
\[
{\mathbb P}_{\mathfrak{so}(n) R}(X) = {\mathbb P}_{\mathfrak{so}(n)}(XR^{T})\, R 
= \frac{XR^T - RX^T}{2}\, R.
\]
\end{lemma}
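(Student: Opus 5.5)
The plan is to use the fact that right multiplication by $R \in SO(n)$ is an isometry of $(\mathbb{R}^{n\times n}, \langle\cdot,\cdot\rangle)$. Once we have that, the projection onto $\mathfrak{so}(n)R$ is obtained from the projection onto $\mathfrak{so}(n)$ by conjugating with this isometry.

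\emph{Step 1 (isometry).} For $A,B \in \mathbb{R}^{n\times n}$, cyclicity of the trace and $RR^T = I_n$ give
\[
\langle AR, BR\rangle = \tfrac12 \operatorname{tr}(R^T A^T B R) = \tfrac12\operatorname{tr}(A^T B R R^T) = \langle A, B\rangle .
\]
Hence the map $\rho_R : A \mapsto AR$ is a linear isometry with inverse $\rho_{R^T}$. It sends the subspace $\mathfrak{so}(n)$ bijectively onto $\mathfrak{so}(n)R$, so $\mathfrak{so}(n)R$ is a closed subspace and the projection \eqref{B-1} onto it is well defined.

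\emph{Step 2 (transport the argmin).} Every $Y \in \mathfrak{so}(n)R$ can be written uniquely as $Y = ZR$ with $Z \in \mathfrak{so}(n)$. Then, using Step 1,
\[
\|X - ZR\| = \|(XR^T - Z)R\| = \|XR^T - Z\| .
\]
So minimizing $\|X-Y\|$ over $Y \in \mathfrak{so}(n)R$ is the same as minimizing $\|XR^T - Z\|$ over $Z \in \mathfrak{so}(n)$ and then multiplying the minimizer on the right by $R$. This gives ${\mathbb P}_{\mathfrak{so}(n)R}(X) = {\mathbb P}_{\mathfrak{so}(n)}(XR^T)\,R$.

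\emph{Step 3 (explicit formula).} Here we use ${\mathbb P}_{\mathfrak{so}(n)}(A) = (A - A^T)/2$, stated above. Its justification: $A = \frac{A-A^T}{2} + \frac{A+A^T}{2}$, and symmetric and skew-symmetric matrices are orthogonal because $\operatorname{tr}(S K) = 0$ whenever $S^T=S$ and $K^T=-K$. Substituting $A = XR^T$ and $(XR^T)^T = RX^T$ gives
\[
{\mathbb P}_{\mathfrak{so}(n)R}(X) = \frac{XR^T - RX^T}{2}\,R .
\]
There is no real obstacle here. The only point requiring care is Step 1, namely that the modified Frobenius inner product is right-invariant under orthogonal matrices, and that follows directly from trace cyclicity.
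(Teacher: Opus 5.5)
Your proposal is correct and follows the same route as the paper. The paper's proof also writes $S=\tilde S R$ with $\tilde S\in\mathfrak{so}(n)$ and uses trace cyclicity with $RR^T=I_n$ to reduce the minimization over $\mathfrak{so}(n)R$ to minimizing $\|XR^T-\tilde S\|$ over $\mathfrak{so}(n)$. It then reads off the formula from ${\mathbb P}_{\mathfrak{so}(n)}(A)=(A-A^T)/2$; your Step 3 also justifies this projection formula, which the paper only states.
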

\begin{proof}
For given $R\in SO(n)$ and $X\in \bbr^{n\times n}$, we use \eqref{B-1} to find 
\begin{equation*}
    \begin{aligned}
        {\mathbb P}_{\mathfrak{so}(n)R}(X) &= \argmin_{S\in \mathfrak{so}(n)R} \langle X-S, X-S  \rangle \\
        &=\argmin_{S\in\mathfrak{so}(n) R} \tr\Big((X-S)^T(X-S)\Big) \\
        &=\Big[ \argmin_{\tilde S\in \mathfrak{so}(n)} \tr \Big(R^T(XR^T-\tilde S)^T(XR^T-\tilde S)R\Big)\Big] R \\
        &= \Big[\argmin_{\tilde S\in \mathfrak{so}(n)} \tr \Big((XR^T-\tilde S)^T (XR^T-\tilde S)\Big)\Big] R \\
        &={\mathbb P}_{\mathfrak{so}(n)}(XR^T)R.
    \end{aligned}
\end{equation*}
\end{proof}
\noindent We apply Lemma \ref{L2.1} to the attraction term with $X = Q - R_i$  to find 
\begin{align}
\begin{aligned} \label{B-4}
{\mathbb P}_{\mathfrak{so}(n)R_i}(Q - R_i) &= {\mathbb P}_{\mathfrak{so}(n)}\big((Q-R_i)R_i^T\big)\,R_i  \\
 &= \frac{(Q - R_i)R_i^T - R_i(Q - R_i)^T}{2}\,R_i = \frac{QR_i^T - R_iQ^T}{2}\,R_i.
\end{aligned}
\end{align}
Substituting \eqref{B-4} into \eqref{B-3}, we obtain the Winfree matrix model on $SO(n)$:
\begin{equation}\label{B-5}
\begin{cases}
\displaystyle \dot{R}_i R_i^T = \Omega_i 
+ \frac{\kappa}{2} \eta(R_i)  \Big(\frac{1}{N} \sum_{j=1}^{N} I(R_j)\Big)
(QR_i^T - R_iQ^T), \quad t > 0,  \\[6pt]
R_i \Big|_{t = 0} = R_i^0 \in SO(n), \quad \Omega_i \in \mathfrak{so}(n), \quad ~ i \in [N].
\end{cases}
\end{equation}
Note that a global well-posedness of solutions to \eqref{B-5} can be guaranteed by  
the Cauchy--Lipschitz theorem, since $SO(n)$ is compact and the 
right-hand side is Lipschitz continuous; see the discussion below~\eqref{A-2}. \newline

Now, we define two modes of collective behavior that we study in the following sections.
\begin{definition}\label{D2.3}
Let $\mathbf{R} = \{ R_i \}$ be a  global solution to \eqref{B-5}.
\begin{enumerate}
\item $\mathbf{R}$ relaxes to equilibrium if there 
exist $R_i^\infty \in SO(n)$ such that
\[
\lim_{t \to \infty} R_i(t) = R_i^\infty 
\quad \text{and} \quad 
\lim_{t \to \infty} \dot{R}_i(t) = 0, 
\quad \forall\, i \in [N].
\]
\item $\mathbf{R}$ exhibits complete phase 
synchronization if
\[
\lim_{t \to \infty} \|R_i(t) - R_j(t)\| = 0, 
\quad \forall\, i, j \in [N].
\]
\end{enumerate}
\end{definition}
Below, we show that the Cauchy problem \eqref{B-5} is invariant under right multiplication of all 
oscillators by a fixed element of $SO(n)$, which allows us to choose the 
attraction point at $I_n$ without loss of generality. 
For the fixed attraction point $Q$, let $\tilde Q \in SO(n)$ be any 
other attraction point. Then, we set 
\[
\tilde\eta(\cdot\, Q^T\tilde Q) := \eta(\cdot), 
\qquad 
\tilde I(\cdot\, Q^T\tilde Q) := I(\cdot).
\]
\begin{lemma}[Rotational invariance]\label{L2.2}
Let $\{R_i\}_{i=1}^N$ be a global solution of \eqref{B-5} with the attraction 
point $Q$ and system functions $(\eta, I)$. Then, for any other attraction point $\tilde Q \in SO(n)$, the state $\{ \tilde R_i(t) := R_i(t)Q^T\tilde Q \}$ solves \eqref{B-5} with attraction point $\tilde Q$:
\begin{equation*}
\begin{cases}
\displaystyle \dot{{\tilde R}}_i {\tilde R}_i^T = \Omega_i 
+ \Big(\frac{\kappa}{2N} {\tilde \eta}({\tilde R}_i)\sum_{j=1}^{N}{\tilde I}({\tilde R}_j)\Big)
({\tilde Q} {\tilde R}_i^T - {\tilde R}_i {\tilde Q}^T), \quad t > 0, \\[6pt]
{\tilde R}_i \Big|_{t = 0} = R_i^0 Q^T\tilde Q \in SO(n), \quad \Omega_i \in \mathfrak{so}(n), \quad ~~ i \in [N].
\end{cases}
\end{equation*}
\end{lemma}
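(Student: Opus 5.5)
The argument is a direct substitution: I will check each side of \eqref{B-5} under the change of variables $\tilde R_i = R_i P$ with $P := Q^T\tilde Q \in SO(n)$.

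\textbf{Group membership and initial data.} $P$ is a product of elements of $SO(n)$, so $\tilde R_i(t) \in SO(n)$ for all $t \ge 0$. The initial condition is immediate: $\tilde R_i(0) = R_i^0 Q^T\tilde Q$.

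\textbf{Left-hand side.} Since $P$ is constant, $\dot{\tilde R}_i = \dot R_i P$. Using $PP^T = I_n$,
\[
\dot{\tilde R}_i \tilde R_i^T = \dot R_i P P^T R_i^T = \dot R_i R_i^T .
\]
So the left-hand side of the new system equals the right-hand side of \eqref{B-5} for $\{R_i\}$. It remains to rewrite each ingredient of that right-hand side in the tilde variables.

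\textbf{Attraction term.} Compute $\tilde Q P^T = \tilde Q \tilde Q^T Q = Q$. Hence
\[
\tilde Q \tilde R_i^T = \tilde Q P^T R_i^T = Q R_i^T .
\]
Transposing gives $\tilde R_i \tilde Q^T = R_i Q^T$. Therefore
\[
\tilde Q\tilde R_i^T - \tilde R_i\tilde Q^T = QR_i^T - R_iQ^T .
\]

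\textbf{Sensitivity and influence.} By the definitions of $\tilde\eta$ and $\tilde I$ preceding the lemma, evaluating at $X = R_i$ and $X = R_j$ gives
\[
\tilde\eta(\tilde R_i) = \tilde\eta(R_i Q^T\tilde Q) = \eta(R_i), \qquad \tilde I(\tilde R_j) = \eta\text{-analogue } I(R_j).
\]
Concretely, $\tilde I(\tilde R_j) = \tilde I(R_j Q^T\tilde Q) = I(R_j)$. These definitions make sense because $X \mapsto XQ^T\tilde Q$ is a bijection of $SO(n)$.

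\textbf{Conclusion.} Substituting these three identities into the right-hand side of \eqref{B-5}, and keeping $\Omega_i$ unchanged, yields exactly the stated system for $\{\tilde R_i\}$. Global existence of $\{\tilde R_i\}$ is inherited from that of $\{R_i\}$.

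There is no real obstacle; the only point needing care is using the right multiplication consistently, so that the identity $\tilde Q P^T = Q$ comes out correctly.
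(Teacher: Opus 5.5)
Your proposal is correct and is the same direct-substitution argument as the paper, which simply says to plug $\tilde R_i = R_iQ^T\tilde Q$ into \eqref{B-5}; you have written out the three identities the paper leaves implicit, namely $\dot{\tilde R}_i\tilde R_i^T=\dot R_iR_i^T$, $\tilde Q\tilde R_i^T-\tilde R_i\tilde Q^T=QR_i^T-R_iQ^T$, and $\tilde\eta(\tilde R_i)=\eta(R_i)$, $\tilde I(\tilde R_j)=I(R_j)$. The only blemish is the stray phrase \emph{$\eta$-analogue} in your display for $\tilde I(\tilde R_j)$, which should be deleted since the next line states the correct identity.
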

\begin{proof}
We substitute $\tilde R_i(t) = R_i(t)Q^T\tilde Q$ directly into \eqref{B-5} with attraction point $\tilde Q$ to get the desired system. 
\end{proof}
\begin{remark} \label{R2.3}
By Lemma~\ref{L2.2}, we may set $Q = I_n$ without loss of 
generality. Following the convention of the Winfree sphere model 
\cite{park2021generalization}, we also set $\eta \equiv 1$. Under 
these normalizations, the Cauchy problem \eqref{B-5} becomes
\begin{equation*}
\begin{cases}
\displaystyle \dot{R}_i = \Omega_i R_i 
+ \frac{\kappa}{2N}\Big(\sum_{j=1}^{N} I(R_j)\Big)(I_n - R_i^2), 
\quad t >0, \\[6pt]
R_i(0) = R_i^0 \in SO(n), \quad \Omega_i \in \mathfrak{so}(n), \quad  i \in [N],
\end{cases}
\end{equation*}
which is the form to be studied in the rest of the paper.
\end{remark}
\subsection{Reduction to the classical Winfree model} \label{sec:2.3}
In this subsection, we show that the normalized model \eqref{A-2} with $n = 2$ 
is equivalent to the classical Winfree model \eqref{A-1} with sensitivity 
function $S(\theta) = -\sin\theta$. To see this, we set
\[ 
R_i(t) := \begin{pmatrix} 
\cos\theta_i(t) & -\sin\theta_i(t) \\ 
\sin\theta_i(t) & \cos\theta_i(t) 
\end{pmatrix} \quad t \geq 0, \qquad \Omega_i := \begin{pmatrix} 
0 & -\nu_i \\ \nu_i & 0 
\end{pmatrix}, \quad i \in [N],
\]
and we substitute these ansatz into \eqref{A-2} and compare the resulting relation to find 
\[
-{\dot \theta}_i \sin \theta_i = -\nu_i \sin \theta_i + \frac{\kappa}{2N}\Big(\sum_{j=1}^{N} {\hat I}(\theta_j)\Big) (2 \sin^2 \theta_i),
\]
\[
-{\dot \theta}_i \cos \theta_i = -\nu_i \cos \theta_i + \frac{\kappa}{2N}\Big(\sum_{j=1}^{N} {\hat I}(\theta_j)\Big) (2 \sin \theta_i\cos\theta_i),
\]
where ${\hat I}(\theta_i) = I(R_i)$. Since at least one of the $\sin\theta_i$ and $\cos\theta_i$ are nonzero, we simplify the equation above into the classical Winfree model with $S(\theta) = -\sin\theta$:
\begin{equation*}
\begin{cases}
\displaystyle \dot\theta_i = \nu_i - \frac{\kappa}{N}
\sum_{j=1}^{N}\hat I(\theta_j)\sin\theta_i, 
\quad t > 0, \\[6pt]
\theta_i(0) = \theta_i^0, \quad i \in [N].
\end{cases}
\end{equation*}

\subsection{Basic properties of $SO(n)$} \label{sec:2.4}
In this subsection, we study basic properties of the special orthogonal group. First, we derive the distance formula using the following metric:
\[g(X, Y) = \langle X, Y \rangle \coloneqq \frac 1 2 \tr(X^T Y), \quad \|X\| := \sqrt{\langle X, X\rangle }, \qquad X, Y\in T_R SO(n).\]
Recall that we use the notation $A\succeq B$ if $A-B$ is positive semidefinite, and $A \sim B$ if $A$ and $B$ are orthogonally similar, i.e., there exists a real orthogonal matrix $P$ such that
\[A = P^TBP.\]
Note that the relation $\sim$ is an equivalence relation. In the following lemma, we list several useful properties for $SO(n)$ to be used throughout the paper.
\begin{lemma}\label{L2.3}
Let $R \in SO(n)$ be given. Then, the following assertions hold. 
\vspace{0.1cm}
    \begin{itemize}
        \item[(1)] The eigenvalue of $R$ has the form $\{e^{\mathrm{i}\theta_1}, e^{-\mathrm{i}\theta_1}, \cdots ,e^{\mathrm{i}\theta_m},  e^{-\mathrm{i}\theta_m}, 1, \cdots, 1\}$ for some integer $m\in [0, n/2]$ and $\theta_j\in (0, \pi]$,  including repetition of eigenvalues. multiplicity of $-1=e^{\mathrm{i}\pi}$ is even.
        \vspace{0.2cm}
        \item[(2)] $R \sim \Lambda$ where $\Lambda$ has a form
        \[\Lambda = \begin{pmatrix}
            {\mathcal R}(\theta_1)   \\
             &  \ddots   \\
              &  &   {\mathcal R}(\theta_m) \\
              &  & & I_{n-2m}
        \end{pmatrix},\]
    where ${\mathcal R}(\theta)$ is the rotation matrix on the plane with angle $\theta$:
    \[ {\mathcal R}(\theta):= \begin{pmatrix}
        \cos\theta & -\sin\theta \\
        \sin\theta & \cos\theta
    \end{pmatrix}.\]
    \vspace{0.2cm}
    \item[(3)] With $\Lambda$ given in the second assertion (2), let $X$ be
    \[X= \begin{pmatrix}
        0 & -\theta_1\\
         \theta_1 & 0 \\
        & &  \ddots \\
        & & & & 0 & -\theta_m \\
        & & & & \theta_m & 0 \\
        & & & & & & O_{n-2m}
    \end{pmatrix},\]
    where $O_{n-2m}$ is $(n-2m)\times (n-2m)$ zero matrix. Then we have
    \[\Lambda = \exp (X).\]
    \item[(4)] For any given anti-symmetric matrix $\Omega$, there exists a integer $k\in[0, n/2]$ and positive constant $\lambda_1,\cdots,\lambda_k$ such that $\Omega$ is orthogonally equivalent to the following block matrix.
    \[\Omega \sim \tilde \Omega = \text{diag}(\lambda_1 J, \lambda _2 J, \cdots, \lambda_k J, O_{n-2k}),\]
    where
    \[J = \begin{pmatrix}
        0 & -1 \\ 1 & 0
    \end{pmatrix}.\]
  \item[(5)] For the orthogonal real matrix $P$ such that $A = P^T\exp (X)P,$  let $\gamma:[0, 1]\rightarrow SO(n)$ be a smooth curve on $SO(n)$ defined as $\gamma(t) = P^T\exp(tX)P.$  Then $\gamma(t)$ is the length-minimizing geodesic connecting 
    \[ \gamma(0)=I_n \quad \mbox{and} \quad \gamma(1)=R. \]
    Moreover, the geodesic distance $d(I_n, R)$, which is the infimum of distance of every curve connecting $I_n$ and $R$, is
    \begin{equation} \label{B-5-1}
    d(I_n, R) = \sqrt{\sum_{i=1}^{m} \theta_i^2},
    \end{equation}
	and for $R, S\in SO(n)$, the geodesic distance between $R$ and $S$ is equal to $d(I_n, R^TS).$
	\[d(R, S) = d(I_n, R^TS).\]    
      \item[(6)] For positive semidefinite matrices $A,\ B,\ C$, if $B\succeq C$, then $\tr(AB)\ge \tr(AC)$.
    \end{itemize}
\end{lemma}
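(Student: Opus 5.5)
Assertions (1), (2) and (4) all come from the real spectral theory of normal matrices, so I would prove them together. A matrix $R\in SO(n)$ is real and normal ($R^TR=RR^T=I_n$), so it is unitarily diagonalizable, and its eigenvalues have modulus one because $\|Rv\|=\|v\|$. Since $R$ is real, its non-real eigenvalues come in conjugate pairs $e^{\pm\mathrm{i}\theta_j}$ with $\theta_j\in(0,\pi)$; the remaining eigenvalues are $\pm1$. The condition $\det R=1$ forces the multiplicity of $-1$ to be even, and I would write each such pair as $e^{\pm\mathrm{i}\pi}$. This gives (1).

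For (2), I would take a unit eigenvector $v=a+\mathrm{i}b$ for the eigenvalue $e^{\mathrm{i}\theta}$. Since $v$ is orthogonal to $\bar v$, the vectors $a$ and $b$ are orthogonal with equal norms, and $\operatorname{span}\{a,b\}$ is $R$-invariant; on it $R$ acts as ${\mathcal R}(\theta)$ in a suitable orientation. Its orthogonal complement is also invariant, because $R^T=R^{-1}$. Inducting on dimension, and pairing the $-1$ eigenvectors into blocks ${\mathcal R}(\pi)$, gives an orthonormal basis $P$ with $R=P^T\Lambda P$. If $\det P=-1$, I would flip one basis vector inside a block, which replaces $\theta$ by $-\theta$ there, and then reorder. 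Assertion (4) follows the same way: $\Omega$ is real normal with purely imaginary eigenvalues $\pm\mathrm{i}\lambda_j$, and the same block argument applies.

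For (3), the matrix $X$ is block diagonal, so $\exp X$ is computed blockwise. For a single block, $J^2=-I_2$ gives $\exp(\theta J)=\cos\theta\, I_2+\sin\theta\, J={\mathcal R}(\theta)$, and $\exp(O_{n-2m})=I_{n-2m}$.

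Assertion (6) is a short trace argument. I would write $A=A^{1/2}A^{1/2}$. Then
\[
\tr(A(B-C))=\tr\big(A^{1/2}(B-C)A^{1/2}\big)\ge0,
\]
since $A^{1/2}(B-C)A^{1/2}$ is positive semidefinite.

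Assertion (5) is the main obstacle. The metric $\langle X,Y\rangle=\frac12\tr(X^TY)$ is bi-invariant, because conjugating or translating by orthogonal matrices preserves the trace. It follows that left and right translations are isometries, so $d(R,S)=d(I_n,R^TS)$. For a bi-invariant metric, geodesics through $I_n$ are the one-parameter subgroups $t\mapsto\exp(tY)$. Hence $\gamma(t)=P^T\exp(tX)P=\exp(tP^TXP)$ is a geodesic, and its length is $\|P^TXP\|=\|X\|=\sqrt{\sum\theta_i^2}$, where the factor $\frac12$ cancels the doubling of each $\theta_i$ in the Frobenius norm. The hard part is minimality. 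Here I would argue that the geodesic $\exp(tY)$ stays minimizing as long as the eigenvalues $\pm\mathrm{i}\theta_i$ of $Y$ satisfy $|\theta_i|\le\pi$. Concretely, every geodesic from $I_n$ to $R$ has the form $\exp(tY)$ with $\exp Y=R$. If $Y$ is conjugated into block form with angles $\phi_i$, then $\exp Y=R$ forces the multiset $\{\phi_i \bmod 2\pi\}$ to match the $\theta_i$ up to sign and added zero or $2\pi$ blocks. This is because $Y$ commutes with $R$ and therefore preserves its eigenspaces, which reduces the problem to the $SO(2)$ and $-I$ cases. Each such candidate has length $\sqrt{\sum\phi_i^2}\ge\sqrt{\sum\theta_i^2}$, since $\theta_i\in(0,\pi]$ is the minimal representative of its angle. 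A minimizing curve exists because $SO(n)$ is compact (Hopf--Rinow), and any minimizer is a geodesic, so this yields \eqref{B-5-1}.
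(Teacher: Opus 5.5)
Your proposal is correct. For (1)--(3) and (6) it is essentially the paper's argument. For (4) you use the spectral theory of real normal matrices where the paper uses the real Schur decomposition; either works.

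For (5) you follow the paper's skeleton: a minimizer exists by Hopf--Rinow, it is a geodesic and hence of the form $t\mapsto\exp(tY)$ with $\exp Y=R$, and one minimizes $\|Y\|$ over such logarithms. Your classification of the logarithms is more careful than the paper's, though. The paper simply asserts that every such $Y$ equals $P^T\tilde X P$ with $\tilde X=X+2\pi\,\mathrm{diag}(n_1J,\dots,n_mJ,O_{n-2m})$ for the same $P$. That form misses logarithms that are not block-diagonal in that basis (repeated angles, the $-1$ eigenspace). It also misses extra $2\pi\mathbb{Z}$ blocks on the fixed space of $R$: for instance $R=I_3$ has the logarithm $2\pi\,\mathrm{diag}(J,0)$ of norm $2\pi$, which the paper's form does not contain. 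Your basis-free comparison of spectra covers all of these cases.

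In fact the eigenvalue comparison alone suffices. $Y$ is normal, so the spectrum of $\exp Y$ is $\{e^{\pm\mathrm{i}\phi_i}\}\cup\{1,\dots,1\}$. Matching this with the spectrum of $R$ (taking $\phi_i>0$) gives $\phi_i\ge\theta_j$ for the matched index $j$, and the unmatched $\phi_i$ lie in $2\pi\mathbb{Z}$. The detour through the commutation of $Y$ with $R$ is unnecessary. Moreover, when an angle is repeated it leads to an eigenspace of dimension $2k>2$, which is not an $SO(2)$ problem without a further simultaneous block diagonalization.

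One small correction in (2): no determinant fix is needed, since $\sim$ only asks for $P\in O(n)$. As written, flipping a basis vector inside a rotation block would turn $\theta$ into $-\theta\notin(0,\pi]$. In general $P$ cannot be taken in $SO(n)$ at all: for $n=2$ and $R={\mathcal R}(-\alpha)$ with $\alpha\in(0,\pi)$, a reflection is forced.
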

\begin{proof}
Although the proofs for the above facts may be found in books on special orthogonal matrix, we provide their proofs in Appendix \ref{Ap-A-1} for readers' convenience.
\end{proof}

\vspace{0.5cm}

Below, we discuss several structural assumptions on the influence function $I$:
\begin{itemize}
\item[(1)] $I(R)$ only depends on the Riemannian distance between $I_n$ and $R$:~there exists a function
$\tilde I : [0, \lfloor \frac n2 \rfloor \pi] \to \mathbb{R}_{\ge 0}$ such that
\[
I(R) = \tilde I\bigl(d(I_n, R)\bigr).
\]
\item[(2)]
    $\tilde I$ is decreasing and satisfies
    \[
    \lim_{r \to 0^+} \tilde I(r) = 1.
    \]
    This reflects the physical assumption that the influence of each oscillator
    becomes stronger as it approaches the attraction point.
    \vspace{0.1cm}
    \item[(3)]
    $\tilde I$ is globally Lipschitz continuous with compact support:
    \begin{equation} \label{B-6}
    \operatorname{supp} \tilde I = \bigl[0, \beta\bigr], \quad \mbox{  for some $0 < \beta < \frac{\pi}{2}$.}
    \end{equation}
\end{itemize}

\vspace{0.5cm}

Finally, we discuss the existence and uniqueness of the global solution to system $\eqref{A-2}$.

\begin{lemma} For given $R_i^0\in SO(n)$, there exist a unique solution $\{R_i(t)\}$ to \eqref{A-2} with initial data $\{R_i^0\}$, which satisfies
\[R_i(t) \in SO(n), \quad \forall\, t\ge0.\]
	
\end{lemma}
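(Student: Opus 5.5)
We prove the lemma by treating \eqref{A-2} as an ODE system on the Euclidean space $(\bbr^{n\times n})^N$, obtaining a local solution by Cauchy--Lipschitz, and then showing that the closed set $SO(n)^N$ is invariant. Invariance gives boundedness, and boundedness rules out blow-up.

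\textbf{Extension of the vector field.} We first need a globally Lipschitz vector field on all of $(\bbr^{n\times n})^N$. The linear term $\Omega_i R_i$ and the quadratic term $I_n - R_i^2$ are smooth. The function $I(R)=\tilde I(d(I_n,R))$ is only defined on $SO(n)$, so we extend it as follows.

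Let $\pi$ denote a smooth retraction of a tubular neighborhood of $SO(n)$ onto $SO(n)$, for instance the polar factor $\pi(A)=A(A^TA)^{-1/2}$ near $SO(n)$. Multiply by a smooth cutoff $\chi$ that equals $1$ on a smaller neighborhood and vanishes outside the tubular one, and set $\hat I(A)=\chi(A)\,I(\pi(A))$. It remains to check that $I$ is Lipschitz on $SO(n)$.

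\begin{itemize}
\item $\tilde I$ is Lipschitz by \eqref{B-6}.
\item $d(I_n,\cdot)$ is $1$-Lipschitz with respect to the geodesic distance, by the triangle inequality and Lemma \ref{L2.3}(5).
\item On a compact manifold, geodesic and extrinsic (Frobenius) distances are comparable.
\end{itemize}

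Together these show $I$ is Lipschitz on $SO(n)$, and hence $\hat I$ is Lipschitz on $\bbr^{n\times n}$. Finally, multiply the whole right-hand side by a cutoff in $\|R_i\|$ that equals $1$ on a ball containing $SO(n)$. The resulting vector field is globally Lipschitz, so Cauchy--Lipschitz yields a unique global solution of the modified system.

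\textbf{Invariance of $SO(n)$.} This is the key step. Write the modified system as $\dot R_i = F_i R_i$ with
\[
F_i=\Omega_i+\frac{\kappa}{2N}\Big(\sum_j\hat I(R_j)\Big)\bigl(R_i^{T}-R_i\bigr).
\]
This uses $(I_n-R_i^2)=(R_i^T-R_i)R_i$, which is only valid when $R_i^TR_i=I_n$. To avoid that issue, we modify the extension at the outset: replace the term $I_n-R_i^2$ by $(R_i^T-R_i)R_i$, which is still smooth and coincides with the original term on $SO(n)$.

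With this choice, $F_i$ is antisymmetric for every input. Therefore
\[
\frac{d}{dt}(R_i^TR_i)=R_i^T(F_i^T+F_i)R_i=0,
\]
so $R_i^TR_i\equiv I_n$. Continuity of the determinant on the orthogonal group, whose values lie in $\{\pm1\}$, then keeps $\det R_i=1$. Hence $R_i(t)\in SO(n)$ for all $t\ge0$. There the cutoffs equal $1$ and $\hat I=I$, so $\{R_i\}$ solves \eqref{A-2}.

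\textbf{Uniqueness.} Any solution of \eqref{A-2} that stays in $SO(n)$ also solves the modified Lipschitz system, so it coincides with the one constructed above.

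\textbf{Main obstacle.} The delicate part is the Lipschitz regularity of $I$ on $SO(n)$, through the distance function, together with its extension off the manifold. The comparability of intrinsic and extrinsic metrics on the compact manifold $SO(n)$ is what resolves it. Alternatively, one can work intrinsically on $SO(n)$, using its compactness and the fact that the right-hand side lies in $T_{R_i}SO(n)$.
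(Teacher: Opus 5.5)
Your argument is correct, but it takes a different route from the paper.

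\textbf{The paper's route.} The paper stays on $SO(n)^N$ throughout. It treats invariance of $SO(n)^N$ as automatic ``by construction'', since the velocity lies in $T_{R_i}SO(n)$. It then estimates the Lipschitz constant of the right-hand side on $SO(n)^N$ in the norm \eqref{A-3}, using $\|AB\|\le\sqrt2\|A\|\|B\|$, and bounds $\|F\|$ uniformly to rule out blow-up. Finally it controls $\mathrm{Lip}^*I$ through the explicit identity $\|X-Y\|^2=\|I_n-X^TY\|^2=\sum_i 4\sin^2(\theta_i/2)$, obtaining $\mathrm{Lip}^*I\le\frac{\beta}{\sin\beta}\,\mathrm{Lip}\,\tilde I$.

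\textbf{Your route.} You extend to the ambient space and prove invariance explicitly. Rewriting $I_n-R_i^2=(R_i^T-R_i)R_i$ puts the system in the form $\dot R_i=F_iR_i$ with $F_i\in\mathfrak{so}(n)$ for every input. The scalar cutoff you multiply by does not affect this, so $R_i^TR_i$ is conserved.

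\textbf{What each buys.} Your construction makes concrete what the paper leaves to a manifold version of Cauchy--Lipschitz. Since $I$ is defined only on $SO(n)$, some extension or chart argument is needed before the Euclidean theorem applies. You supply that extension, and you turn ``tangency'' into a one-line conservation law. The paper's route, in turn, produces explicit constants. This matters because $\mathrm{Lip}^*I$ is used quantitatively later, in the rate $\lambda_1$ of Theorem~\ref{T3.1}. Your appeal to abstract intrinsic/extrinsic comparability gives no constant.

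\textbf{Getting both.} Use the same identity. Every $\theta_i\in(0,\pi]$, so $\sin(\theta_i/2)\ge\theta_i/\pi$. Hence $d(X,Y)\le\frac{\pi}{2}\|X-Y\|$ for all $X,Y\in SO(n)$, and therefore $\mathrm{Lip}^*I\le\frac{\pi}{2}\,\mathrm{Lip}\,\tilde I$. This global bound also covers pairs with only one point in $B_\beta$. The paper's reduction of the supremum in \eqref{B-9} to $X,Y\in B_\beta$ does not address those pairs.
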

\begin{proof}
Note that by construction of dynamics, we have that the solution $\mathbf{R}(t)= \{ R_i(t) \}$ to~\eqref{A-2} is on $SO(n)^N$ until the life span.
For the local existence, we show Lipschitz continuity of right-hand-side of $\eqref{A-2}_1$ with respect to our norm defined at \eqref{A-3}. 

We rewrite the system $\eqref{A-2}$ as a dynamics of $\mathbf{R} = (R_1, \dots, R_N)$:
\[\dot{\mathbf{R}} = F(\mathbf{R})\]
for $F:(SO(n))^N \rightarrow (\bbr^{n\times n})^N.$
Then we have
\begin{align*}
\|F(\mathbf{R})-F(\mathbf{S})\|_{\ell^1 (\bbr^{n\times n})} &=\sum_{i=1}^N\left\|\Omega_i(R_i-S_i)+\frac{\kappa}{2N}\sum_{j=1}^N \big[I(R_j)(I_n-R_i^2)-I(S_j)(I_n-S_i^2)\big]\right\|\\
&\le \sum_{i=1}^N\underbrace{\|\Omega_i (R_i-S_i)\|}_{\eqqcolon \mathcal{I}_{11}}+\frac{\kappa}{2N}\sum_{i,j=1}^N \underbrace{\big\|I(R_j)(I_n-R_i^2)-I(S_j)(I_n-S_i^2)\big\|}_{\eqqcolon \mathcal{I}_{12}}
\end{align*}
From the basic properties of Frobenius norm, we have
\begin{equation}\label{B-7}
\begin{aligned}
&\|AB\|_F \le \|A\|_F\|B\|_F \quad \Longrightarrow \quad \|AB\|\le \sqrt2 \|A\|\|B\|, \quad A, B\in \bbr^{n\times n}.
\\
&\|RA\| = \|AR\| = \|A\|, \quad\|R\| = \|I_n\|= \sqrt{\frac n2}, \quad \forall\, R\in SO(n),\ A\in \bbr^{n\times n}.
\end{aligned}	
\end{equation}
Then, $\eqref{B-7}_1$ yields
\[\mathcal{I}_{11} \le \sqrt2 \|\Omega_i\|\|R_i-S_i\|.\]
For $\mathcal{I}_{12}$, we have 
\begin{align*}
&\quad\big\|I(R_j)(I_n-R_i^2)-I(S_j)(I_n-S_i^2)\big\| \\
&\hspace{1.5cm} \le\big\|(I(R_j)-I(S_j))(I_n-R_i^2)\big\|+\big\|I(S_j)(R_i^2-S_i^2)\big\|\\
&\hspace{1.5cm} \le \text{Lip}^*I\|R_j-S_j\| (\|I_n\| + \|R_i^2\|) \\
&\hspace{2cm}+ |I(S_j)||(\|R_i(R_i-S_i)\| + \|(R_i - S_i)S_i\|) \\
&\hspace{1.5cm} \le (\sqrt{2n}\, \text{Lip*}I + 2)\|\mathbf{R}-\mathbf{S}\|_{\ell^1(\bbr^{n\times n})},
\end{align*}
where Lip$^*I$ is a global Lipschitz constant of $I$.\[
\text{Lip*} I := \sup_{\substack{X,\, Y \in SO(n),\\ \, X \neq Y}} \frac{|I(X) - I(Y)|}{\|X - Y\|} .
\]
Combining above two inequalities give Lipschitz bound of $F$, thus by the standard Cauchy-Lipschitz theory in a finite dimensional manifold $(SO(n), \|\cdot\|)$, a local existence and uniqueness of the solution is proved.
To show global existence, one needs to control the magnitude of $F$.
This is straightforward:
\[
\|F(\mathbf{R})\|_{\ell^1(\bbr^{n\times n})}\le \sum_{i=1}^{N}\left(\|\Omega_i R_i\|+\Big(\frac{\kappa}{2N}
\sum_{j=1}^{N} |I(R_j)|\Big)(\|I_n\| +\| R_i^2\|)\right)\le 
\sum_{i=1}^{N}\sqrt{\frac n2}( \|\Omega_i\|+\kappa) 
\]

Now the only step left is to show that Lip$^*I<\infty$ provided that $\tilde I$ is Lipschitz. Let $B_r$ and $\bar B_r$ be the open ball of radius $r$ on $SO(n)$ and its closure:
\[B_r \coloneqq \{R : d(I_n, R)<r\},\quad \overline{B_r} \coloneqq \{R : d(I_n, R)\le r\}.\]
Then by the definition of Lip$^*I$,
\begin{align}
\begin{aligned} \label{B-9}
    \text{Lip*} I &=\sup_{\substack{X,\, Y \in SO(n),\\ \, X \neq Y}} \frac{|I(X) - I(Y)|}{\|X - Y\|} = \sup_{\substack{X,\, Y \in B_\beta,\\ \, X \neq Y}} \frac{|I(X) - I(Y)|}{\|X - Y\|} \\
    &= \sup_{\substack{X,\, Y \in B_\beta,\\ \, X \neq Y}} \left(\frac{|\tilde{I}(d(I_n, X)) - \tilde{I}(d(I_n, Y))|}{|d(I_n, X) - d(I_n, Y)|} \cdot \frac{|d(I_n, X) - d(I_n, Y)|}{\|X - Y\|}\right),
 \end{aligned}
\end{align}
where we have used the fact that $\tilde I$ has a support $[0,\beta]$.
Next, we estimate the second factor in \eqref{B-9}. We use (5) of Lemma~\ref{L2.3} to see
\begin{equation}\label{B-10}
\frac{|d(I_n, X) - d(I_n, Y)|}{\|X - Y\|}\le \frac{d(X, Y)}{\|X - Y\|}=
\frac{d(I_n, X^T Y)}{\|I_n - X^T Y\|}.
\end{equation}
Let $\{e^{\mathrm{i}\theta_1}, e^{-\mathrm{i}\theta_1}, \cdots, e^{\mathrm{i}\theta_m}, e^{-\mathrm{i}\theta_m}, 1, \cdots, 1\}$ be the eigenvalues of $X^T Y$, which is defined in (2) of Lemma \ref{L2.3}. 
Then since $X, Y\in B_\beta$, $d(X, Y)<2\beta$, we have
\begin{equation}\label{B-11}
d(X, Y) = d(I_n, X^T Y) = \sqrt{\sum_{i=1}^{m}\theta_i^2}, \quad   0<\theta_i/2\le\beta.
\end{equation}
Now by (1) of Lemma \ref{L2.3}, we have
\begin{equation}\label{B-12}
\begin{aligned}
	\|I_n - X^{-1}Y\|^2 &= \frac12 \tr \Big((I_n-Y^T X)(I_n -X^T Y)\Big) \\
	&=\frac12 \tr(I_n-Y^T X-X^TY + Y^T X X^T Y) 
	\\
    &=\tr (I_n - X^TY)
	=\sum_{i=1}^{m}(2-2\cos\theta_i) =\sum_{i=1}^{m}4\sin^2 \frac{\theta_i}{2}.
\end{aligned}
\end{equation}
We plug $\eqref{B-11}$ and $\eqref{B-12}$ into \eqref{B-10} to obtain
\begin{equation}\notag\label{B-13}
\frac{|d(I_n, X) - d(I_n, Y)|}{\|X - Y\|}\le \frac{|d(X, Y)|}{\|X - Y\|}=
\frac{|d(I_n, X^{-1}Y)|}{\|I_n - X^{-1}Y\|}=\frac{\sqrt{\sum_{i=1}^m\theta_i^2}}{2\sqrt{\sum_{i=1}^m \sin^2\frac{\theta_i}{2}}}\le \frac{\beta}{\sin\beta}.
\end{equation}
Note that, in the last inequality, we used the fact that $0\le \theta_i/2\le \beta$ and 
\[
0\le x\le \alpha <\pi\implies \frac{\sin \alpha}{\alpha} x\le \sin x\le x.
\]
Therefore, we have
\[
\text{Lip*} I\le \frac{\beta}{\sin\beta}\sup_{\substack{X,\, Y \in SO(n),\\ \, X \neq Y}}  \frac{|\tilde{I}(d(I_n, X)) - \tilde{I}(d(I_n, Y))|}{|d(I_n, X) - d(I_n, Y)|} \le \frac{\beta}{\sin\beta}\text{Lip}\tilde{I}.
\]
\end{proof}

\section{Uniform-in-time stability}\label{sec:3}
\setcounter{equation}{0}
In this section, we study the emergent dynamics of the generalized Winfree model \eqref{A-2} on $SO(n)$. Thanks to Lemma \ref{L2.2}, we may set the attraction $Q$ to be $I_n$.
\subsection{Existence of a trapping region} \label{sec:3.1}
The main hurdle for the analysis of this model is that we can not directly compute the dynamics of the distance function $d(I_n, R)$. We instead show that the quantity $n-\tr(R)$ is equivalent with $d(I_n, R)^2$, and then we estimate $n-\tr(R)$. 
\begin{lemma} \label{L3.1}
Let $\mathbf R=\{R_i\}_{i=1}^N$ be a global solution to \eqref{A-2} satisfying a priori condition:
\[  \sup_{0 \leq t < \infty} d(I_n, R_i(t)) \leq 2 \pi, \quad \forall~i \in [N]. \]
Then, the following relations hold:~For $i \in [N]$ and $t \geq 0$, 
\begin{equation} \label{D-1}
4\sin^2 \Big( \frac{d(I_n, R_i(t))}{2}  \Big) \le n-\tr(R_i(t)) \le d(I_n, R_i(t))^2.
\end{equation}
\end{lemma}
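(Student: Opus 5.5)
The estimate is pointwise in time: it is a statement about a single matrix $R\in SO(n)$, namely $R = R_i(t)$. So the plan is to prove, for arbitrary $R\in SO(n)$,
\[
4\sin^2\Big(\frac{d(I_n,R)}{2}\Big) \le n-\tr(R) \le d(I_n,R)^2 ,
\]
and then specialize. The a priori bound $d\le 2\pi$ only serves to keep $d/2$ in a range where the sine estimates are meaningful.

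\textbf{Step 1 (spectral form).} By Lemma~\ref{L2.3}(1)--(2), $R\sim\Lambda$ with rotation angles $\theta_1,\dots,\theta_m\in(0,\pi]$. Since the trace is invariant under orthogonal similarity,
\[
n-\tr(R)=\sum_{k=1}^m (2-2\cos\theta_k)=\sum_{k=1}^m 4\sin^2\frac{\theta_k}{2},
\]
exactly as in \eqref{B-12}. By Lemma~\ref{L2.3}(5), $d(I_n,R)^2=\sum_k\theta_k^2=:D^2$.

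\textbf{Step 2 (upper bound).} Using $\sin x\le x$ for $x\ge0$, each term satisfies $4\sin^2(\theta_k/2)\le\theta_k^2$. Summing over $k$ gives $n-\tr(R)\le D^2$.

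\textbf{Step 3 (lower bound).} This is the main point. Set $x_k=\theta_k/2\in(0,\pi/2]$ and $f(s)=\sin^2\sqrt{s}$ on $s\in[0,\pi^2/4]$. We want $\sum_k f(x_k^2)\ge f\big(\sum_k x_k^2\big)$, which is subadditivity of $f$ along the partial sums.

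First, $f$ is concave on $[0,(\pi/2)^2]$. Indeed, $f(s)=\tfrac12(1-\cos 2\sqrt s)$, so $f'(s)=\dfrac{\sin 2\sqrt s}{2\sqrt s}$. The function $u\mapsto \sin u/u$ is decreasing on $[0,\pi]$, so $f'$ is decreasing for $2\sqrt s\le\pi$. Together with $f(0)=0$, concavity gives subadditivity, $f(a+b)\le f(a)+f(b)$, provided $a+b$ stays in the concavity range. Inducting over $k$ then yields the lower bound.

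The obstacle is that $\sum_k x_k^2=D^2/4$ may exceed $\pi^2/4$, since each $\theta_k$ can be up to $\pi$. I would handle this as follows.

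\emph{Case $D\le\pi$.} Everything stays in the concavity range, and the argument above applies directly.

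\emph{Case $\pi< D\le 2\pi$.} Here $D/2\in(\pi/2,\pi]$, so $\sin^2(D/2)\le 1$. It then suffices that $\sum_k\sin^2(\theta_k/2)\ge 1$. I would try to argue this via the largest angle: with $\theta_1=\max_k\theta_k$, use $\sin^2(\theta_1/2)\ge\sin^2$ of something comparable to $D/2$. If that fails, fall back on extending $f$ by its monotone decreasing-in-$D/2$ envelope. I expect this case to require the most care.

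Substituting $R=R_i(t)$ for each $i$ and $t\ge 0$ then yields \eqref{D-1}.
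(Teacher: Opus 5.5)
\textbf{Gap: the case $\pi<D\le 2\pi$ is not proved.} Your Steps 1--2 and your Case $D\le\pi$ are correct and match the paper's setup: the spectral form $n-\tr(R)=\sum_k 4\sin^2(\theta_k/2)$, $d(I_n,R)^2=\sum_k\theta_k^2$, and $\sin x\le x$ for the upper bound. In Case 2, however, you only sketch ideas, and the route you propose cannot work. The largest angle alone does not control the sum. For instance, take $m=2$ and $\theta_1=\theta_2=3\pi/4$. Then $D=3\sqrt2\,\pi/4\in(\pi,2\pi)$, yet $\sin^2(\theta_1/2)=\frac{2+\sqrt2}{4}\approx 0.854$. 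This is below $1$, and even below $\sin^2(D/2)\approx 0.99$, so every term of the sum is needed. The \emph{monotone envelope} fallback is not an argument as stated.

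\textbf{The missing idea.} Concavity of $f(s)=\sin^2\sqrt s$ is more than you need. Subadditivity with $f(0)=0$ already follows once $s\mapsto f(s)/s=(\sin\sqrt s/\sqrt s)^2$ is nonincreasing, because then $f(a)+f(b)=a\frac{f(a)}{a}+b\frac{f(b)}{b}\ge (a+b)\frac{f(a+b)}{a+b}=f(a+b)$. This monotonicity holds on the whole range $\sqrt s\le\pi$, since $\sin x/x$ is nonincreasing and nonnegative on $(0,\pi]$.

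\textbf{How the paper does it.} The paper uses exactly this, termwise and with no case split. Since $0<\theta_k/2\le D/2\le\pi$, which is the only place $D\le 2\pi$ is used, one has $\sin^2(\theta_k/2)\ge \frac{\sin^2(D/2)}{(D/2)^2}\,(\theta_k/2)^2$ for each $k$. Multiplying by $4$ and summing gives $n-\tr(R)\ge 4\sin^2(D/2)$.

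\textbf{Alternative patch.} You can also close your Case 2 directly with Jordan's inequality $\sin x\ge 2x/\pi$ on $[0,\pi/2]$. Since $\theta_k/2\in(0,\pi/2]$, it gives $\sum_k\sin^2(\theta_k/2)\ge D^2/\pi^2\ge 1\ge \sin^2(D/2)$ whenever $D\ge\pi$. This route does not even need $D\le 2\pi$.
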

\begin{proof}
For each $R_i \in SO(n)$, (1), (2) and (5) in Lemma \ref{L2.3} yield that the eigenvalue of $R_i$ has the form:
\[ \{e^{\mathrm{i}\theta_{i1}}, e^{-\mathrm{i}\theta_{i1}}, \cdots ,e^{\mathrm{i}\theta_{im_i}},  e^{-\mathrm{i}\theta_{im_i}}, 1, \cdots, 1\} \]
for some integer $m_i \in [0, n/2]$ and $\theta_{ij} \in (0, \pi]$ and 
\begin{equation} \label{D-1-0-0}
\tr(R_i) = n - 2m_i + 2 \sum_{j=1}^{m_i} \cos \theta_{ij} \quad \mbox{and} \quad     d(I_n, R_i)^2 = \sum_{j=1}^{m_i} \theta_{ij}^2. 
\end{equation}
These imply
\begin{align}
\begin{aligned} \label{D-1-0}
n-\tr(R_i) &= n-\Big(n-2m_i + \sum_{j=1}^{m_i}2\cos\theta_{ij}\Big) = 2 \sum_{j=1}^{m_i} (1- \cos \theta_{ij})  \\
&= \sum_{j=1}^{m_i} 4 \sin^2 \Big(\frac{\theta_{ij}}{2}\Big).
\end{aligned}
\end{align}
On the other hand, we recall basic trigonometric inequalities:
\begin{equation} \label{D-1-0-1}
0 <\theta < \alpha \le \pi \quad \Longrightarrow\quad \frac{\sin^2\alpha}{\alpha^2} \theta^2 < \sin^2\theta < \theta^2 < \alpha^2.
\end{equation}
Since 
\[ \frac{\theta_{ij}}{2} \le \frac{d(I_n, R_i)}{2} \le \pi, \quad j \in [m_i], \]
we use \eqref{D-1-0-1} to find 
\begin{equation} \label{D-1-0-2}
4 \frac{\sin^2 \Big( \frac{d(I_n, R_i)}{2}\Big)}{d(I_n, R_i)^2} \theta_{ij}^2 \leq  4 \sin^2 \Big( \frac{\theta_{ij}}{2} \Big)
 \leq \theta_{ij}^2.
\end{equation}
We sum up the relations \eqref{D-1-0-2} over all $j \in [m_i]$ and use $\eqref{D-1-0-0}_2$ to find 
\[
4 \frac{\sin^2 \Big( \frac{d(I_n, R_i)}{2}\Big)}{d(I_n, R_i)^2}  \sum_{j =1}^{m_i} \theta_{ij}^2 \leq  \sum_{j=1}^{m_i} 4 \sin^2 \Big( \frac{\theta_{ij}}{2} \Big)
 \leq \sum_{j = 1}^{m_i} \theta_{ij}^2.
\]
Now, we use  $\eqref{D-1-0-0}_2$ and \eqref{D-1-0} to get the desired relations:
\[
4 \sin^2 \Big( \frac{d(I_n, R_i)}{2}\Big) \leq n-\tr(R_i) \leq d(I_n, R_i)^2.
\]
\end{proof}
\begin{remark} Below, we provide two comments on Lemma \ref{L3.1}.
\begin{enumerate}
\item
The estimate \eqref{D-1} can be rewritten as follows.
\[ 2\sin \Big( \frac{d(I_n, R_i(t))}{2}  \Big) \le \sqrt{n-\tr(R_i(t))} \le d(I_n, R_i(t)), \]
or equivalently
\begin{equation} \label{D-1-0-3}
\sqrt{n-\tr(R_i(t))} \leq d(I_n, R_i) \leq 2 \arcsin \Big(\frac{\sqrt{n-\tr(R_i(t))}}{2} \Big).
\end{equation}
\item
One might wonder why we need an equivalence relation between $d(I_d, R_i)$ and $\sqrt{n -\tr(R_i)}$. By \eqref{B-5-1}, if we estimate the time-evolution of $d(I_n, R_i)$ directly, i.e.,
\[ \frac{d}{dt} d(I_n, R_i) = \frac{\sum_{j=1}^{m_i} \theta_{ij} {\dot \theta}_{ij}}{\sqrt{\sum_{j=1}^{m_i} \theta_{ij}^2}}. \]
and one can proceed using the evolution of $\theta_{ij}$ as is done in our companion paper \cite{ha2026convergence}. We instead use the dynamics for $n - \tr(R_i)$ and then using the equivalence relation \ref{D-1}, we can derive an estimate for $d(I_n, R_i)$ as can be seen in what follows.
\end{enumerate}
\end{remark}

Now, we introduce a sufficient framework $({\mathcal F}_A)$ in terms of initial data and free and system parameters:
 \vspace{0.1cm}
 \begin{itemize}
 \item
 $({\mathcal F}_A1)$:~Parameters $\beta, \gamma_0$ and $\gamma$ satisfy 
 \[
0 <  \beta  < \frac{\pi}2, \quad  0 < \gamma_0 < 2\sin \Big (\frac{\beta}{2} \Big), \quad  \gamma = 2\arcsin \Big (\frac{\gamma_0}{2} \Big ) < \beta.
 \]
 Recall that $\beta$ is the upper bound of the support of $\tilde I$ (see \eqref{B-6}).
 \vspace{0.2cm}
 \item
 $({\mathcal F}_A2)$:~Coupling strength is sufficiently large such that 
 \[  \kappa  > \frac{\max_{j \in [N]} \|\Omega_j\|}{\tilde{I}(\gamma)\sin \Big(2\sin \frac{\gamma}{2} \Big )}. \]
 \item
  $({\mathcal F}_A3)$:~Initial data is confined in a $(B_{\gamma_0})^N$:
  \[
   R_i^0 \in B_{\gamma_0}, \quad \forall~i\in [N].
  \]
  \end{itemize}   
 Next, we show that under the framework $(\mathcal F_A)$, every trajectory starting from $B_{\gamma_0}$ is confined in $\overline{ B_\gamma}$ for suitable $\gamma$. 
\begin{proposition}\label{P3.1}
Suppose that initial data and system parameters satisfy the framework $({\mathcal F}_A)$, and let $\mathbf R=\{R_i\}_{i=1}^N$ be a global solution to \eqref{A-2}.  Then we have 
\[ R_i(t)\in \overline{B_\gamma} \quad \mbox{for all $t\ge 0$}. \]
\end{proposition}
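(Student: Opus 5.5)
The plan is a barrier (first-exit) argument. Instead of the distance itself I will track the Lyapunov-type quantity $\mathcal L_i(t) := n - \tr(R_i(t))$, which is comparable to $d(I_n,R_i)^2$ by Lemma \ref{L3.1}.

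\textbf{Setting up the level set.} Set $c := \gamma_0^2 = 4\sin^2(\gamma/2)$. By $({\mathcal F}_A3)$ and the upper bound in \eqref{D-1}, $\mathcal L_i(0) \le d(I_n,R_i^0)^2 < \gamma_0^2 = c$. Conversely, \eqref{D-1-0-3} shows that $\mathcal L_i \le c$ implies
\[
d(I_n,R_i) \le 2\arcsin\Big(\frac{\gamma_0}{2}\Big) = \gamma .
\]
So it suffices to prove that $\mathcal L_i(t) < c$ for all $t\ge 0$ and all $i \in [N]$.

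\textbf{The first-exit argument.} Suppose this fails. Let $t^*$ be the first time at which some $\mathcal L_i(t^*) = c$. On $[0,t^*]$ every oscillator lies in $\overline{B_\gamma}$ with $\gamma<\beta$. Since $\tilde I$ is decreasing, the mean influence satisfies
\[
\bar I := \frac1N\sum_j I(R_j) \ge \tilde I(\gamma) > 0 \quad \text{on } [0,t^*].
\]
At $t^*$ we must have $\dot{\mathcal L}_i(t^*) \ge 0$, and the goal is to show instead that $\dot{\mathcal L}_i(t^*) < 0$.

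\textbf{Computing the derivative.} Taking the trace of \eqref{A-2} gives
\[
\dot{\mathcal L}_i = -\tr(\Omega_i R_i) - \frac{\kappa}{2}\bar I\,\big(n - \tr(R_i^2)\big).
\]
I will use the eigenangles $\theta_{ij}$ from Lemma \ref{L2.3}.
\begin{itemize}
\item For the coupling term, $n - \tr(R_i^2) = \sum_j 2(1-\cos 2\theta_{ij}) = 4\sum_j \sin^2\theta_{ij}$.
\item For the rotation term, since $\Omega_i$ is antisymmetric, $\tr(\Omega_i R_i) = \tr(\Omega_i A_i)$ with $A_i = (R_i - R_i^T)/2$. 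Cauchy--Schwarz for $\langle\cdot,\cdot\rangle$ gives $|\tr(\Omega_iA_i)| \le 2\|\Omega_i\|\|A_i\|$. The block form of $R_i$ gives $\|A_i\|^2 = \sum_j \sin^2\theta_{ij}$.
\end{itemize}
Writing $s_i := \big(\sum_j \sin^2\theta_{ij}\big)^{1/2}$, this yields
\[
\dot{\mathcal L}_i \le 2 s_i\big(\|\Omega_i\| - \kappa \tilde I(\gamma)\, s_i\big).
\]

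\textbf{Lower bound on $s_i$ at the exit time (main obstacle).} The key step is to bound $s_i$ from below at $t^*$. There $\sum_j 4\sin^2(\theta_{ij}/2) = c = \gamma_0^2$. Each $\theta_{ij} \le d(I_n,R_i) \le \gamma < \pi/2$, so
\[
\sin\theta_{ij} = 2\sin(\theta_{ij}/2)\cos(\theta_{ij}/2) \ge 2\sin(\theta_{ij}/2)\cos(\gamma/2).
\]
Summing the squares gives
\[
s_i^2 \ge \gamma_0^2\cos^2(\gamma/2) = \sin^2\gamma .
\]
Since $\gamma_0 = 2\sin(\gamma/2) < \gamma < \pi/2$, we get $s_i \ge \sin\gamma \ge \sin\big(2\sin\frac{\gamma}{2}\big) > 0$. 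Then $({\mathcal F}_A2)$ gives $\kappa\tilde I(\gamma) s_i > \|\Omega_i\|$, hence $\dot{\mathcal L}_i(t^*) < 0$. This contradicts the choice of $t^*$, so $\mathcal L_i(t) < c$ for all $t \ge 0$, which proves the proposition.

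One point needs a little care: the a priori hypothesis of Lemma \ref{L3.1} must hold. It does, because every geodesic distance on $SO(n)$ is at most $\lfloor n/2 \rfloor^{1/2}\pi$. If that bound exceeds $2\pi$, I will instead invoke \eqref{D-1-0} directly, which holds with no a priori hypothesis, since the bounds above only use the eigenangles.
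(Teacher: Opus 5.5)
Your proof is correct and is essentially the paper's argument: a first-exit (barrier) argument for $n-\tr(R_i)$ at the level $4\sin^2\frac{\gamma}{2}=\gamma_0^2$, with the same trace identity, the same Cauchy--Schwarz bound $-\tr(\Omega_iR_i)\le 2\|\Omega_i\|\big(\sum_j\sin^2\theta_{ij}\big)^{1/2}$, and the same use of $({\mathcal F}_A2)$. The only difference is your exit-time bound $\big(\sum_j\sin^2\theta_{ij}\big)^{1/2}\ge\sin\gamma$ via $\sin\theta=2\sin\frac{\theta}{2}\cos\frac{\theta}{2}$; it is sharper than the paper's $\sin d(I_n,R_i)\ge\sin\big(2\sin\frac{\gamma}{2}\big)$, and would allow $\sin\gamma$ in place of $\sin\big(2\sin\frac{\gamma}{2}\big)$ in $({\mathcal F}_A2)$.
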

\begin{proof} 
We claim that 
\begin{equation}  \label{D-2-0}
n-\tr(R_i) \le 4\sin^2 \frac{\gamma}{2},  \quad \forall\ t\ge 0,\ i\in [N].
\end{equation}
Once we verify \eqref{D-2-0}, we use \eqref{D-1} and \eqref{D-1-0} that 
\begin{equation} \label{D-2-0-0}
4\sin^2 \Big( \frac{d(I_n, R_i)}{2}  \Big) \leq 4\sin^2 \frac{\gamma}{2},  \quad \forall\ t\ge 0,\ i\in [N].
\end{equation}
Since $\frac{\gamma}{2} < \frac{\pi}{2}$, the relation \eqref{D-2-0-0} yields the desired estimate:
\[
d(I_n, R_i(t)) \leq \gamma, \quad \mbox{i.e.,} \quad R_i(t) \in \overline{B_{\gamma}}, \quad t \geq 0,~~i \in [N].
\]
Indeed, $R_i^0$ initially sits in $B_{\gamma_0}\subset B_\gamma$, and since the $\sin$ function is increasing on $[0,\gamma/2]\subseteq[0,\pi/2]$, $R_i$ cannot escape $\overline{B_\gamma}$. \newline

\noindent {\it Proof of \eqref{D-2-0}}:~Suppose the contrary holds, i.e., then there exists the first exit time from the region $B_\gamma$:
\[
t_e \coloneqq \inf\Big\{t \in (0, \infty)\, | \, n-\tr(R_i(t)) > 4\sin^2 \frac{\gamma}{2} \ \text{ for some } i\in[N] \Big\}<\infty.
\]
Now, we set
\[
M(t) := \max_{i\in[N]}(n-\tr(R_i(t))).
\]
Then we have
\begin{equation}\label{D-2}
n-\tr(R_i(t)) \le 4\sin^2 \frac{\gamma}{2} \quad \Longrightarrow \quad d(I_n, R_i) \le \gamma, \quad \forall\ t\in[0, t_e], \ i\in[N].
\end{equation}
Moreover, it follows from the continuity of $n-\tr(R_i)$ and the definition of $t_e$ that 
\begin{equation}\label{D-3}
M(t_e) = 4\sin^2 \frac{\gamma}{2}, \quad D^+ M(t_e)\ge 0,
\end{equation}
where $D^+ M$ is the upper Dini derivative of $M(t)$.  \newline

\noindent We use the equation of $R_i$ gives
\begin{align} \label{D-4}
\begin{aligned}
\frac{d}{dt} (n-\tr(R_i)) &= -\tr (\dot R_i) = -\tr(\Omega_i R_i) - \frac{\kappa}{2N}\sum_{j=1}^{N} I(R_j)(n - \tr(R_i^2))  \\
&=: {\mathcal I}_{11} + {\mathcal I}_{12}.
\end{aligned}
\end{align}
Now, we estimate the terms in the right-hand side of \eqref{D-4} one by one. \newline

\noindent $\bullet$~Case A (Estimate of ${\mathcal I}_{11}$):~Note that
\begin{align}\label{D-5}
\begin{aligned}
{\mathcal I}_{11} &= -\tr(\Omega_i R_i) =-\tr(\Omega_i^T R_i^T) = \tr\left(\Omega_i^T \frac{R_i-R_i^T}{2}\right) = \langle \Omega_i, R_i-R_i^T\rangle \\
&\le  \|\Omega_i\|\cdot\|R_i-R_i^T\| = \|\Omega_i\|\sqrt{\frac12\,\tr\Big((R_i^T - R_i)(R_i - R_i^T)\Big)} \\
&= \|\Omega_i\|\sqrt{n - \tr(R_i^2)} = 2\|\Omega_i\|\sqrt{\sum_{j=1}^{m_i} \sin^2 \theta_{ij}},
\end{aligned}
\end{align}
where we used
\[
\tr(R_i^2) = n-2m_i+2\sum_{j=1}^{m_i} \cos 2\theta_{ij} = n-4\sum_{j=1}^{m_i} \sin^2 \theta_{ij}
\]
for theh last equality.

\vspace{0.2cm}

\noindent $\bullet$~Case B (Estimate of ${\mathcal I}_{12}$):
By above equality, we have
\begin{equation}\label{D-6}
{\mathcal I}_{12} = - \frac{2\kappa}{N}\sum_{j=1}^{N}  \tilde{I}(d(I_n,R_j))  \sum_{j=1}^{m_i} \sin^2 \theta_{ij}.
\end{equation}
Now, we combine \eqref{D-4}, \eqref{D-5}, \eqref{D-6} with the decreasing property of $\tilde I$ to obtain
\begin{equation*}\label{D-7}
\begin{aligned}
\frac{d}{dt} (n-\tr (R_i)) &\le 2\|\Omega_i\|\sqrt{\sum_{j=1}^{m_i} \sin^2 \theta_{ij}} 
-\frac{2\kappa}{N}\sum_{j=1}^{N} \tilde{I}(d(I_n,R_j))\cdot\sum_{j=1}^{m_i} \sin^2 \theta_{ij} \\
& \le 2\sqrt{\sum_{j=1}^{m_i} \sin^2 \theta_{ij}} \Bigg(\|\Omega_i\| - \kappa \tilde{I}(\max_{l\in [N]}d(I_n, R_l)) \sqrt{\sum_{j=1}^{m_i} \sin^2 \theta_{ij}}\Bigg).
\end{aligned}
\end{equation*}
Note that $\max_{j\in [m_i]}|\theta_{ij}|\le d(I_n, R_j)\le \gamma<\frac{\pi}{2}$ implies
\[
\sqrt{\sum_{j=1}^{m_i}\sin^2 \theta_{ij}} \ge \frac{\sin d(I_n, R_i)}{d(I_n, R_i)}\sqrt{\sum_{j=1}^{m_j}\theta_{ij}^2} = \sin d(I_n, R_i).
\]
This yields
\begin{equation}\label{D-8}
\frac{d}{dt} (n-\tr(R_i)) \le 2\sqrt{\sum_{j=1}^{m_i}\sin^2 \theta_{ij}} \Big(\|\Omega_i\| - \kappa \tilde{I}(\max_{l\in [N]}d(I_n, R_l)) \sin d(I_n, R_i)\Big).
\end{equation}
Now, we set $\mathcal{I}(t_e)$ to be the set of maximal indices at time $t_e$:
\[
\mathcal{I}(t_e) \coloneqq \{ i\in[N]\, | \, n-\tr(R_{i}(t_e))=M(t_e)\}.
\]
Then for any $i^*\in \mathcal{I}(t_e)$, we use \eqref{D-1}, \eqref{D-2}, \eqref{D-8} and the condition on $\gamma$ to see
\begin{equation}\label{D-9}
\begin{aligned}
\frac{d}{dt} (n-\tr(R_{i^*})) \Big|_{t =t_e} &\le 2\sqrt{\sum_{j=1}^{m_{i^*}}\sin^2 \theta_{i^*j}} \Big(\|\Omega_{i^*}\| - \kappa \tilde{I}(\max_{l\in [N]}d(I_n, R_l)) \sin d(I_n, R_{i^*})\Big) \\
&\le 2\sqrt{\sum_{j=1}^{m_{i^*}}\sin^2 \theta_{i^*j}} \Big(\|\Omega_{i^*}\| - \kappa \tilde{I}(\gamma) \sin \sqrt{n-\tr(R_{i^*})}\Big) \\
&\le 2\sqrt{\sum_{j=1}^{m_{i^*}}\sin^2 \theta_{i^*j}} \Big(\max_{j\in [N]}\|\Omega_j\| - \kappa \tilde{I}(\gamma)\sin (2\sin \frac{\gamma}{2})\Big) \\
&< 0.
\end{aligned}
\end{equation}
This implies
\[
D^+M(t_e) = \max_{i^*\in \mathcal{I}_t} \frac{d}{dt} (n-\tr(R_{i^*})) < 0.
\]
This is contradictory to \eqref{D-3}. Therefore, we have $t_e=\infty$, and we obtain the desired result.
\end{proof}
\begin{remark} By the condition on $\gamma$ in $({\mathcal F}_A1)$, one has
\[
\frac{\gamma_0}{2}=\sin\frac{\gamma}{2}<\frac{\gamma}{2}.
\]

Thus, we have
\[ B_{\gamma_0} \subset B_{\gamma}. \]
\end{remark}

In the following lemma, we can control a specific trajectory by requiring a larger coupling strength compared to that appearing in $({\mathcal F}_A2)$.
\begin{lemma}\label{L3.2} 
Suppose that initial data and system parameters satisfy $({\mathcal F}_A1)$ and additional conditions:~there exists an index $i_* \in [N]$ such that 
\begin{equation}\notag \label{D-9-0-0}
R_{i_*}^0 \in B_{\gamma_0}, \quad \kappa >  \frac{N \|\Omega_{i_*} \|}{  \tilde{I}(\gamma) \sin(2\sin \frac{\gamma}{2})}, 
\end{equation}
and let $\{R_i(t)\}$ be a solution to \eqref{A-2}. Then we have 
\[  R_{i_*}(t) \in \overline{B_{\gamma}}, \quad \forall~t \geq 0. \]
\end{lemma}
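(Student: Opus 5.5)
Like Proposition \ref{P3.1}, this is a first-exit-time argument applied to $n-\tr(R_{i_*})$ only. The difference is that the other oscillators may lie anywhere in $SO(n)$, so their influence cannot be bounded below. We therefore keep only the self-interaction term $j=i_*$ in the coupling sum; this is where the factor $N$ in the coupling condition comes from.

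First, we claim that
\[
n-\tr(R_{i_*}(t)) \le 4\sin^2\frac{\gamma}{2}, \qquad \forall\, t\ge 0.
\]
By \eqref{D-1} and the monotonicity of $\sin$ on $[0,\pi/2]$, this claim gives $d(I_n,R_{i_*}(t))\le\gamma$, as in the opening step of the proof of Proposition \ref{P3.1}. The initial condition satisfies the claim strictly, since $R_{i_*}^0\in B_{\gamma_0}\subset B_\gamma$ and \eqref{D-1} applies. Suppose the claim fails, and let $t_e<\infty$ be the first time at which $n-\tr(R_{i_*})$ exceeds $4\sin^2(\gamma/2)$. Then at $t=t_e$ we have equality, and $\frac{d}{dt}(n-\tr R_{i_*})\ge 0$. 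On $[0,t_e]$ we also have $d(I_n,R_{i_*})\le\gamma<\beta$.

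Second, we write out $\frac{d}{dt}(n-\tr R_{i_*})$ as in \eqref{D-4}. The term $\mathcal I_{11}$ is bounded exactly as in \eqref{D-5}, by $2\|\Omega_{i_*}\|\sqrt{\sum_j\sin^2\theta_{i_*j}}$. In $\mathcal I_{12}$, every summand $\tilde I(d(I_n,R_j))\ge 0$ and $\sum_j\sin^2\theta_{i_*j}\ge0$, so we may discard all $j\ne i_*$. This yields
\[
\mathcal I_{12}\le -\frac{2\kappa}{N}\,\tilde I\big(d(I_n,R_{i_*})\big)\sum_{j}\sin^2\theta_{i_*j}\le -\frac{2\kappa}{N}\,\tilde I(\gamma)\sum_j\sin^2\theta_{i_*j},
\]
where the last step uses that $\tilde I$ is decreasing and $d(I_n,R_{i_*})\le\gamma$. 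Using $\sqrt{\sum_j\sin^2\theta_{i_*j}}\ge \sin d(I_n,R_{i_*})$, which is valid because $\gamma<\pi/2$, we get
\[
\frac{d}{dt}(n-\tr R_{i_*})\le 2\sqrt{\textstyle\sum_j\sin^2\theta_{i_*j}}\,\Big(\|\Omega_{i_*}\|-\frac{\kappa}{N}\tilde I(\gamma)\sin d(I_n,R_{i_*})\Big).
\]

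Third, we evaluate this at $t_e$. There $\sqrt{n-\tr R_{i_*}}=2\sin(\gamma/2)$, so by \eqref{D-1-0-3} we have $d(I_n,R_{i_*})\ge 2\sin(\gamma/2)$. Since $2\sin(\gamma/2)\le \gamma<\pi/2$, it follows that $\sin d(I_n,R_{i_*})\ge\sin(2\sin\frac{\gamma}{2})$. The assumed bound on $\kappa$ then makes the bracket strictly negative. The prefactor is positive, because at $t_e$ the matrix $R_{i_*}\neq I_n$ and all $\theta_{i_*j}\le\gamma<\pi/2$. Hence the derivative at $t_e$ is strictly negative, which contradicts the defining property of $t_e$. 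Therefore $t_e=\infty$, and the lemma follows.

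The step needing the most care is the positivity of the prefactor and the sign bookkeeping in discarding the other oscillators' terms. We need $\tilde I\ge0$ and $\sum_j\sin^2\theta_{i_*j}\ge0$, and strict positivity of $\sqrt{\sum_j\sin^2\theta_{i_*j}}$ requires that no $\theta_{i_*j}$ equals $\pi$. This is guaranteed since every $\theta_{i_*j}\le d(I_n,R_{i_*})\le\gamma<\pi/2$. No a priori control of the other oscillators is needed.
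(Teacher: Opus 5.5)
Your proposal is correct and follows the paper's own proof: a first-exit argument on $n-\tr(R_{i_*})$ that discards the other oscillators' influence terms and keeps only the self-term $\tilde I(d(I_n,R_{i_*}))\ge\tilde I(\gamma)$, which is where the factor $N$ comes from, together with the bound $d(I_n,R_{i_*}(t_e))\ge 2\sin\frac{\gamma}{2}$ at the exit time. Your explicit check that the prefactor $\sqrt{\sum_j\sin^2\theta_{i_*j}}$ is strictly positive at $t_e$ is a useful sharpening: it makes the derivative strictly negative there, whereas the paper states only the non-strict criterion ``$\le 0$'', which on its own would not rule out an exit.
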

\begin{proof} By the same argument (see \eqref{D-1-0}) as in the proof of Proposition \ref{P3.1}, it suffices to show that if there exists a finite time $t_e \geq 0$ such that 
\[ n-\tr(R_{i_*}) \Big|_{t = t_e} = 4\sin^2 \frac{\gamma}{2}, \]
then we have
\begin{equation} \label{D-9-0} 
\frac{d}{dt}  \Big|_{t = t_e} (n-\tr(R_
{i_*})) \le 0. 
\end{equation}
To see \eqref{D-9-0}, we assume that there exits a finite time $t_e \geq 0$ such that 
\[  n-\tr(R_{i_*}(t_e)) = 4\sin^2 \frac{\gamma}{2}. \]
Then, it follows from \eqref{D-1} that 
\[
4\sin^2 \Big( \frac{d(I_n, R_{i_*}(t_e))}{2}  \Big) \le 4\sin^2 \frac{\gamma}{2} \le d(I_n, R_{i_*}(t_e))^2,
\]
i.e., we have
\[ 2\sin \frac{\gamma}{2} \le d(I_n, R_{i_*}(t_e)) \le \gamma. \]
 Now, we use this and the positivity of $\tilde I$ to see
\[\sum_{j=1}^{N} \tilde I(d(I_n, R_j(t_e))) \ge \tilde I(d(I_n, R_{i_*}(t_e))) \ge \tilde I(\gamma). \] 
If we use the same procedure as in \eqref{D-9}, then we get the desired estimate:
\begin{align}
\begin{aligned}\notag  \label{D-9-1}
\frac{d}{dt} (n-\tr(R_{i_*})) &\le 2\|\Omega_{i_*} \|\sqrt{\sum_{j=1}^{m_{i_*}} \sin^2 \theta_{lj}} 
- \frac{2\kappa}{N}\sum_{j=1}^N \tilde{I}(d(I_n, R_j))\cdot\sum_{j=1}^{m_{i_*}} \sin^2 \theta_{i_* j} \\
&\le 2\sqrt{\sum_{j=1}^{m_{i_*}} \sin^2 \theta_{i_*j}}  \underbrace{\Big(\|\Omega_{i_*}\| - \frac{\kappa}{N} \tilde{I} (\gamma) \sin(2\sin \frac{\gamma}{2})\Big)}_{< 0, \quad \mbox{by assumption on $\kappa$}} \\
&\leq 0.
\end{aligned}
\end{align}
Here all quantities are evaluated at $t = t_e$.  Therefore, we have the desired estimate:
\[  R_{i_*}(t) \in \overline{B_{\gamma}}, \quad \forall~t \geq 0. \]  
\end{proof}
\begin{lemma} \label{L3.3}
Let $x$ be a value in $[0, 1]$. Then, the following trigonometric identity holds.
\[ 2 \cos \Big(\frac{1}{2} \arcsin x \Big) = \sqrt{2 + 2 \sqrt{1-x^2}}. \]
\end{lemma}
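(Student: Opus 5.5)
The plan is to set $\phi := \arcsin x$. Since $x \in [0,1]$, we have $\phi \in [0, \pi/2]$, so $\phi/2 \in [0, \pi/4]$. This gives two facts we will use: $\cos(\phi/2) > 0$, and $\cos\phi = \sqrt{1-\sin^2\phi} = \sqrt{1-x^2} \ge 0$, where the nonnegative square root is the correct branch because $\cos \ge 0$ on $[0,\pi/2]$.

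Next we apply the half-angle (double-angle) identity:
\[
4\cos^2\Big(\frac{\phi}{2}\Big) = 2\big(1+\cos\phi\big) = 2 + 2\sqrt{1-x^2}.
\]
Both $2\cos(\phi/2)$ and $\sqrt{2+2\sqrt{1-x^2}}$ are nonnegative, so taking square roots on both sides yields the claimed identity
\[
2\cos\Big(\frac12 \arcsin x\Big) = \sqrt{2+2\sqrt{1-x^2}}.
\]

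The only point needing care is the choice of sign. This is handled by the range of $\arcsin$: it places $\phi/2$ in $[0,\pi/4]$, where cosine is positive, and it places $\phi$ in $[0,\pi/2]$, which fixes $\cos\phi = +\sqrt{1-x^2}$. Beyond that, the argument is a routine computation.
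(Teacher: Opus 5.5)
Your proof is correct, and it takes a somewhat different and cleaner route than the paper.

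The paper sets $\theta = \frac12\arcsin x$ and $X = 2\cos\theta$. It writes $x = \sin 2\theta = 2\sin\theta\cos\theta = 2\sqrt{1-(X/2)^2}\,(X/2)$, squares to get $x^2 = (4-X^2)X^2/4$, and then solves this biquadratic for $X$. That inversion yields two candidates, $X^2 = 2 \pm 2\sqrt{1-x^2}$, and the paper simply takes the plus sign. The missing justification is that $\theta \in [0,\pi/4]$ forces $X^2 = 4\cos^2\theta \ge 2$. The paper also tacitly uses $\sin\theta \ge 0$ when it writes $\sin\theta = \sqrt{1-\cos^2\theta}$.

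Your forward computation, using $4\cos^2(\phi/2) = 2(1+\cos\phi)$ together with $\cos(\arcsin x) = \sqrt{1-x^2}$, never produces the extraneous root $2 - 2\sqrt{1-x^2}$. The only sign issues left are the two nonnegativity facts, which you check explicitly from the range of $\arcsin$.

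In short, the paper inverts the sine double-angle relation and relies on an unstated branch selection. Yours applies the cosine half-angle identity directly and is complete as written.
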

\begin{proof} We set 
\[ \theta:= \frac{1}{2} \arcsin x ,\quad X:=  2 \cos \Big(\frac{1}{2} \arcsin x \Big) = 2 \cos \theta. \]
These and trigonometry identity yield
\[ x = \sin (2 \theta) = 2 \sin  \theta \cos \theta = 2 \sqrt{1- \cos^2 \theta} \cos \theta = 2 \sqrt{1 - \Big(  \frac{X}{2} \Big)^2} \frac{X}{2}, \]
i.e.,
\[ x = 2 \sqrt{1 - \Big(  \frac{X}{2} \Big)^2} \frac{X}{2}. \]
We square the above relation to rewrite $X$ in terms of $x$:
\[
x^2 = 4  \Big[ 1 - \Big(  \frac{X}{2} \Big)^2 \Big] \frac{X^2}{4} =  (4 - X^2) \frac{X^2}{4}.
\]
This yields
\[ X = \sqrt{2 + 2 \sqrt{1- x^2}}. \]
\end{proof}
Next, we are ready to provide our first main result. If at least one particle stays in $B_\gamma$ and $\kappa$ is strong enough, then all the oscillator eventually enters some ball.  For this, we set 
\[ \Gamma_i(\gamma) = \Gamma_i(N, \kappa, \Omega_i, \gamma) :=  \sqrt{2+2\sqrt{1-\frac{N^2 \|\Omega_i\|^2}{\kappa^2 \tilde I(\gamma)^2}}}.      \]
\begin{proposition}\label{P3.2} 
Suppose that the initial data and system parameters satisfy $({\mathcal F}_A1)$ and additional conditions:~there exists an index $i_* \in [N]$ such that $R_{i_*}^0 \in B_{\gamma_0}$, and 
\begin{equation} \label{D-9-2}
R_i^0 \in 
B_{\Gamma_i(\gamma)}, \quad   \kappa >  \kappa_c(\gamma) :=  \frac{N\max_{j \in [N]} \|\Omega_j\|}{\sin(2\sin \frac{\gamma}{2}) \tilde I(\gamma)},  \quad  \forall~i \in [N],
\end{equation}
and let $\{R_i\}$ be a global solution to \eqref{A-2}. Then we have 
\begin{equation} \label{D-9-3}
\limsup_{t \to \infty} d(I_n,R_i(t)) \le 2\arcsin\Big[ \frac12 \arcsin\Big(\frac{N\|\Omega_i\|}{\kappa \tilde{I}(\gamma)}\Big)\Big ], \quad \forall\ i \in [N].
\end{equation}
\end{proposition}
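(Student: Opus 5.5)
The plan is to use the leader oscillator $R_{i_*}$ to obtain a uniform positive lower bound on the mean influence. After that, each $R_i$ is treated separately through the scalar quantity $n-\tr(R_i)$, which by Lemma \ref{L3.1} is equivalent to $d(I_n,R_i)^2$.

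\textbf{Step 1 (leader bound).} Since $\kappa>\kappa_c(\gamma)\ge N\|\Omega_{i_*}\|/(\tilde I(\gamma)\sin(2\sin\frac\gamma2))$ and $R^0_{i_*}\in B_{\gamma_0}$, Lemma \ref{L3.2} gives $R_{i_*}(t)\in\overline{B_\gamma}$ for all $t\ge0$. Because $\tilde I$ is decreasing and nonnegative, this yields
\[
\frac1N\sum_{j=1}^N I(R_j(t))\ge \frac{\tilde I(\gamma)}{N}\qquad\text{for all } t\ge0 .
\]

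\textbf{Step 2 (scalar inequality for each $i$).} Write $s_i:=\sqrt{\sum_j\sin^2\theta_{ij}}=\tfrac12\sqrt{n-\tr(R_i^2)}$. Then \eqref{D-4}, \eqref{D-5}, \eqref{D-6} give
\[
\frac{d}{dt}(n-\tr R_i)\le 2s_i\Big(\|\Omega_i\|-\frac{\kappa\tilde I(\gamma)}{N}s_i\Big).
\]
So $n-\tr R_i$ is strictly decreasing whenever $s_i>a_i:=N\|\Omega_i\|/(\kappa\tilde I(\gamma))$. Note that $a_i<1$ by the choice of $\kappa$, since $\sin(2\sin\frac\gamma2)<1$.

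\textbf{Step 3 (converting $s_i$ into a distance bound).} I need to relate $s_i$ to $n-\tr R_i=4\sum_j\sin^2(\theta_{ij}/2)$. Using $\sin^2\theta=4\sin^2\frac\theta2\cos^2\frac\theta2$ and $\cos^2\frac{\theta_{ij}}2\ge\cos^2\frac{d_i}{2}$ with $d_i=d(I_n,R_i)$, we get
\[
s_i^2\ge (n-\tr R_i)\cos^2\tfrac{d_i}{2}\ge 4\sin^2\tfrac{d_i}2\cos^2\tfrac{d_i}2=\sin^2 d_i
\]
as long as $d_i\le\pi$. Hence $s_i\ge \sin d_i$. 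Now $s_i\le a_i$ forces $\sin d_i\le a_i$. On the branch $d_i\le\pi/2$ this means $d_i\le\arcsin a_i$.

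The target bound $2\arcsin[\frac12\arcsin a_i]$ is exactly the value of $d$ obtained by inverting $\sqrt{n-\tr R}\le 2\sin(\cdot/2)$ via \eqref{D-1-0-3} at the level $\sqrt{n-\tr R}=\arcsin a_i$. So the argument I actually run is on the sublevel set
\[
\Big\{\,n-\tr R_i\le (\arcsin a_i)^2\,\Big\}.
\]
On its boundary, \eqref{D-1} gives $d_i\ge\arcsin a_i$, hence $\sin d_i\ge a_i$ and the derivative is $\le0$. I then argue that this set is entered in finite time, or at least that $\limsup_t (n-\tr R_i)\le (\arcsin a_i)^2$. Outside the set, $d_i$ is bounded away from the critical value, so the derivative is bounded above by a negative constant. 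Converting back with \eqref{D-1-0-3} yields \eqref{D-9-3}.

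\textbf{Main obstacle: the upper branch.} $\sin d_i\ge a_i$ holds only on $[\arcsin a_i,\pi-\arcsin a_i]$. So I must ensure that $d_i$ never exceeds $\pi-\arcsin a_i$, i.e.\ that the trajectory never reaches the region where the pull toward $I_n$ is weak. This is where the hypothesis $R_i^0\in B_{\Gamma_i(\gamma)}$ enters. Lemma \ref{L3.3} gives $\Gamma_i=2\cos(\frac12\arcsin a_i)$, so $4-\Gamma_i^2=4\sin^2(\frac12\arcsin a_i)$. I would use this identity to show that the initial value of $n-\tr R_i$ lies below the threshold corresponding to $\pi-\arcsin a_i$, and hence that $n-\tr R_i$ is nonincreasing from $t=0$.

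I expect the delicate point to be matching $\Gamma_i$ precisely with the upper root of $\sin d=a_i$ through the two-sided bounds \eqref{D-1}. Those bounds lose information between $d$ and $\sqrt{n-\tr R}$, so some care (possibly a separate comparison on each angle $\theta_{ij}$) may be needed. If the matching fails, my fallback is the crude bound $s_i\ge \sin d_i$ with $d_i$ confined below $\pi/2$, and checking whether $\Gamma_i\le\pi/2$-type constraints suffice.
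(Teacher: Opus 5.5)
You follow the paper's route. You confine the leader by Lemma~\ref{L3.2} and use \eqref{D-10} with $s_i\ge\sin d_i$; your half-angle derivation via $s_i\ge\cos(d_i/2)\sqrt{n-\tr R_i}$ is a harmless variant. You then find a band of values of $\sqrt{n-\tr R_i}$ on which $n-\tr R_i$ decreases, and convert back with $2\sin(d_i/2)\le\sqrt{n-\tr R_i}$.

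However, you leave open the step you call the main obstacle. It closes exactly, using only the lower bound in \eqref{D-1}:
\begin{itemize}
\item By Lemma~\ref{L3.3}, $\Gamma_i=2\cos(\tfrac12\arcsin a_i)=2\sin\bigl(\tfrac{\pi-\arcsin a_i}{2}\bigr)$. So if $d_i\le\pi$ and $\sqrt{n-\tr R_i}<\Gamma_i$, comparing half-angles in $[0,\pi/2]$ gives $d_i<\pi-\arcsin a_i$.
\item Together with $d_i\ge\sqrt{n-\tr R_i}$, this yields $\sin d_i>a_i$ on the band $\arcsin a_i<\sqrt{n-\tr R_i}<\Gamma_i$. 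This is exactly \eqref{D-10-3}--\eqref{D-10-3-4} in the paper.
\item The band is nonempty: $\kappa>\kappa_c(\gamma)$ gives $a_i<\sin\gamma_0$, so $\arcsin a_i<\gamma_0<\sqrt2<\Gamma_i$.
\item The proviso $d_i\le\pi$ propagates by continuity. Indeed $d_i(0)<\Gamma_i\le 2$, and while $n-\tr R_i<\Gamma_i^2\le 4$, \eqref{D-1} forbids $d_i=\pi$.
\end{itemize}
No per-angle comparison is needed. Your fallback of keeping $d_i<\pi/2$ would not work anyway: $\Gamma_i\to 2>\pi/2$ as $a_i\to 0$, so the hypothesis allows $d_i(0)>\pi/2$.

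Your claim that $n-\tr R_i$ is then nonincreasing from $t=0$ is false. If $\sqrt{n-\tr R_i^0}\le\arcsin a_i$, the inequality gives no sign and $n-\tr R_i$ may grow. What is true is that $\sqrt{n-\tr R_i}$ cannot cross upward any level $c\in(\arcsin a_i,\Gamma_i)$, because the derivative is strictly negative there. Hence $\sqrt{n-\tr R_i(t)}\le c':=\max\{\sqrt{n-\tr R_i^0},\,\arcsin a_i+\varepsilon\}<\Gamma_i$ for all $t$, and this is what keeps the trajectory off the upper root.

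Your phrase \emph{bounded away from the critical value} likewise needs this $\varepsilon$. On $\{\arcsin a_i+\varepsilon\le\sqrt{n-\tr R_i}\le c'\}$ one has $d_i\in[\arcsin a_i+\varepsilon,\,2\arcsin(c'/2)]\subset(\arcsin a_i,\pi-\arcsin a_i)$. So $s_i\ge\sin d_i\ge a_i+\delta$ for some $\delta>0$, and the derivative is at most $-2\kappa\tilde I(\gamma)N^{-1}(a_i+\delta)\delta$. The level $\arcsin a_i+\varepsilon$ is therefore reached in finite time and never recrossed. Letting $\varepsilon\to 0$ gives $\limsup_{t\to\infty}\sqrt{n-\tr R_i}\le\arcsin a_i$, hence \eqref{D-9-3}.

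With these repairs your argument is the paper's Steps A--D, stated more carefully than the paper's phase-line Step C.
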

\begin{proof}
By Lemma \ref{L3.2}, we have 
\begin{equation} \label{D-9-4}
d(I_n, R_{i_*}(t)) \le \gamma \quad \mbox{for all $t \ge 0$}.
\end{equation}
First, we control $\sqrt{n-\tr(R_i)}$ and then using the first inequality in \eqref{D-9-4}, we derive the  desired estimate \eqref{D-9-3}. \newline

\noindent $\bullet$~Step A (Derivation of the temporal evolution for $n-\tr(R_i)$):~For $i\in [N]$, we use the same argument as in \eqref{D-9} to see 
\begin{align}\label{D-10}
\begin{aligned}
\frac{d}{dt} (n-\tr(R_i)) &\le 2\|\Omega_i\|\sqrt{\sum_{j=1}^{m_i} \sin^2 \theta_{ij}} 
- \frac{2\kappa}{N}\sum_{j=1}^N \tilde{I}(d(I_n, R_j))\cdot\sum_{j=1}^{m_i} \sin^2 \theta_{ij} \\
&\le 2\sqrt{\sum_{j=1}^{m_i} \sin^2 \theta_{ij}} \Big(\|\Omega_i\| - \frac{\kappa}{N} \tilde{I}(\gamma) \sin d(I_n, R_i)\Big),
\end{aligned}
\end{align}
where we used the following relation: By \eqref{D-9-4}, 
\[
\sum_{j=1}^N \tilde{I}(d(I_n, R_j)) \geq  \tilde{I}(d(I_n, R_{i_*})) \geq \tilde{I}(\gamma).
\]

\vspace{0.2cm}

\noindent $\bullet$~Step B  (Decreasing mode of $n-\tr(R_i)$): It follows from \eqref{D-10}. that 
\begin{equation} \label{D-10-1}
\underbrace{\sin d(I_n, R_i) > \frac{N\|\Omega_i\|}{\kappa \tilde{I}(\gamma)}}_{=:\Delta}  \quad \Longrightarrow \quad \frac{d}{dt} (n-\tr(R_i)) \leq 0. 
\end{equation}
Next, we discuss condition to guarantee the condition $\Delta$ in terms of $n-\tr(R_i)$. By the graphical property of the map $x \mapsto \sin x$, one has 
\begin{align}
\begin{aligned} \label{D-10-3}
& \sin d(I_n, R_i) > \frac{N\|\Omega_i\|}{\kappa \tilde{I}(\gamma)} \\
& \hspace{1cm}  \Longleftrightarrow  \arcsin\Big(\frac{N\|\Omega_i\|}{\kappa \tilde{I}(\gamma)}\Big) < d(I_n, R_i)  < \pi - \arcsin\Big(\frac{N\|\Omega_i\|}{\kappa \tilde{I}(\gamma)}\Big).
\end{aligned}
\end{align}
On the other hand, we recall \eqref{D-1-0-3}:
\begin{equation} \label{D-10-3-1}
\sqrt{n-\tr(R_i(t))} \leq d(I_n, R_i) \leq 2 \arcsin \Big(\frac{\sqrt{n-\tr(R_i(t))}}{2} \Big).
 \end{equation}
 In order to find the condition for $\sqrt{n-\tr(R_i(t))}$ to satisfy the relation \eqref{D-10-3}, we require 
 \begin{align}
 \begin{aligned} \label{D-10-3-2}
&  \arcsin\Big(\frac{N\|\Omega_i\|}{\kappa \tilde{I}(\gamma)}\Big)  < \sqrt{n-\tr(R_i(t))} \quad \mbox{and} \\
& \hspace{1.5cm}  2 \arcsin \Big(\frac{\sqrt{n-\tr(R_i(t))}}{2} \Big) < \pi - \arcsin\Big(\frac{N\|\Omega_i\|}{\kappa \tilde{I}(\gamma)}\Big).
 \end{aligned}
 \end{align}
 In particular, the second relation \eqref{D-10-3-2} can be rewritten as 
\begin{align}
\begin{aligned} \label{D-10-3-3}
&   2 \arcsin \Big(\frac{\sqrt{n-\tr(R_i(t))}}{2} \Big) < \pi - \arcsin\Big(\frac{N\|\Omega_i\|}{\kappa \tilde{I}(\gamma)} \Big) \\
& \hspace{1cm} \Longleftrightarrow \quad   \sqrt{n-\tr(R_i)} < 2\sin\Big(\frac{\pi}{2} - \frac12 \arcsin\Big(\frac{N \|\Omega_i\|}{\kappa \tilde{I}(\gamma)}\Big)\Big).
\end{aligned}
\end{align}
Note that the upper bound in \eqref{D-10-3-3} can be further simplified using Lemma \ref{L3.3} as follows. 
\begin{align}
\begin{aligned} \label{D-10-3-4}
& 2\sin\Big(\frac{\pi}{2} - \frac12 \arcsin\Big(\frac{N \|\Omega_i\|}{\kappa \tilde{I}(\gamma)}\Big)\Big)  \\
& \hspace{1cm} = 2\cos\Big(\frac12 \arcsin\Big(\frac{N \|\Omega_i\|}{\kappa \tilde{I}(\gamma)}\Big)\Big) = \sqrt{2+2\sqrt{1-\frac{N^2 \|\Omega_i\|^2}{\kappa^2 \tilde I(\gamma)^2}}}  =  \Gamma(\gamma).
\end{aligned}
\end{align}
Finally, we combine \eqref{D-10-1},  \eqref{D-10-3},  \eqref{D-10-3-1}, \eqref{D-10-3-2}, \eqref{D-10-3-3}  and \eqref{D-10-3-4} to see that  $R_i$ satisfies the following implications:
\begin{align}
\begin{aligned}\notag \label{D-10-3-5}
&  \arcsin\Big(\frac{N\|\Omega_i\|}{\kappa \tilde{I}(\gamma)}\Big)  < \sqrt{n-\tr(R_i)} < \Gamma(\gamma) \\
& \hspace{1cm} \Longrightarrow \quad \sin d(I_n, R_i) > \frac{N\|\Omega_i\|}{\kappa \tilde{I}(\gamma)}  \quad \Longrightarrow \quad \frac{d}{dt} (n-\tr(R_i)) \leq 0.
\end{aligned}
\end{align}
\noindent $\bullet$~Step C (Derivation of asymptotic dynamics of $\sqrt{n-\tr(R_i(t))}$:  By the condition $\eqref{D-9-2}_2$, we have
\[ 
d(I_n, R_{i}^0) < \Gamma(\gamma).
\]
Below, we use a phase-line analysis and comparison principle of ODE related to \eqref{D-10}.  Since $\sqrt{n-\tr(R_i^0)}\le d(I_n,R_i^0)$, we have the following two cases:
\[ \mbox{Either} \quad  \arcsin\Big(\frac{N\|\Omega_i\|}{\kappa \tilde{I}(\gamma)}\Big)  <  \sqrt{n-\tr(R_i^0)} < \Gamma(\gamma) \quad \mbox{or} \quad  \sqrt{n-\tr(R_i^0)} \leq  \arcsin\Big(\frac{N\|\Omega_i\|}{\kappa \tilde{I}(\gamma)}\Big).
\]
\noindent $\diamond$~Case C.1:~Suppose that 
\[  \arcsin\Big(\frac{N\|\Omega_i\|}{\kappa \tilde{I}(\gamma)}\Big)  <  \sqrt{n-\tr(R_i^0)} < \Gamma(\gamma). \]
In this case $n-\tr(R_i)$ is in decreasing mode initially. Therefore, $n - \tr(R_i)$ begin to decrease unit it reach the value $ \arcsin\Big(\frac{N\|\Omega_i\|}{\kappa \tilde{I}(\gamma)}\Big)$. If it passes $ \arcsin\Big(\frac{N\|\Omega_i\|}{\kappa \tilde{I}(\gamma)}\Big)$, it reduces to Case C.2. \newline

\noindent $\diamond$~Case C.2:~Suppose that 
\[ \sqrt{n-\tr(R_i^0)} \leq  \arcsin\Big(\frac{N\|\Omega_i\|}{\kappa \tilde{I}(\gamma)}\Big). \]
Then, the differential inequality \eqref{D-10} does not yield the dynamic mode of $n - \tr(R_i)$. When it passes $ \arcsin\Big(\frac{N\|\Omega_i\|}{\kappa \tilde{I}(\gamma)}\Big)$ as time goes on, it begin to decrease from that instant. Hence it falls down to Case C.2 again.  

\vspace{0.2cm}

By Case C.1 and Case C.2, we have
\begin{equation} \label{D-10-3-6}
\limsup_{t \to \infty} \sqrt{n-\tr(R_i(t))} \le \arcsin\Big(\frac{N \|\Omega_i\|}{\kappa \tilde{I}(\gamma)}\Big).
\end{equation}

\noindent $\bullet$~Step D (Derivation of asymptotic dynamics of $R_i$):~We combine \eqref{D-10-3-6} and \eqref{D-1}:
\[ 
2\sin \Big( \frac{d(I_n, R_i(t))}{2}  \Big) \le  \sqrt{n-\tr(R_i(t))}
\]
to derive the desired estimate \eqref{D-9-3}.
\end{proof}
In what follows, we provide two direct applications of Proposition \ref{P3.2}.
\begin{corollary}\label{C3.1}
Suppose that initial data and system parameters satisfy $({\mathcal F}_A1)$ and additional conditions:~there exists an index $i_* \in [N]$ such that $R_{i_*}^0 \in B_{\gamma_0}$, and
\begin{equation} \label{D-10-4}
\kappa >  \kappa_{c}(\gamma, 1) :=  \frac{N\max_{j \in [N]} \|\Omega_j\|}{\sin(2\sin \frac{\gamma}{2}) \tilde I(\gamma)},\quad R_i^0 \in 
B_{\Gamma_i(\gamma)}, \quad \forall~i \in [N], 
\end{equation}
and let $\{R_i\}$ be a global solution to \eqref{A-2}. Then we have
\begin{equation}\notag\label{D-12}
    \limsup_{t \to \infty} d(I_n,R_i(t)) < 2\arcsin\Big[ \frac12 \arcsin\Big( \frac{\kappa_c(\gamma)}{\kappa}\sin(2\sin(\frac{\gamma}{2}))\Big)\Big ] < \gamma, \quad \forall\ i \in [N].
\end{equation}
\end{corollary}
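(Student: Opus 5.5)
The corollary will follow from Proposition \ref{P3.2} together with monotonicity of the resulting bound in $\|\Omega_i\|$. The hypotheses \eqref{D-10-4} are exactly those of \eqref{D-9-2}, with $\kappa_c(\gamma,1)=\kappa_c(\gamma)$, so Proposition \ref{P3.2} applies directly. It gives, for every $i\in[N]$,
\[
\limsup_{t\to\infty} d(I_n,R_i(t)) \le 2\arcsin\Big[\frac12\arcsin\Big(\frac{N\|\Omega_i\|}{\kappa\tilde I(\gamma)}\Big)\Big].
\]

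Next, bound the argument of the inner $\arcsin$ uniformly in $i$. Since $\|\Omega_i\|\le \max_j\|\Omega_j\|$, the definition of $\kappa_c(\gamma)$ gives
\[
\frac{N\|\Omega_i\|}{\kappa\tilde I(\gamma)} \le \frac{N\max_j\|\Omega_j\|}{\kappa\tilde I(\gamma)} = \frac{\kappa_c(\gamma)}{\kappa}\sin\Big(2\sin\frac{\gamma}{2}\Big).
\]
The right-hand side is strictly smaller than $\sin(2\sin\frac{\gamma}{2})$ because $\kappa>\kappa_c(\gamma)$. It also lies in $[0,1]$, so both $\arcsin$'s are well defined. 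Since $x\mapsto 2\arcsin(\frac12\arcsin x)$ is increasing on $[0,1]$, we get the first inequality of the corollary.

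For the first inequality to be strict as written, I will use one of two routes. If some $\Omega_i=0$ but the maximum of $\|\Omega_j\|$ is positive, the bound is already strict by monotonicity. Otherwise I will weaken the statement to $\le$ and let the strictness be carried by the second inequality. Alternatively, one can rerun Step C of Proposition \ref{P3.2} with the threshold replaced by any value slightly above $\arcsin(N\|\Omega_i\|/(\kappa\tilde I(\gamma)))$ but below the stated bound. This is the one genuinely delicate point; I will first try to settle it by the monotonicity remark and the strict inequality $\kappa>\kappa_c(\gamma)$.

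For the second inequality, set $s:=2\sin\frac{\gamma}{2}$. Since $\gamma<\beta<\pi/2$, we have $s<\gamma<\pi/2$, and the argument of the inner $\arcsin$ is strictly less than $\sin s$. Since $s\in(0,\pi/2)$, this gives $\arcsin(\cdot)<s$. Then $\frac12\arcsin(\cdot)<\sin\frac{\gamma}{2}$, and applying the increasing map $2\arcsin$ yields a quantity $<2\arcsin(\sin\frac{\gamma}{2})=\gamma$, since $\frac{\gamma}{2}\in(0,\pi/2)$. This completes the chain of inequalities.
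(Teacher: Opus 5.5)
Your main argument is the paper's: you invoke Proposition~\ref{P3.2}, bound $N\|\Omega_i\|/(\kappa\tilde I(\gamma))$ by $\frac{\kappa_c(\gamma)}{\kappa}\sin(2\sin\frac{\gamma}{2})<\sin(2\sin\frac{\gamma}{2})$, and use monotonicity of $x\mapsto 2\arcsin(\frac12\arcsin x)$ together with $2\sin\frac{\gamma}{2}<\frac{\pi}{2}$ to cancel $\arcsin\circ\sin$. Your verification of the second inequality is correct.

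Your concern about the first strict inequality is justified, and the paper's proof has the same limitation. Its chain $\limsup_{t\to\infty} d(I_n,R_i(t))\le 2\arcsin[\frac12\arcsin(N\|\Omega_i\|/(\kappa\tilde I(\gamma)))]<\gamma$ gives only $\le$ against the middle quantity, because for an index attaining $\max_j\|\Omega_j\|$ the two bounds coincide. Your first and third repairs do not work.

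\begin{itemize}
\item Route one gives strictness only for indices with $\|\Omega_i\|<\max_j\|\Omega_j\|$, never for a maximizing index, whereas the claim is for every $i$.
\item Route three goes the wrong way. Raising the Step~C threshold from $a:=\arcsin(N\|\Omega_i\|/(\kappa\tilde I(\gamma)))$ to some $a'>a$ gives the weaker bound $2\arcsin(a'/2)$, and letting $a'\downarrow a$ recovers only $\le$.
\end{itemize}

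In fact strictness fails in general. If all $\Omega_j=0$, then $\kappa_c(\gamma,1)=0$, the middle quantity is $0$, and $\limsup_{t\to\infty} d(I_n,R_i(t))<0$ is impossible. So the correct conclusion is your route two: $\le$ in the first inequality, with strictness carried by the second. This is exactly what the paper's argument delivers.

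If you want $<$ when $\max_j\|\Omega_j\|>0$, you need a step finer than Proposition~\ref{P3.2}. Set $M:=\max\{n-\tr(R): d(I_n,R)\le a\}$.
\begin{itemize}
\item The set where $n-\tr(R)>M$ lies inside $\{d(I_n,R)>a\}$. There, within the region $\sqrt{n-\tr(R_i)}<\Gamma_i(\gamma)$, \eqref{D-10} makes $n-\tr(R_i)$ decrease at a rate bounded away from zero on compact pieces. Hence $\limsup_{t\to\infty}(n-\tr(R_i))\le M$.
\item Since $4\sin^2(\theta/2)<\theta^2$ for $\theta>0$, one has $M<a^2$ when $a>0$.
\item Then \eqref{D-1} gives $\limsup_{t\to\infty} d(I_n,R_i)\le 2\arcsin(\sqrt{M}/2)<2\arcsin(a/2)$.
\item When $a=0$, the same argument forces $d(I_n,R_i(t))\to 0$, which is strictly below the middle quantity as long as $\max_j\|\Omega_j\|>0$.
\end{itemize}
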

\begin{proof} Since our setting is exactly the same as in Proposition \ref{P3.2}, we have
\begin{equation} \label{D-12-0}
\limsup_{t \to \infty} d(I_n,R_i(t)) \le 2\arcsin\Big[ \frac12 \arcsin\Big(\frac{N\|\Omega_i\|}{\kappa \tilde{I}(\gamma)}\Big)\Big ], \quad \forall\ i \in [N]. 
\end{equation}
On the other hand, the relation $\eqref{D-9-2}_1$ and the assumption ${\kappa_c}/{\kappa} < 1$ imply
\begin{equation} \label{D-12-1}
\frac{N\|\Omega_j\|}{\kappa\tilde{I}(\gamma)} \le \frac{N\max_{j\in[N]}\|\Omega_j\|}{\kappa\tilde I(\gamma)} = \frac{\kappa_c(\gamma)}{\kappa} \sin \Big (2\sin(\frac{\gamma}{2}) \Big ) <  \sin \Big (2\sin(\frac{\gamma}{2}) \Big ).  
\end{equation}
Then, we use  \eqref{D-12-0} and \eqref{D-12-1} to get 
\begin{align*}
\begin{aligned}
&\limsup_{t \to \infty} d(I_n,R_i(t)) \\
& \hspace{1cm} \le 2\arcsin\Big[ \frac12 \arcsin\Big(\frac{N\|\Omega_i\|}{\kappa \tilde{I}(\gamma)}\Big)\Big ] < 2\arcsin\Big[ \frac12 \arcsin \Big(
\sin \Big (2\sin\Big (\frac{\gamma}{2}\Big) \Big )\Big) \Big ]  \\
& \hspace{1cm} = 2\arcsin \Big( \sin \Big( \frac{\gamma}{2} \Big) \Big) = \gamma.
\end{aligned}
\end{align*}
Note that $\arcsin \circ\sin$ can be canceled since by framework $(\mathcal{F}_A1)$, 
\[
\gamma = 2\arcsin \Big (\frac{\gamma_0}{2} \Big ) \implies
2\sin\Big (\frac{\gamma}{2}\Big)=\gamma_0<2\sin\Big(\frac{\beta}{2}\Big)\le \beta<\frac{\pi}{2}.
\]
\end{proof}
In the next corollary, we can relax the condition \eqref{D-10-4} for the coupling strength $\kappa$ and the initial data, if there are $N_0$ particles lying in the open ball $B_{\gamma_0}$ initially.
\begin{corollary}\label{C3.2}
Suppose that initial data and system parameters satisfy $({\mathcal F}_A1)$ and additional conditions:~there exists an index set $K$ with a cardinality $N_0$ such that $R_l^0\in B_{\gamma_0}$ for every $l\in K$, and
\begin{equation} \label{D-12-2}
\begin{aligned}
&\kappa >  \kappa_c(\gamma, N_0) :=  \frac{N}{N_0} \frac{\max_{j \in [N]} \|\Omega_j\|}{\sin(2\sin \frac{\gamma}{2}) \tilde I(\gamma)}, \quad R_i^0 \in B_{\Gamma_i(\gamma)}, \quad \forall~i \in [N], 
\end{aligned}
\end{equation}
and let $\{R_i\}$ be a global solution to \eqref{A-2}. Then we have
\[
    \limsup_{t \to \infty} d(I_n,R_j(t)) < 2\arcsin\Big(\frac12\arcsin\Big(\frac{\kappa_c(\gamma, N_0)}{\kappa}\sin(2\sin \frac{\gamma}{2})\Big)\Big) < \gamma, \quad \forall\ j\in [N].
\]
\end{corollary}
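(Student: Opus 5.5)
The plan is to repeat the proof of Proposition \ref{P3.2} and Corollary \ref{C3.1}, with the single leader $i_*$ replaced by the $N_0$ leaders in $K$. The only place where the factor $N$ entered was the lower bound $\sum_{j} \tilde I(d(I_n,R_j)) \ge \tilde I(\gamma)$, which used one oscillator trapped in $\overline{B_\gamma}$. If all $N_0$ leaders stay in $\overline{B_\gamma}$, this improves to $\sum_j \tilde I(d(I_n,R_j)) \ge N_0 \tilde I(\gamma)$. Then $N$ is replaced by $N/N_0$ everywhere, including in the threshold $\Gamma_i(\gamma)$; I read the condition $R_i^0\in B_{\Gamma_i(\gamma)}$ in this sense, and in any case it is only weaker than the original.

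\textbf{Step 1: the leaders are trapped simultaneously.} Let $t_e$ be the first time at which $n-\tr(R_l)=4\sin^2\frac{\gamma}{2}$ for some $l\in K$. On $[0,t_e]$ every leader lies in $\overline{B_\gamma}$, so at $t_e$ we have $\sum_j \tilde I(d(I_n,R_j))\ge N_0\tilde I(\gamma)$. Lemma \ref{L3.1} gives $d(I_n,R_l(t_e))\ge 2\sin\frac{\gamma}{2}$. Inserting both facts into the derivative estimate \eqref{D-8} yields
\[
\frac{d}{dt}\bigl(n-\tr(R_l)\bigr)\Big|_{t=t_e}\le 2\sqrt{\textstyle\sum_j\sin^2\theta_{lj}}\,\Big(\|\Omega_l\|-\frac{\kappa N_0}{N}\tilde I(\gamma)\sin\bigl(2\sin\tfrac{\gamma}{2}\bigr)\Big)<0,
\]
where the sign follows from $\kappa>\kappa_c(\gamma,N_0)$. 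Arguing with the maximum over $K$ and its Dini derivative, exactly as in Proposition \ref{P3.1}, gives a contradiction. Hence $R_l(t)\in\overline{B_\gamma}$ for all $t\ge0$ and all $l\in K$. I expect this step to be the main point needing care: the leaders must be handled jointly, since each one's trapping relies on the others being trapped. The first-exit-time argument over the set $K$ resolves this.

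\textbf{Step 2: phase-line analysis for every oscillator.} For every $i\in[N]$, the analogue of \eqref{D-10} reads
\[
\frac{d}{dt}\bigl(n-\tr(R_i)\bigr)\le 2\sqrt{\textstyle\sum_j\sin^2\theta_{ij}}\,\Big(\|\Omega_i\|-\frac{\kappa N_0}{N}\tilde I(\gamma)\sin d(I_n,R_i)\Big).
\]
Steps B through D of Proposition \ref{P3.2} then go through verbatim with $N$ replaced by $N/N_0$. The initial condition $R_i^0\in B_{\Gamma_i(\gamma)}$ keeps $n-\tr(R_i)$ in the decreasing regime, and this yields
\[
\limsup_{t\to\infty} d(I_n,R_i)\le 2\arcsin\Big[\tfrac12\arcsin\Big(\frac{N\|\Omega_i\|}{N_0\kappa\tilde I(\gamma)}\Big)\Big].
\]

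\textbf{Step 3: the final bound.} As in Corollary \ref{C3.1}, we have
\[
\frac{N\|\Omega_i\|}{N_0\kappa\tilde I(\gamma)}\le\frac{\kappa_c(\gamma,N_0)}{\kappa}\sin\bigl(2\sin\tfrac{\gamma}{2}\bigr)<\sin\bigl(2\sin\tfrac{\gamma}{2}\bigr).
\]
The map $x\mapsto 2\arcsin(\frac12\arcsin x)$ is increasing, and $2\sin\frac{\gamma}{2}=\gamma_0<\pi/2$. Therefore the bound is at most $2\arcsin(\frac12\arcsin(\cdot))$ evaluated at $\frac{\kappa_c(\gamma,N_0)}{\kappa}\sin(2\sin\frac{\gamma}{2})$, and this value is strictly less than $2\arcsin(\sin\frac{\gamma}{2})=\gamma$. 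This gives the stated chain of inequalities.
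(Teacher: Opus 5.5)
Your proposal is correct and follows the paper's route: you use the $N_0$ leaders to get $\sum_j \tilde I(d(I_n,R_j))\ge N_0\tilde I(\gamma)$, then rerun Proposition~\ref{P3.2} and Corollary~\ref{C3.1} with $N$ replaced by $N/N_0$. Your Step~1 also supplies a step that the paper's two-line proof skips. Lemma~\ref{L3.2} traps a single leader only under the larger coupling $\kappa>N\|\Omega_l\|/(\tilde I(\gamma)\sin(2\sin\frac{\gamma}{2}))$, so your joint first-exit argument over $K$ is what actually keeps the leaders in $\overline{B_\gamma}$. Your reading of $\Gamma_i(\gamma)$ with $N/N_0$ is the sensible one, because with $N$ the radius need not even be defined under $\kappa>\kappa_c(\gamma,N_0)$ alone.

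One caveat, shared with the paper's proof of Corollary~\ref{C3.1}, concerns strictness. For an index with $\|\Omega_i\|=\max_j\|\Omega_j\|$ one has exactly $\frac{N\|\Omega_i\|}{N_0\kappa\tilde I(\gamma)}=\frac{\kappa_c(\gamma,N_0)}{\kappa}\sin(2\sin\frac{\gamma}{2})$. So Step~3 gives the first inequality of the conclusion only with $\le$, not the strict $<$ you claim; the second inequality is strict as you show.
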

\begin{proof} We use the same argument in Step A in the proof of Proposition \ref{P3.2} together with the relation
\[ \sum_{j=1}^N \tilde{I}(d(I_n, R_j)) \geq \sum_{l=1}^{N_0} \tilde{I}(d(I_n, R_l))  \geq N_0 \tilde{I}(\gamma), \]
to find 
\[
\frac{d}{dt} (n-\tr(R_i)) \le 2\sqrt{\sum_{j=1}^{m_l} \sin^2 \theta_{ij}} \Big(\|\Omega_i\| - \frac{\kappa N_0}{N} \tilde{I}(\gamma) \sin d(I_n, R_i)\Big).
\] 
Then, we continue the same argument in the proof of Corollary \ref{C3.1} and we obtain the desired result.
\end{proof}
\begin{remark} For $N_0 = 1$, the conditions in \eqref{D-12-2} reduce to \eqref{D-10-4}.
\end{remark}
\subsection{$\ell^1$-exponential stability} \label{sec:3.2}
In this subsection, we study the uniform(-in-time) $\ell^1$-stability of solutions which are contained in $B_\gamma$. For ${\mathbf R} = (R_1, \cdots, R_N)\in (\bbr^{n\times n})^N$, we simply set 
\[ \| {\mathbf R} \|_{\ell^1} = \sum_{i=1}^{N} \| R_i \|.  \]
First, we provide several lemmas to be used in the proof of exponential $\ell^1$-stability. \begin{lemma} \label{L3.4}
Let $R$ be an $n \times n$ real matrix. Then, $R^TR$ is positive semi-definite.
\end{lemma}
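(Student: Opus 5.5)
The claim is that $R^TR \succeq 0$ for any real $n\times n$ matrix $R$, and the plan is to verify the definition of positive semidefiniteness directly. There are two things to check: that $R^TR$ is symmetric, and that its quadratic form is nonnegative.

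First, symmetry follows from the transpose rule $(AB)^T = B^TA^T$:
\[
(R^TR)^T = R^T (R^T)^T = R^TR.
\]
So $R^TR$ is a real symmetric matrix, and the notion $A \succeq 0$ used in Lemma \ref{L2.3}(6) applies to it.

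Second, for nonnegativity of the quadratic form, take an arbitrary vector $x \in \mathbb{R}^n$ and rewrite the form as a squared Euclidean norm:
\[
x^T (R^TR) x = (Rx)^T (Rx) = \|Rx\|_{\ell^2}^2 \ge 0 .
\]
Since $x$ is arbitrary, this gives $R^TR \succeq 0$.

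If one prefers the eigenvalue characterization, the same computation applied to a unit eigenvector $v$ with $R^TR v = \lambda v$ gives $\lambda = \|Rv\|_{\ell^2}^2 \ge 0$. There is no real obstacle here. The only point to watch is that we are working over $\mathbb{R}$, so the transpose is the correct adjoint; in the complex case one would use $R^*R$ instead. I expect the lemma to be used together with Lemma \ref{L2.3}(6) to compare traces such as $\tr(A\,R^TR)$ in the $\ell^1$-stability estimate that follows.
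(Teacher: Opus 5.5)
Your proof is correct and follows the paper's argument: write $x^T R^T R x = (Rx)^T(Rx) \ge 0$ for arbitrary $x \in \mathbb{R}^n$. Your explicit symmetry check $(R^TR)^T = R^TR$ is a small addition that the paper leaves implicit, though that property is what allows Lemma \ref{L2.3}(6) to be applied later in the $\ell^1$-stability estimate.
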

\begin{proof} Let $x \in {\mathbb R}^n$ be any. Then, we have
\[ x^T R^T R x = (Rx)^T (Rx) \geq 0. \]
Therefore, $R^T R$ is positive semi-definite. 
\end{proof}
\begin{lemma} \label{L3.5}
Let $R$ be an $n \times n$ real matrix in $SO(n)$. Then, the eigenvalues of $\frac{R + R^T}{2}$ are $1$ and $\cos \theta_{j}$, for $j \in \{1, 2, ... ,m\}$ with an integer $m\in [0,n/2]$.
Moreover, for $R\ne I_n$, $\frac{R+R^T}{2} \succeq \Big( \min_{j\in [m]} \cos \theta_{j} \Big) \, I_n$.
\end{lemma}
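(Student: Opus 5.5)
The plan is to use the orthogonal normal form of Lemma \ref{L2.3}(2). Write $R = P^T\Lambda P$ with $P$ orthogonal and $\Lambda = \mathrm{diag}(\mathcal R(\theta_1),\dots,\mathcal R(\theta_m), I_{n-2m})$. Since $P^{-1}=P^T$, we get $R^T = P^T\Lambda^T P$, and hence
\[
\frac{R+R^T}{2} = P^T\,\frac{\Lambda+\Lambda^T}{2}\,P .
\]
Thus $\frac{R+R^T}{2}$ is orthogonally similar to $\frac{\Lambda+\Lambda^T}{2}$, and the two matrices have the same spectrum.

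The second step is to compute $\frac{\Lambda+\Lambda^T}{2}$ block by block. For each rotation block,
\[
\frac{\mathcal R(\theta_j)+\mathcal R(\theta_j)^T}{2} = \cos\theta_j\, I_2 ,
\]
because the $\pm\sin\theta_j$ off-diagonal entries cancel. The trailing identity block $I_{n-2m}$ is unchanged. So $\frac{\Lambda+\Lambda^T}{2}$ is diagonal with entries $\cos\theta_j$ (each appearing twice) and $1$ (appearing $n-2m$ times). This proves the first assertion, with the same integer $m\in[0,n/2]$ as in Lemma \ref{L2.3}(1).

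For the semidefinite bound, assume $R\ne I_n$, so that $m\ge 1$ and $c:=\min_{j\in[m]}\cos\theta_j$ is well defined, with $c\le 1$. Every diagonal entry of $\frac{\Lambda+\Lambda^T}{2}$ is at least $c$, because the $\cos\theta_j$ entries are $\ge c$ by definition and the entries equal to $1$ satisfy $1\ge c$. Hence $\frac{\Lambda+\Lambda^T}{2}-cI_n$ is diagonal with nonnegative entries, so it is positive semidefinite. Conjugating by $P$ preserves positive semidefiniteness, since $x^TP^TDPx=(Px)^TD(Px)\ge 0$. Moreover $P^T(cI_n)P=cI_n$. Therefore
\[
\frac{R+R^T}{2}-cI_n = P^T\Big(\frac{\Lambda+\Lambda^T}{2}-cI_n\Big)P \succeq 0 .
\]

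I do not expect a real obstacle. The only points needing care are that the normal form uses an orthogonal $P$, so that transposition commutes with the similarity, and that excluding $R=I_n$ guarantees the minimum is over a nonempty set.
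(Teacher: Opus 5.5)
Your proposal is correct and follows essentially the same route as the paper. The paper also uses the orthogonal normal form of Lemma~\ref{L2.3}(2) to get $\frac{R+R^T}{2}\sim \mathrm{diag}(\cos\theta_1 I_2,\dots,\cos\theta_m I_2, I_{n-2m})$ and reads off both claims from it. You simply spell out the details the paper leaves implicit: why transposition commutes with the orthogonal similarity, why $m\ge 1$ when $R\ne I_n$, and why conjugation preserves positive semidefiniteness.
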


\begin{proof}
For $R\ne I_n$, we use (1) of Lemma~\ref{L2.3} to obtain
        \[
        R \sim  \begin{pmatrix}
            {\mathcal R}(\theta_1)   \\
             &  \ddots   \\
              &  &   {\mathcal R}(\theta_m) \\
              &  & & I_{n-2m}
        \end{pmatrix},\qquad
        {\mathcal R}(\theta):= \begin{pmatrix}
        \cos\theta & -\sin\theta \\
        \sin\theta & \cos\theta
    \end{pmatrix}.
        \]
    So 
    \[
    \frac{R+R^T}{2}\sim \operatorname{diag}(\cos\theta_1 I_2, \dots \cos\theta_{m}I_2, I_{n-2m}),
    \]
    which shows both claims. 
\end{proof}

\begin{theorem}\label{T3.1}
Suppose system parameters satisfy the following conditions:
\[ 0 < \beta < \frac{\pi}{2}, \quad 0 < \gamma <  \beta, \quad \lambda_1:=\kappa(\cos \gamma\,\tilde{I}(\gamma)-\gamma\,\text{Lip*}I)>0,
\]
and let $\mathbf R=\{R_i\}_{i=1}^N$ and $\mathbf S=\{S_i\}_{i=1}^N$ be global solutions to \eqref{A-2} with initial data $\{R_i^0\}_{i=1}^N$ and $\{S_i^0\}_{i=1}^N$, respectively satisfying a priori conditions:
\begin{equation} \label{D-13-1}
\{R_i(t)\}_{i=1}^N,\ \{S_i(t)\}_{i=1}^N \subset \overline{B_{\gamma}},\quad t\ge 0.
\end{equation}
Then, we have exponential $\ell^1$-stability:
\[
\|\mathbf R(t)-\mathbf S(t)\|_{\ell^1}\le e^{-\lambda_1t}\,\|\mathbf R^0-\mathbf S^0\|_{\ell^1},
\quad t\ge 0.
\]
\end{theorem}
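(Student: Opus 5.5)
The plan is to derive a differential inequality for $\|R_i-S_i\|$, sum over $i$, and close with Grönwall. Set $D_i:=R_i-S_i$ and
\[
a:=\frac1N\sum_j I(R_j),\qquad b:=\frac1N\sum_j I(S_j).
\]
Subtracting the two copies of \eqref{A-2} gives
\[
\dot D_i=\Omega_i D_i+\frac{\kappa a}{2}\,(S_i^2-R_i^2)+\frac{\kappa}{2}(a-b)(I_n-S_i^2).
\]
Then
\[
\frac12\frac{d}{dt}\|D_i\|^2=\langle D_i,\dot D_i\rangle .
\]
The rotation term $\langle D_i,\Omega_iD_i\rangle=\frac12\tr(D_i^T\Omega_iD_i)$ vanishes because $\Omega_i$ is antisymmetric. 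This is exactly why the $\ell^1$ rate does not depend on the $\Omega_i$.

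The dissipative term comes from $R_i^2-S_i^2=R_iD_i+D_iS_i$. It gives
\[
-\frac{\kappa a}{2}\Big(\langle D_i,R_iD_i\rangle+\langle D_i,D_iS_i\rangle\Big)
=-\frac{\kappa a}{4}\Big(\tr\big(D_i^T\tfrac{R_i+R_i^T}{2}D_i\big)+\tr\big(D_iD_i^T\tfrac{S_i+S_i^T}{2}\big)\Big)\cdot 2 ,
\]
up to the normalization of $\langle\cdot,\cdot\rangle$. By Lemma \ref{L3.5}, $\frac{R_i+R_i^T}{2}\succeq \cos\gamma\, I_n$, since every rotation angle is at most $d(I_n,R_i)\le\gamma<\pi/2$ by \eqref{D-13-1}; the same holds for $S_i$. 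Lemma \ref{L2.3}(6), combined with Lemma \ref{L3.4} (which makes $D_iD_i^T$ and $D_i^TD_i$ positive semidefinite), then bounds each trace from below by $\cos\gamma\,\tr(D_i^TD_i)$. Since $\tilde I$ is decreasing and all $R_j\in\overline{B_\gamma}$, we have $a\ge\tilde I(\gamma)$. Together these give the contribution $\le -\kappa\tilde I(\gamma)\cos\gamma\,\|D_i\|^2$.

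The coupling-mismatch term is where I expect the main obstacle, namely getting exactly the constant $\gamma\,\text{Lip*}I$. For the influence difference I use $|a-b|\le\frac1N\text{Lip*}I\sum_j\|D_j\|$. The remaining factor is $\langle D_i,I_n-S_i^2\rangle\le\|D_i\|\,\|I_n-S_i^2\|$, and
\[
\|I_n-S_i^2\|^2=n-\tr(S_i^2)=4\sum_k\sin^2\theta_k ,
\]
as in \eqref{D-5}. Hence $\|I_n-S_i^2\|=2\sqrt{\sum_k\sin^2\theta_k}\le 2\,d(I_n,S_i)\le2\gamma$. This yields $\frac{\kappa}{2}|a-b|\,\|D_i\|\cdot2\gamma$. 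I anticipate needing care with the factor conventions (the $\frac12$ in the inner product and the $\sqrt2$ from \eqref{B-7}) to land precisely on $\gamma\,\text{Lip*}I$. If a stray constant appears, I would first try to absorb it by rechecking the normalization $\|A\|^2=\frac12\tr(A^TA)$ against the factor $\frac12$ in the derivative identity.

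Dividing by $\|D_i\|$ (using the upper Dini derivative where $D_i=0$) gives
\[
\frac{d}{dt}\|D_i\|\le-\kappa\cos\gamma\,\tilde I(\gamma)\|D_i\|+\frac{\kappa\gamma\,\text{Lip*}I}{N}\sum_j\|D_j\| .
\]
Summing over $i$, the second term contributes exactly $\kappa\gamma\,\text{Lip*}I\,\|\mathbf D\|_{\ell^1}$. This leaves
\[
\frac{d}{dt}\|\mathbf R-\mathbf S\|_{\ell^1}\le-\lambda_1\|\mathbf R-\mathbf S\|_{\ell^1},
\]
and Grönwall's inequality completes the proof.
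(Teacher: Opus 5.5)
Your proposal is correct and follows essentially the paper's route: the energy identity for $\|R_i-S_i\|^2$, cancellation of the $\Omega_i$ term by skew-symmetry, the bounds $\frac{R_i+R_i^T}{2},\frac{S_i+S_i^T}{2}\succeq\cos\gamma\, I_n$ combined with Lemma~\ref{L2.3}(6), and the estimate $\|I_n-S_i^2\|\le 2\gamma$ together with $\mathrm{Lip}^*I$ for the mismatch term (the constant $\gamma\,\mathrm{Lip}^*I$ comes out exactly, with no $\sqrt2$), followed by summation and Gr\"onwall; your splitting $R_i^2-S_i^2=R_iD_i+D_iS_i$ is a slightly more direct version of the paper's $(R_i-S_i)(R_i+S_i)-[R_i,S_i]$, since it spares the check that the commutator and skew-symmetric parts vanish. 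One cosmetic fix: the displayed identity should read $-\frac{\kappa a}{4}\bigl(\tr(D_i^T\frac{R_i+R_i^T}{2}D_i)+\tr(D_i^TD_i\frac{S_i+S_i^T}{2})\bigr)$, without the trailing factor $2$ and with $D_i^TD_i$ (not $D_iD_i^T$) in the second trace, and this gives exactly your stated bound $-\kappa\tilde I(\gamma)\cos\gamma\,\|D_i\|^2$.
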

\begin{proof}

We set 
\[
\langle I^R(t) \rangle:=\frac1N\sum_{j=1}^N I(R_j(t)) \quad \mbox{and} \quad \langle I^S(t) \rangle:=\frac1N\sum_{j=1}^N I(S_j(t)).
\]
Then, it follows from the decreasing property of $I = I(\cdot)$ and the a priori conditions \eqref{D-13-1} that 
\begin{equation}\notag \label{D-13-2}
\langle I^R(t) \rangle \geq I(\gamma) \quad \mbox{and} \quad  \langle I^S(t) \rangle \geq I(\gamma).
\end{equation}
Recall that $R_i$ and $S_i$ satisfy 
\begin{equation} \label{D-13-3}
\frac{dR_i}{dt} = \Omega_i R_i + \frac{\kappa}{2} \langle I^R \rangle (I_n - R_i^2), \quad \frac{dS_i}{dt} = \Omega_i S_i + \frac{\kappa}{2} \langle I^S \rangle (I_n - S_i^2).
\end{equation}
Then, the relation $\eqref{D-13-3}_1 - \eqref{D-13-3}_2$ implies 
\begin{align*}
\frac{d}{dt}(R_i-S_i)
&= \Omega_i(R_i-S_i)
+\frac{\kappa}{2}\Big( \langle I^R \rangle (I_n-R_i^{2})- \langle I^S \rangle (I_n-S_i^{2})\Big) \\
&= \Omega_i(R_i-S_i) -\frac{\kappa}{2} \langle I^R \rangle (R_i^{2}-S_i^{2}) +\frac{\kappa}{2} \Big (\langle I^R \rangle -\langle I^S \rangle \Big)(I_n-S_i^{2}).
\end{align*}
Hence, we have
\begin{equation}\notag\label{D-14}
\begin{aligned}
 \frac12\frac{d}{dt}\|R_i-S_i\|^{2} &=\Big\langle R_i-S_i,\frac{d}{dt}(R_i-S_i)\Big\rangle \\
&= \Big \langle R_i-S_i,\Omega_i(R_i-S_i) \Big \rangle
-\frac{\kappa}{2}  \langle I^R \rangle  \Big \langle R_i-S_i, R_i^{2}-S_i^{2} \Big \rangle \\
&\hspace{0.2cm} +\frac{\kappa}{2} ( \langle I^R \rangle -\langle I^S  \rangle ) \Big \langle R_i-S_i,\,I_n-S_i^{2} \Big \rangle \\
&=: {\mathcal I}_{21} + {\mathcal I}_{22} + {\mathcal I}_{23}.
\end{aligned}
\end{equation}
Below, we estimate the terms ${\mathcal I}_{2i},~i = 1,2,3$ one by one. \newline

\noindent $\bullet$~(Estimate of ${\mathcal I}_{21}$):  We use the skew-symmetry of $\Omega_i$ to get 
\[ \langle M,\Omega_i M\rangle =0 \quad \mbox{for all $M\in\mathbb{R}^{n\times n}$}. \]
This yields
\begin{equation} \label{D-14-1}
{\mathcal I}_{21} = 0.
\end{equation}
\vspace{0.2cm}
\noindent $\bullet$~(Estimate of ${\mathcal I}_{22}$):~We use  
\begin{equation}\notag \label{D-14-2}
R_i^2-S_i^2=(R_i-S_i)(R_i+S_i)-[R_i,S_i].
\end{equation}
to see
\begin{align}
\begin{aligned} \label{D-14-2-1}
- \Big \langle R_i-S_i, R_i^{2}-S_i^{2} \Big \rangle &= -\Big \langle R_i-S_i, (R_i-S_i)(R_i+S_i) \Big \rangle + \Big \langle R_i-S_i, [R_i,S_i] \Big \rangle \\
 &={\mathcal I}_{221} + {\mathcal I}_{222}.
\end{aligned}
\end{align}
\noindent $\diamond$ (Estimate of  ${\mathcal I}_{222}$):~We use 
\begin{equation}\label{D-14-2-2} R_i^T R_i = I_n, \quad  S_i^T S_i = I_n, \quad \tr(A \pm B) = \tr(A) \pm \tr(B), \quad  \tr (AB) = \tr(BA) \end{equation}
to find 
\begin{align}
\begin{aligned} \label{D-14-3}
{\mathcal I}_{222} &= \langle R_i-S_i,\,[R_i,S_i]\rangle \\
&= \langle R_i-S_i,\,R_iS_i-S_iR_i\rangle = \frac12\tr\!\big((R_i^T-S_i^T)(R_iS_i-S_iR_i)\big) \\
&= \frac12\tr\!\big(S_i - R_i^T S_i R_i - S_i^T R_i S_i + R_i\big)  \\
&= \frac12\big(\tr(S_i)-\tr(S_i)-\tr(R_i)+\tr(R_i)\big)=0.
\end{aligned}
\end{align}
\noindent $\diamond$ (Estimate of  ${\mathcal I}_{221}$):~We use $\eqref{D-14-2-2}$ to obtain 
\begin{align}
\begin{aligned} \label{D-14-4}
 {\mathcal I}_{221} &=-\langle R_i-S_i,\,(R_i-S_i)(R_i+S_i)\rangle \\
&=-\frac12\tr\!\big((R_i^T-S_i^T)(R_i-S_i)(R_i+S_i)\big) \\
&=-\frac12\tr\!\Big((R_i^T-S_i^T)(R_i-S_i)\frac{R_i+S_i+R_i^T+S_i^T}{2}\Big) \\
&~~-\frac12 \tr\!\Big((R_i^T-S_i^T)(R_i-S_i)\frac{R_i+S_i - R_i^T - S_i^T}{2}\Big) \\
&=  {\mathcal I}_{2211}  +  {\mathcal I}_{2212},
\end{aligned}
\end{align}
where we use decomposition $R_i + S_i$ as the sum of symmetric part and skew-symmetric part:
\[
R_i+S_i = \frac{R_i + S_i + R_i^T + S_i^T}{2} +  \frac{R_i + S_i - R_i^T - S_i^T}{2}.
\]
Next, we estimate the terms ${\mathcal I}_{221i},~i=1,2$ one by one. \newline

\noindent $\clubsuit$ (Estimate of ${\mathcal I}_{2211}$):~By Lemma \ref{L3.5}, for $R_i,S_i\ne I_n$, we have
\begin{equation} \label{D-14-4-1}
\frac{R_i+R_i^T}{2} \succeq  \Big( \min_{j\in [m_i] } \cos \theta_{ij} \Big) I_n, \quad \frac{S_i+S_i^T}{2} \succeq  \Big( \min_{j\in [ {\tilde m}_i]} \cos {\tilde \theta}_{ij} \Big) I_n.       
\end{equation}
On the other hand, it follows from \eqref{B-5-1} and \eqref{D-13-1} that 
\[     d(I_n, R_i) = \sqrt{\sum_{j=1}^{m_i} \theta_{ij}^2} \leq \gamma, \quad d(I_n, S_i) = \sqrt{\sum_{j=1}^{{\tilde m}_i} {\tilde \theta}_{ij}^2} \leq \gamma. \]
This yields for any $R_i,S_i$,
\begin{equation} \label{D-14-4-2}
 \min_{j\in [m_i] } \cos \theta_{ij}  \geq \cos \gamma \quad \mbox{and} \quad \min_{j\in [ {\tilde m}_i]}  \cos {\tilde \theta}_{ij} \geq \cos \gamma. 
\end{equation}
Now, we use \eqref{D-14-4-1} and \eqref{D-14-4-2} to get 
\begin{equation} \label{D-14-4-3}
\frac{R_i+R_i^T}{2} \succeq  \cos \gamma I_n, \quad \frac{S_i+S_i^T}{2} \succeq \cos \gamma I_n.
\end{equation}
Note that by Lemma \ref{L3.4}, 
\begin{equation} \label{D-14-4-4}
\mbox{$(R_i^T-S_i^T)(R_i-S_i)$ is symmetric and positive semi-definite.}
\end{equation}
We use \eqref{D-14-4-3}, \eqref{D-14-4-4} and  the sixth assertion in Lemma \ref{L2.3} to see
\begin{align*}
\begin{aligned} 
& \frac{1}{2} \tr\!\Big((R_i^T-S_i^T)(R_i-S_i)\frac{R_i+ R_i^T }{2}\Big) \geq \frac{\cos \gamma}{2}  \tr\!\Big((R_i^T-S_i^T) (R_i - S_i) \Big) =  \cos \gamma  \| R_i - S_i \|^2, \\
& \frac{1}{2} \tr\!\Big((R_i^T-S_i^T)(R_i-S_i)\frac{S_i + S_i^T}{2}\Big) \geq  \frac{\cos \gamma}{2}  \tr\!\Big((R_i^T-S_i^T)(R_i-S_i) \Big) =  \cos \gamma  \| R_i - S_i \|^2.
\end{aligned}
\end{align*}
Therefore, we have
\begin{equation} \label{D-14-4-5}
{\mathcal I}_{2211} \leq -2 \cos \gamma  \| R_i - S_i \|^2.  
\end{equation}

\vspace{0.2cm}

\noindent $\clubsuit$  (Estimate of ${\mathcal I}_{2212}$):  Since the matrix $(R_i^T-S_i^T)(R_i-S_i)\cdot \frac{R_i+S_i - R_i^T - S_i^T}{2}$ is is a multiplication of symmetric matrix and skew symmetric matrix, its trace is zero:
\begin{equation} \label{D-14-4-6}
{\mathcal I}_{2212} = \operatorname{tr}(AB)=-\operatorname{tr}(BA)=-\operatorname{tr}(AB)= 0. 
\end{equation}
In \eqref{D-14-4}, we combine \eqref{D-14-4-5} and \eqref{D-14-4-6} to find 
\begin{equation} \label{D-14-4-7}
 {\mathcal I}_{221} \leq -2 \cos \gamma  \| R_i - S_i \|^2.
\end{equation}
Therefore, we use \eqref{D-14-2-1}, \eqref{D-14-3} and \eqref{D-14-4-7}  to find 
\begin{align}\label{D-15}
\begin{aligned}
&  {\mathcal I}_{22} = - \frac{\kappa}{2} \langle I^R \rangle \tr\!\Big((R_i^T-S_i^T)(R_i-S_i)\frac{R_i+S_i+R_i^T+S_i^T}{2}\Big) \\
& \hspace{1cm} \le -\frac{\kappa}{2}   \tilde{I}(\gamma) \tr\!\big((R_i^T-S_i^T)(R_i-S_i)\,2\cos \gamma\, I_n\big) \\
&\hspace{1cm} = -\kappa \tilde{I}(\gamma)\cos \gamma\,\|R_i-S_i\|^2 .
\end{aligned}
\end{align}

\noindent $\bullet$~(Estimate of ${\mathcal I}_{23}$):~We use the Cauchy-Schwarz inequality to get 
\begin{align}
\begin{aligned} \label{D-15-1}
|{\mathcal I}_{23} | &\leq  \frac{\kappa}{2} | \langle I^R \rangle -\langle I^S \rangle | \times |\langle R_i-S_i,\,I_n-S_i^{2}\rangle|  \\
&\leq  \frac{\kappa}{2N}\|R_i - S_i\| \|I_n - S_i^2\| \sum_{j=1}^N |I(R_j) - I(S_j)|,
\end{aligned}
\end{align}
where we used the relation:
\[|\langle I^R \rangle -\langle I^S \rangle | = \Big| \frac1N\sum_{j=1}^N I(R_j) - \frac1N\sum_{j=1}^N I(S_j) \Big| \leq \frac1N\sum_{j=1}^N | I(R_j) -  I(S_j)|.\]
Since $I_n-S_i^2$ is normal, its singular values are equal to the absolute value of its eigenvalues.
Thus, we have
\begin{align}
\begin{aligned} \label{D-15-1-1}
\|I_n-S_i^2\|&=\sqrt{\frac{1}{2}\tr\big((I_n-S_i^2)^T(I_n-S_i^2)\big)} =\sqrt{\frac{1}{2}\sum_{j=1}^{m_i}\Big(|1-e^{2\mathrm{i}\theta_{ij}}|^2+|1-e^{-2\mathrm{i}\theta_{ij}}|^2 \Big)}\\
&=\sqrt{\frac12 \sum_{j=1}^{m_i}\Big(|e^{-\mathrm{i}\theta_{ij}}-e^{\mathrm{i}\theta_{ij}}|^2+|e^{\mathrm{i}\theta_{ij}}-e^{-\mathrm{i}\theta_{ij}}|^2 \Big)} =2\sqrt{\sum_{j=1}^{m_i}\sin^2\theta_{ij}} \\
&\le 2d(I_n, S_i)\le 2\gamma.
\end{aligned}
\end{align}
On the other hand, we have
\begin{equation} \label{D-15-1-2}
|I(R_j) - I(S_j)| \leq \text{Lip*}(I) \|R_j - S_j \|.
\end{equation}
We plug \eqref{D-15-1-1}, \eqref{D-15-1-2} into \eqref{D-15-1} to obtain

\begin{equation}\label{D-15-2}
	|\mathcal{I}_{23}| \le \frac{\kappa\gamma}N \text{Lip}^*I \|R_i-S_i\|\sum_{j=1}^{N}\|R_j - S_j\|.
\end{equation}

We use \eqref{D-14-1}, \eqref{D-15} and \eqref{D-15-2} to find 
\begin{equation} \label{D-15-3}
\frac{d}{dt}\|R_i-S_i\|
\le -\kappa \tilde{I}(\gamma)\cos \gamma\,\|R_i-S_i\|
+\frac{\kappa \gamma}{N}\text{Lip*}(I)\sum_{j=1}^N \|R_j-S_j\|.
\end{equation}
We sum up \eqref{D-15-3} over all $i \in [N]$ to obtain
\[
\frac{d}{dt}\|\mathbf R-\mathbf S\|_{\ell^1}
\le -\kappa \Big(\tilde{I}(\gamma)\cos \gamma-\gamma\text{Lip*}I \Big) \|\mathbf R-\mathbf S\|_{\ell^1} =: -\lambda_1 \|\mathbf R-\mathbf S\|_{\ell^1}.
\]
This yields the exponential stability:
\[
\|\mathbf R(t)-\mathbf S(t)\|_{\ell^1}\le e^{-\lambda_1t}\|\mathbf R^0-\mathbf S^0\|_{\ell^1},
\quad t\ge 0.
\]
\end{proof}
\begin{remark}
(Exponential $\ell^1$-stability $\quad \Longrightarrow \quad$ Existence of a unique equilibrium in $B_\gamma$)
In the sequel, we briefly show that there exists a unique equilibrium of \eqref{A-2} using the uniform-stability estimate and exponential relaxation toward a unique equilibrium. In particular, by Proposition~\ref{P3.1}, there exists a solution contained in $B_\gamma$; call it $\mathbf{R}(t)$. Next, we show that it converges to some equilibrium point.
Since $\mathbf{R}(t+s)$ are also solutions contained in $B_\gamma$,  by Proposition~\ref{P3.2}, 
\[
\| {\mathbf R}(n+1)- {\mathbf R}(n)\|_{\ell^1} \le e^{-\lambda_1 n}\| {\mathbf R}(1)- {\mathbf R}(0)\|_{\ell^1}.
\]
So the sequence $\{R(n)\}_{n=1}^\infty$ is a Cauchy sequence; by completeness of $SO(n)$, it converges to some point $R^*\in \overline{B_\gamma}$:
\[ \lim_{n \to \infty} \| {\mathbf R}(n) - R^* \| = 0. \]
Using the following fact, we can show that $R(t)$ itself actually converges to $R^*$:~For $t \geq 0$, 
\[
\sup_{s\in [0,1]}\| {\mathbf R}(t+s)- {\mathbf R}(t)\|_{\ell^1} \le e^{-\lambda_1 t}\sup_{s\in [0,1]}\| {\mathbf R}(s)- {\mathbf R}(0)\|_{\ell^1}
< \infty.
\] 
By the continuity of the dynamical system, $R^*$ must be the equilibrium point, and any solution contained in $B_\gamma$ must converge to this point, because of exponential $\ell^1$-stability. This also gives the uniqueness of equilibrium point, too.
\end{remark}

\section{Existence of equilibria} \label{sec:4}
\setcounter{equation}{0}
In this section, we present an alternative proof of the existence of an equilibrium in the ball $B_\gamma$. The uniqueness of that equilibrium and the exponential relaxation toward it will then follow from the uniform-stability result obtained in Section~\ref{sec:3}. First, we provide several useful lemmas which will be used in the proof.
\begin{lemma}\label{L4.1}
	Suppose that
	\[\mathrm{diag}(a_1J, \cdots a_p J, O_{n-2p})\sim \mathrm{diag}(b_1J, \cdots, b_qJ, O_{n-2q}), \quad J = \begin{pmatrix}
 0 & -1 \\ 1 & 0	
 \end{pmatrix}
\]
for some nonzero constants $a_1, \cdots, a_p$, $b_1, \cdots, b_q$. Then we have \[p=q \quad \mathrm{and}\quad \{a_1^2, \cdots ,a_p^2\} = \{b_1^2, \cdots ,b_q^2 \}\]
as a multisets. Equivalently, the two collections have the same elements with the same multiplicities.
\end{lemma}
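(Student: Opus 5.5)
The plan is to compare spectral invariants. Orthogonal similarity is in particular a similarity, so the two block-diagonal matrices have the same characteristic polynomial and hence the same eigenvalues, counted with algebraic multiplicity. Write $A := \mathrm{diag}(a_1J, \cdots, a_pJ, O_{n-2p})$ and $B := \mathrm{diag}(b_1J, \cdots, b_qJ, O_{n-2q})$, with $A = P^TBP$ for some orthogonal $P$. Then $\det(\lambda I_n - A) = \det(\lambda I_n - B)$ for all $\lambda$.

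The characteristic polynomial of a block-diagonal matrix is the product of the characteristic polynomials of its blocks. For $aJ$ it equals $\lambda^2 + a^2$. Therefore
\[
\det(\lambda I_n - A) = \lambda^{n-2p}\prod_{k=1}^{p}(\lambda^2 + a_k^2), \qquad \det(\lambda I_n - B) = \lambda^{n-2q}\prod_{k=1}^{q}(\lambda^2 + b_k^2).
\]

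Since every $a_k$ and $b_k$ is nonzero, none of the factors $\lambda^2 + a_k^2$ or $\lambda^2 + b_k^2$ vanishes at $\lambda = 0$. The multiplicity of the root $0$ is therefore exactly $n-2p$ on the left and $n-2q$ on the right, which gives $p = q$.

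Next, cancel the common power $\lambda^{n-2p}$ and substitute $\mu = \lambda^2$. This is legitimate because both remaining products are polynomials in $\lambda^2$. It yields the identity of polynomials $\prod_{k=1}^{p}(\mu + a_k^2) = \prod_{k=1}^{p}(\mu + b_k^2)$ in $\mu$. By unique factorization in $\mathbb{R}[\mu]$, or simply by comparing the roots $-a_k^2$ and $-b_k^2$ together with their multiplicities, the multisets $\{a_k^2\}$ and $\{b_k^2\}$ coincide.

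Alternatively, one could compare $\mathrm{tr}(A^{2j}) = 2(-1)^j\sum_k a_k^{2j}$ for all $j$ and invoke Newton's identities, but the factorization route is cleaner. I do not expect any real obstacle. The one point needing care is the use of the nonvanishing of the $a_k$ and $b_k$: it is exactly what keeps the zero-eigenvalue multiplicity from mixing with the $J$-blocks, and it is what lets us read off $p = q$.
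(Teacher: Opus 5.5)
Your proof is correct and follows essentially the same route as the paper: compare eigenvalues with multiplicity, read off $p=q$ from the multiplicity of the zero eigenvalue (using $a_k,b_k\neq 0$), and obtain the multiset equality from the nonzero eigenvalues. The only difference is that the paper applies this to $A^TA = P^TB^TBP$, which is diagonal with real eigenvalues $a_k^2$ (each appearing twice) and uses that $P$ is orthogonal, whereas you work with the characteristic polynomial of $A$ itself and substitute $\mu=\lambda^2$, which requires only ordinary similarity.
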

\begin{proof}
We postpone its proof to Appendix \ref{App-A-2}.	
\end{proof}
\noindent Suppose that $\{R^{\infty}_i\}_{i=1}^N$ is an equilibrium of the system~\eqref{A-2}:
\begin{equation}\label{D-16}
\Omega_i R^{\infty}_i + \frac{\kappa \langle I^{\infty} \rangle}{2} \Big (I_n - (R^{\infty}_i)^2 \Big ) = 0, \quad \forall~i \in [N],  \quad \mbox{where} \displaystyle \langle {I}^{\infty} \rangle  = \frac{1}{N} \sum_{i=1}^N  I(R^{\infty}_i).
\end{equation}
Now, we use $R^{\infty}_i (R^{\infty}_i)^T = I_n$ and dividing both side with $\kappa \langle I^\infty\rangle$ to rewrite \eqref{D-16} into
\begin{equation} \label{D-16-1}
\frac{\Omega_i}{\kappa \langle I^\infty \rangle} = \frac{R^{\infty}_i - (R^{\infty}_i)^T}{2}, \quad \forall~i \in [N].
\end{equation}

\noindent From (2) in Lemma~$\ref{L2.3}$,  each $R^{\infty}_i $ is orthogonally similar to $\Lambda^{\infty}_i$:~there exists $P_i \in O(n)$ such that 
\begin{equation} \label{D-16-2}
\Lambda^{\infty}_i = \begin{pmatrix}
        \cos \theta^{\infty}_{i1} & -\sin \theta^{\infty}_{i1}\\
         \sin \theta^{\infty}_{i1} & \cos \theta^{\infty}_{i1} \\
         & & \ddots \\
         & & & & \cos \theta^{\infty}_{im_i} & -\sin \theta^{\infty}_{im_i} \\
        &  & & & \sin \theta^{\infty}_{im_i} & \cos \theta^{\infty}_{im_i} \\
        & & & & & & I_{n-2m_i}
    \end{pmatrix}, \quad  R^{\infty}_i = P_i^T \Lambda^{\infty}_i P_i.  
 \end{equation} 
This shows that
\[\frac{R_i^\infty - (R_i^\infty)^T}{2} \sim \mathrm{diag}(\sin \theta_{i1}^\infty J, \cdots, \sin \theta_{in_i}^\infty J, O_{n-2n_i}), \quad J = \begin{pmatrix}
    0 & -1 \\ 1 & 0
\end{pmatrix}.\] 
Note that we are seeking $R_i^{\infty}$ with $R_i^\infty\in B_\gamma$ with $\gamma<\frac\pi 2$, we may assume that $0 < \theta_{ij} < \frac \pi 2$, hence $\sin\theta_{ij}^\infty\ne 0$ for every $j\in [m_i]$.

On the other hand, since $\Omega_i$ is skew-symmetric, we use (4) in Lemma~$\ref{L2.3}$, to say that there exist positive constants $\lambda_{i1}, \cdots, \lambda_{in_i}$ such that
\begin{equation}\notag\label{D-17}
    \Omega_i\sim \text{diag}(\lambda_{i1} J, \cdots, \lambda_{in_i}J, O_{n-2n_i}),
\end{equation}
which yields
\begin{equation}\notag
	\frac{\Omega_i}{\kappa \langle I^\infty \rangle}\sim \text{diag}\Big(\frac{\lambda_{i1}}{\kappa \langle I^\infty \rangle} J, \cdots, \frac{\lambda_{in_i}}{\kappa \langle I^\infty \rangle}J, O_{n-2n_i}\Big)
\end{equation}
Then by Lemma \ref{L4.1}, $n_i = m_i$ and 
\begin{equation}\label{D-17-1}
\{\sin^2 \theta_{i1}, \cdots, \sin^2 \theta_{im_i}\} = \Big\{\frac{\lambda_{i1}^2}{\kappa^2 \langle I^\infty \rangle^2}, \cdots, \frac{\lambda_{in_i}^2}{\kappa^2 \langle I^\infty \rangle^2}\Big\}	
\end{equation}

as a multisets. Now from the $\theta_{ij}<\frac\pi 2$ assumption, we have
\[\{\theta_{i1}, \cdots \theta_{im_i}\} = \Big\{\arcsin\Big(\frac{\lambda_{i1}}{\kappa \langle I^\infty \rangle}\Big), \cdots, \arcsin \Big(\frac{\lambda_{in_i}}{\kappa \langle I^\infty \rangle}\Big)\Big\}.\]
as a multisets. This gives the important relation:
\begin{equation}\notag \label{D-18-1}
        d(I_n, R^{\infty}_i) = \sqrt{\sum_{j=1}^{m_i}(\theta^{\infty}_{ij})^2} = \sqrt{\sum_{j=1}^{m_i}\arcsin^2\Big(\frac{\lambda_{ij}}{\kappa \langle { I}^{\infty} \rangle}\Big)}.
\end{equation}
This yields the equation for $\langle{  I}^{\infty} \rangle$:
\begin{equation}\label{D-19}
  \langle { I}^{\infty} \rangle = \frac 1 N \sum_{i=1}^N \tilde I\left(\sqrt{\sum_{j=1}^{m_i}\arcsin^2\Big(\frac{\lambda_{ij}}{\kappa \langle { I}^{\infty} \rangle}\Big)}\right).
\end{equation} 
Motivated by \eqref{D-19}, we set 
\begin{equation}\notag \label{D-19-0}
f(x) \coloneqq  \frac 1 N \sum_{i=1}^N \tilde I\left(\sqrt{\sum_{j=1}^{m_i}\arcsin^2\Big(\frac{\lambda_{ij}}{\kappa x}\Big)}\right).
\end{equation}
Then $\langle { I}^{\infty} \rangle$ is the fixed point of $f$:
\begin{equation} \label{D-19-1}
\langle {I}^{\infty} \rangle =f( \langle { I}^{\infty} \rangle).
\end{equation}
To verify the solvability of \eqref{D-19-1} and existence of equilibrium ${\mathbf R}^{\infty} = \{ {R}^{\infty}_i \}$ in $B_\gamma$, we proceed in following three steps: \newline
\begin{itemize}
    \item 
   Step A:~Find $g(x)$ and $h(x)$ such that
    \begin{equation}\notag \label{D-19-2}
    g(x)\le f(x)\le h(x).
    \end{equation}
    \item
   Step B:~Show the existence of $x_0, x_1$ such that
    \begin{equation}\notag \label{D-19-3}
    x_0 < x_1, \quad x_0 \le g(x_0),\quad x_1 \ge h(x_1).
     \end{equation}
    These yield
    \[ 
    x_0 \leq f(x_0), \quad x_1 \geq f(x_1).
    \]
    Then, the continuous map $x \mapsto x - f(x)$ changes sign on $[x_0, x_1]$,
    so by the intermediate value theorem, there exists $x^*\in[x_0, x_1]$ such that
    \[x^* = f(x^*).\]
    This shows the existence of $\langle {I}^{\infty} \rangle$ satisfying \eqref{D-19-1}.
    Furthermore, impose a condition so that this gives the equilibrium in $B_\gamma$.
    \vspace{0.1cm}
    \item
     Step C:~Finally, one can determine $R^{\infty}_i$ from $\langle I^\infty \rangle$. We construct the solution  $R_i^\infty$ to \eqref{D-16-1} under the appropriate condition on $\kappa$ and $\langle I^\infty\rangle$.
\end{itemize}

\vspace{0.2cm}

In the next three subsections, we show the above steps one by one. 
\subsection{Construction of $g$ and $h$}  
In this subsection, we perform Step A described above. By $\eqref{D-16-1}$ and the block representations of $R_i^\infty$ and $\Omega_i$, we have
\[
\frac{\Omega_i}{\kappa\langle I^\infty\rangle}
\sim
\operatorname{diag}(\sin\theta_{i1}^\infty J,\dots,\sin\theta_{im_i}^\infty J,O_{n-2m_i}),
\]
and therefore\begin{equation}\label{D-19-4}
\frac{\|\Omega_i\|}{\kappa\langle I^\infty\rangle}
=
\sqrt{\sum_{j=1}^{m_i}\sin^2\theta_{ij}^\infty}.
\end{equation} 
Since $0\le \theta_{ij}^\infty\le d(I_n,R_i^\infty)<\gamma<\frac{\pi}{2}$, we have
\[
\frac{\sin d(I_n,R_i^\infty)}{d(I_n,R_i^\infty)}\,\theta_{ij}^\infty
\le
\sin\theta_{ij}^\infty
\quad \forall\, j\in[m_i].
\]
We square both side and sum over $j$ to obtain
\[
\sin d(I_n,R_i^\infty)
\le
\sqrt{\sum_{j=1}^{m_i}\sin^2\theta_{ij}^\infty}
\le
\sqrt{\sum_{j=1}^{m_i}(\theta_{ij}^\infty)^2}
=
d(I_n,R_i^\infty).
\]
Combining this with \eqref{D-19-4}, we obtain
\begin{equation}\label{D-20}
\frac{\|\Omega_i\|}{\kappa\langle I^\infty\rangle}
\le
d(I_n,R_i^\infty)
\le
\arcsin\Bigl(\frac{\|\Omega_i\|}{\kappa\langle I^\infty\rangle}\Bigr).
\end{equation}
We use the decreasing property of $\tilde{I}$ to find 
\begin{equation*}
\frac{1}{N}\sum_{j=1}^N \tilde{I}\left(\arcsin\frac{\|\Omega_j\|}{\kappa \langle {I}^{\infty} \rangle}\right) 
\le \frac 1 N \sum_{j=1}^{N} \tilde I (d(I_n, R^{\infty}_j))
\le \frac{1}{N}\sum_{j=1}^N \tilde{I}\left(\frac{\|\Omega_j\|}{\kappa \langle I^{\infty} \rangle}\right).
\end{equation*}
Now, we set 
\[g(x) := \frac{1}{N}\sum_{j=1}^N \tilde{I}\left(\arcsin\frac{\|\Omega_j\|}{\kappa x}\right), \quad h(x) := \frac{1}{N}\sum_{j=1}^N \tilde{I}\left(\frac{\|\Omega_j\|}{\kappa x}\right)
\]
to see
\[ g(x)\le f(x)\le h(x). \]

\medskip

\subsection{Existence of $x_0$ and $x_1$}
Since we seek an equilibrium in $(B_\gamma)^N$, we require $d(I_n,R_i^\infty)<\gamma$ for every $i\in[N]$. 
By \eqref{D-20}, this condition is satisfied if
\[
\arcsin\left(\frac{\|\Omega_i\|}{\kappa\langle I^\infty\rangle}\right)<\gamma,
\]
which is equivalent to
\[
\langle I^\infty\rangle
>
\frac{\|\Omega_i\|}{\kappa\sin\gamma},
\qquad i\in[N].
\]
We set
\[ G(x) = x-g(x), \quad H(x) = x-h(x). \]
Then, it follows from the boundedness of $\tilde I$ that 
\[\lim_{x\rightarrow\infty}H(x) = \infty.\]
Hence, $x_1$ can be obtained by selecting sufficiently big number; so it is enough to show the existence of $x_0$ such that
\[x_0 > \frac{\max_{j\in[N]}\|\Omega_j\|}{\kappa\sin\gamma}\quad \text{and}\quad G(x_0)\le 0.\]
Suppose that the coupling strength $\kappa$ satisfies
\[
\kappa  > \frac{\max_{j\in [N]}\|\Omega_j\|}{\sin\gamma\, \tilde I(\gamma)}.
\]
Then, since $\tilde I$ is a decreasing function, we obtain
\begin{align*}
G\Big(\frac{\max_{j\in [N]}\|\Omega_j\|}{\kappa\sin\gamma}\Big) &= \frac{\max_{j\in [N]}\|\Omega_j\|}{\kappa\sin\gamma} - \frac{1}{N}\sum_{j=1}^N \tilde{I}\left(\arcsin\left(\frac{\|\Omega_j\|}{\max_{j\in [N]}\|\Omega_j\|}\sin\gamma\right)\right)\\
&< \tilde I(\gamma)-\frac 1 N \sum_{j=1}^N\tilde I(\gamma)=0.
\end{align*}
Therefore, by continuity of $G$ we may take 
\[
x_0:=\frac{\max_{j\in[N]}\|\Omega_j\|}{\kappa\sin\gamma}+\varepsilon,
\]
for small enough $\varepsilon>0$ such that 
\[x_0 > \frac{\max_{j\in[N]}\|\Omega_j\|}{\kappa\sin\gamma}\quad \text{and}\quad G(x_0)\le 0.\]
Consequently,
\[
x_0-f(x_0)\le x_0-g(x_0)\le 0,
\qquad
x_1-f(x_1)\ge x_1-h(x_1)\ge 0.
\]
By intermediate value theorem, 
\[\exists\ x^*\in[x_0, x_1] \quad \text{such that}\quad x^* = f(x^*).\]
Thus $\langle I^\infty\rangle=x^*$ solves \eqref{D-19-1}. 

\subsection{Finding $R_i^\infty$ from $\langle I^\infty \rangle$}
Now we have the existence of $\langle I^\infty\rangle$ that solves \eqref{D-19-1} and satisfies
\begin{equation}\notag\label{D-21}
\langle I^\infty \rangle > \frac{\max_{j\in[N]}\|\Omega_j\|}{\kappa \sin\gamma}.
\end{equation}
 
The equilibrium $R_i^\infty$ with given $\langle I^\infty \rangle$ can be explicitly written as follows:
for a orthogonal matrix $P_i$ that
	\[\Omega_i \sim P_i^T\mathrm{diag}(\lambda_{i1}J, \cdots, \lambda_{im_i}J, O_{n-2m_i}) P_i,\]
We set
\[R_i^\infty =  P_i^T \Lambda_i^\infty P_i\]
where $\Lambda_i$ is defined in \eqref{D-16-2}, with
\[\theta_{ij}^\infty = \arcsin \Big(\frac{\lambda_{ij}}{\kappa \langle I^\infty \rangle}\Big).\]
Then we have
\begin{equation*}
\begin{aligned}
    \frac{R_i^\infty - (R_i^\infty)^T}{2} 
    &= P_i^T \mathrm{diag}(\sin\theta_{i1}J, \cdots, \sin\theta_{im_i}J, O_{n-2m_i})P_i^T \\
    &= P_i^T \mathrm{diag}\Big(\frac{\lambda_{i1}}{\kappa \langle I^\infty \rangle}, \cdots, \frac{\lambda_{im_i}}{\kappa \langle I^\infty \rangle}, O_{n-2m_i}\Big) P_i =\frac{\Omega_i}{\kappa \langle I^\infty \rangle}
\end{aligned}
\end{equation*}
Hence $R_i^\infty$ is a solution to $\eqref{D-16-1}$.

\begin{remark}
We don't have the uniqueness of the solution $\langle I^\infty \rangle$ to \eqref{D-19}.
Hence the uniqueness of the equilibrium is not ensured. 
Although the uniqueness in the subset $B_\gamma$ will be shown in the following theorem, we can actually show the possibility of uncountably many equilibrium in homogeneous Winfree model on full $SO(n)$ in Section~\ref{sec:5.2}.
\end{remark}

Next, we are ready to show the uniqueness of the equilibrium in $B_\gamma$, and provide an estimate on relaxation toward the equilibrium.
\begin{theorem}\label{T4.1}
Suppose that the initial data and parameters satisfy $({\mathcal F}_A1)$ and, in addition, there exists an index $i_* \in [N]$ such that $ R_{i_*}^0 \in B_{\gamma_0},$ and
\begin{equation}\notag\label{D-16-3}
\begin{aligned}
&R_i^0 \in B_{\Gamma_i(\gamma)}, \quad \forall\, i \in [N], \quad \lambda_1:= \kappa(\cos \gamma\,\tilde I(\gamma)-\gamma\,\text{Lip*}(I)) > 0, \\
&\kappa > \kappa_c(\gamma,1) := \frac{N\max_{j\in[N]}\|\Omega_j\|}{\sin(2\sin(\gamma/2))\,\tilde I(\gamma)},
\end{aligned}
\end{equation}
and let $\mathbf R(t)=\{R_i(t)\}$ be a global solution to \eqref{A-2}. Then there exist a unique equilibrium $\mathbf R^\infty\in (B_\gamma)^N$ such that
\[
\|\mathbf R(t)-\mathbf R^\infty\|_{\ell^1}\lesssim e^{-\lambda_1 t},
\qquad t\to\infty.
\]
\end{theorem}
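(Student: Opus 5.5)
The plan is to assemble Theorem~\ref{T4.1} from three ingredients already available: the leader--follower result (Corollary~\ref{C3.1}) to get eventual confinement, the fixed-point construction of Section~\ref{sec:4} to get an equilibrium in $(B_\gamma)^N$, and the $\ell^1$-stability estimate of Theorem~\ref{T3.1} to get uniqueness and the exponential rate.

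\emph{Step 1 (eventual confinement).} The hypotheses of Theorem~\ref{T4.1} contain exactly those of Corollary~\ref{C3.1}. Hence for every $i\in[N]$,
\[
\limsup_{t\to\infty} d(I_n,R_i(t)) < \gamma,
\]
and this inequality is strict. Since $N$ is finite, there is a time $T\ge 0$ such that $R_i(t)\in B_\gamma\subset\overline{B_\gamma}$ for all $t\ge T$ and all $i\in[N]$. The shifted trajectory $\tilde{\mathbf R}(t):=\mathbf R(t+T)$ is again a global solution of \eqref{A-2}, because the system is autonomous, and it satisfies the a priori condition \eqref{D-13-1}.

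\emph{Step 2 (existence of an equilibrium in $(B_\gamma)^N$).} I would invoke the construction of Section~\ref{sec:4}, which needs
\[
\kappa>\frac{\max_j\|\Omega_j\|}{\sin\gamma\,\tilde I(\gamma)}.
\]
This follows from $\kappa>\kappa_c(\gamma,1)$. Indeed $N\ge1$, and $0<2\sin(\gamma/2)\le\gamma<\pi/2$ together with monotonicity of $\sin$ on $[0,\pi/2]$ give $\sin(2\sin(\gamma/2))\le\sin\gamma$, so $\kappa_c(\gamma,1)\ge \max_j\|\Omega_j\|/(\sin\gamma\,\tilde I(\gamma))$. Section~\ref{sec:4} then yields a fixed point $\langle I^\infty\rangle=x^*$ of $f$ with $x^*>\max_j\|\Omega_j\|/(\kappa\sin\gamma)$, and the explicit formula
\[
R_i^\infty=P_i^T\Lambda_i^\infty P_i,\qquad \theta_{ij}^\infty=\arcsin\Big(\frac{\lambda_{ij}}{\kappa x^*}\Big),
\]
solves \eqref{D-16-1}, hence \eqref{D-16}. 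Here the step needing care is consistency: I must check that $\langle I^\infty\rangle$ recomputed from these $R_i^\infty$ equals $x^*$. This holds because $d(I_n,R_i^\infty)$ is exactly the argument of $\tilde I$ inside $f$, so $\frac1N\sum_i I(R_i^\infty)=f(x^*)=x^*$. Finally, \eqref{D-20} gives $d(I_n,R_i^\infty)\le\arcsin\big(\|\Omega_i\|/(\kappa x^*)\big)<\gamma$, so $\mathbf R^\infty\in(B_\gamma)^N$.

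\emph{Step 3 (uniqueness and rate).} The constant curve $\mathbf S(t)\equiv\mathbf R^\infty$ is a global solution lying in $\overline{B_\gamma}$. Applying Theorem~\ref{T3.1} to $\tilde{\mathbf R}$ and $\mathbf S$ (using $\lambda_1>0$) gives
\[
\|\mathbf R(t)-\mathbf R^\infty\|_{\ell^1}\le e^{-\lambda_1(t-T)}\|\mathbf R(T)-\mathbf R^\infty\|_{\ell^1}\le \sqrt{2n}\,N\,e^{\lambda_1T}e^{-\lambda_1 t},\qquad t\ge T,
\]
which is the claimed $\lesssim e^{-\lambda_1 t}$. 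For the constant I used $\|R-S\|\le\|R\|+\|S\|=2\sqrt{n/2}=\sqrt{2n}$ on $SO(n)$. For uniqueness, if $\mathbf R^\infty$ and $\mathbf S^\infty$ are two equilibria in $(B_\gamma)^N$, Theorem~\ref{T3.1} applied to the two constant solutions gives $\|\mathbf R^\infty-\mathbf S^\infty\|_{\ell^1}\le e^{-\lambda_1 t}\|\mathbf R^\infty-\mathbf S^\infty\|_{\ell^1}$ for all $t\ge0$, which forces equality.

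The main obstacle is Step 1. Theorem~\ref{T3.1} requires confinement for \emph{all} times, whereas Corollary~\ref{C3.1} only gives a $\limsup$ bound. This is resolved by the strict inequality, which produces a finite entrance time $T$, together with the time shift. The price is that the implicit constant in $\lesssim$ depends on $T$, and hence on the initial data.
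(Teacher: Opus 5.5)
Your proposal is correct and follows the paper's own proof. Corollary~\ref{C3.1} gives eventual entry of all oscillators into $B_\gamma$, the fixed-point construction of Section~\ref{sec:4} gives an equilibrium in $(B_\gamma)^N$, and Theorem~\ref{T3.1} is applied to the time-shifted trajectory against the stationary solution for the rate, and to two stationary solutions for uniqueness.

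You are more explicit than the paper on points it leaves implicit: that $\kappa>\kappa_c(\gamma,1)$ implies the Section~\ref{sec:4} condition $\kappa>\max_j\|\Omega_j\|/(\sin\gamma\,\tilde I(\gamma))$, the self-consistency $\frac1N\sum_i I(R_i^\infty)=x^*$, and the time shift needed for the all-time hypothesis \eqref{D-13-1}. One small caveat: \eqref{D-20} was derived assuming the equilibrium already lies in $B_\gamma$, so citing it to show $\mathbf R^\infty\in(B_\gamma)^N$ is slightly circular. For your explicit angles $\theta_{ij}^\infty=\arcsin(\lambda_{ij}/(\kappa x^*))$, however, the bound $d(I_n,R_i^\infty)\le\arcsin(\|\Omega_i\|/(\kappa x^*))$ follows directly, because $a\mapsto\arcsin(a)/a$ is increasing and $\sum_j\lambda_{ij}^2=\|\Omega_i\|^2$.
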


\begin{proof}
By Corollary~\ref{C3.1}, there exists $t_0\ge 0$ such that
\[
\mathbf R(t)\in (B_\gamma)^N
\qquad \text{for all } t\ge t_0.
\]
We have proved the existence of an equilibrium $\mathbf R^\infty\in (B_\gamma)^N$. Since $\mathbf R^\infty$ is a stationary solution of \eqref{A-2}, Theorem~\ref{T3.1} applied to $\mathbf R(t)$ and $\mathbf R^\infty$ gives
\[
\|\mathbf R(t)-\mathbf R^\infty\|_{\ell^1}
\le
e^{-\lambda_1 (t-t_0)}
\|\mathbf R(t_0)-\mathbf R^\infty\|_{\ell^1},
\qquad t\ge t_0.
\]
This proves exponential convergence.

One can prove the uniqueness of equilibrium in $B_\gamma$ as follows:
Let $\widetilde{\mathbf R}^\infty\in(B_\gamma)^N$ be another equilibrium. Applying Theorem~\ref{T3.1} to the two stationary solutions $\mathbf R^\infty$ and $\widetilde{\mathbf R}^\infty$, we obtain
\[
\|\mathbf R^\infty-\widetilde{\mathbf R}^\infty\|_{\ell^1}
\le
e^{-\lambda_1t}\|\mathbf R^\infty-\widetilde{\mathbf R}^\infty\|_{\ell^1}
\qquad \forall\, t\ge 0.
\]
Hence $\mathbf R^\infty=\widetilde{\mathbf R}^\infty$.
\end{proof}

As an alternative relaxation result, if we use Corollary~\ref{C3.2} instead of Corollary~\ref{C3.1}, we obtain the following improvement.

\begin{corollary}\label{C4.1}
Suppose that the initial data and parameters satisfy $({\mathcal F}_A1)$ and, in addition, there exists an index set $K$ of cardinality $N_0$ such that $R_l^0 \in B_{\gamma_0}$ for every $l\in K$, and
\begin{equation}\notag\label{D-16-4}
\begin{aligned}
&
R_i^0 \in B_{\Gamma_i(\gamma)}, \quad \forall\, i\in[N], \quad \lambda_1:= \kappa(\cos \gamma\,\tilde I(\gamma)-\gamma\,\text{Lip*}I) > 0,\\
&\kappa > \kappa_c(\gamma,N_0)
:=
\frac{N}{N_0}\frac{\max_{j\in[N]}\|\Omega_j\|}{\sin(2\sin(\gamma/2))\,\tilde I(\gamma)},
\end{aligned}
\end{equation}
and let $\mathbf R(t)=\{R_i(t)\}$ be a global solution to \eqref{A-2}. Then there exist a unique equilibrium $\mathbf R^\infty\in(B_\gamma)^N$ such that
\[
\|\mathbf R(t)-\mathbf R^\infty\|_{\ell^1}\lesssim e^{-\lambda_1 t},
\qquad t\to\infty.
\]
\end{corollary}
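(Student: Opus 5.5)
The argument follows the proof of Theorem~\ref{T4.1}, with Corollary~\ref{C3.2} playing the role of Corollary~\ref{C3.1}. There are three steps: show the trajectory eventually enters $(B_\gamma)^N$, produce an equilibrium in $(B_\gamma)^N$, and then apply the $\ell^1$-stability estimate of Theorem~\ref{T3.1}.

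\textbf{Step 1 (entry into $B_\gamma$).} The hypotheses are those of Corollary~\ref{C3.2}. That corollary gives, for every $j\in[N]$,
\[
\limsup_{t\to\infty} d(I_n,R_j(t)) < 2\arcsin\Big(\tfrac12\arcsin\Big(\tfrac{\kappa_c(\gamma,N_0)}{\kappa}\sin(2\sin\tfrac{\gamma}{2})\Big)\Big) < \gamma.
\]
Since the bound is strict, there is $t_0\ge0$ with $\mathbf R(t)\in(B_\gamma)^N$ for all $t\ge t_0$. This gives the a priori condition \eqref{D-13-1} on $[t_0,\infty)$. Because \eqref{A-2} is autonomous, we shift time by $t_0$.

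\textbf{Step 2 (existence of an equilibrium).} We reuse the construction of Section~\ref{sec:4}. That construction needs
\[
\kappa>\frac{\max_j\|\Omega_j\|}{\sin\gamma\,\tilde I(\gamma)}.
\]
We check that this follows from $\kappa>\kappa_c(\gamma,N_0)$. We have $N/N_0\ge1$, and $2\sin(\gamma/2)\le\gamma<\pi/2$. Since $\sin$ is increasing on $[0,\pi/2]$, this gives $\sin(2\sin(\gamma/2))\le\sin\gamma$, hence $\kappa_c(\gamma,N_0)$ dominates the required threshold. The fixed-point argument then yields $\langle I^\infty\rangle>\max_j\|\Omega_j\|/(\kappa\sin\gamma)$. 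The explicit construction from Section~\ref{sec:4} gives an equilibrium $\mathbf R^\infty$. By \eqref{D-20}, it satisfies $d(I_n,R_i^\infty)\le\arcsin(\|\Omega_i\|/(\kappa\langle I^\infty\rangle))<\gamma$, so $\mathbf R^\infty\in(B_\gamma)^N$.

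\textbf{Step 3 (convergence and uniqueness).} The equilibrium $\mathbf R^\infty$ is a stationary solution lying in $\overline{B_\gamma}$. Applying Theorem~\ref{T3.1} (with $\lambda_1>0$ by hypothesis) to $\mathbf R(\cdot+t_0)$ and $\mathbf R^\infty$ gives
\[
\|\mathbf R(t)-\mathbf R^\infty\|_{\ell^1}\le e^{-\lambda_1(t-t_0)}\|\mathbf R(t_0)-\mathbf R^\infty\|_{\ell^1}.
\]
This is the stated decay, with implied constant $e^{\lambda_1 t_0}\|\mathbf R(t_0)-\mathbf R^\infty\|_{\ell^1}$. For uniqueness, let $\widetilde{\mathbf R}^\infty$ be another equilibrium in $(B_\gamma)^N$. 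Theorem~\ref{T3.1} applied to the two constant solutions gives $\|\mathbf R^\infty-\widetilde{\mathbf R}^\infty\|_{\ell^1}\le e^{-\lambda_1t}\|\mathbf R^\infty-\widetilde{\mathbf R}^\infty\|_{\ell^1}$ for all $t$, which forces them to be equal.

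The main point needing care is Step~2: the coupling threshold used in Section~\ref{sec:4} must be implied by the weaker bound $\kappa_c(\gamma,N_0)$. This reduces to the monotonicity comparison $\sin(2\sin(\gamma/2))\le\sin\gamma$ on $[0,\pi/2]$. Everything else carries over verbatim from Theorem~\ref{T4.1}.
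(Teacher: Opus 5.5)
Your proposal is correct and is essentially the paper's own argument: the paper obtains Corollary~\ref{C4.1} by rerunning the proof of Theorem~\ref{T4.1} with Corollary~\ref{C3.2} in place of Corollary~\ref{C3.1}, then using the equilibrium constructed in Section~\ref{sec:4} and Theorem~\ref{T3.1} for convergence and uniqueness. Your explicit check that $\kappa>\kappa_c(\gamma,N_0)$ implies the Section~\ref{sec:4} threshold $\kappa>\max_j\|\Omega_j\|/(\sin\gamma\,\tilde I(\gamma))$, via $N/N_0\ge 1$ and $\sin(2\sin(\gamma/2))\le\sin\gamma$, is a step the paper leaves implicit and is worth keeping.
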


\begin{remark}
For the high-dimensional Kuramoto model, complete state synchronization can occur even for a heterogeneous ensemble; see \cite{shi2023complete}. In contrast, for the high-dimensional Winfree model on $SO(n)$, complete state synchronization forces identical natural frequencies. Indeed, if
\[
R_i(t)\equiv R(t)
\qquad \text{for all } i\in[N],
\]
then subtracting the equations for indices $i$ and $j$ gives
\[
(\Omega_i-\Omega_j)R(t)=0.
\]
Since $R(t)\in SO(n)$ is invertible, it follows that $\Omega_i=\Omega_j$ for all $i,j\in[N]$. Thus complete state synchronization is possible only in the identical-oscillator case.
\end{remark}

\section{Homogeneous Winfree ensemble}\label{sec:5}
\setcounter{equation}{0}
In this section, we study the long-time behavior of the identical Winfree ensemble on $SO(n)$ whose dynamics is governed by the Cauchy problem for the following system:
\begin{equation}\label{E-1}
\begin{cases}
\displaystyle \dot R_i = \Omega R_i + \frac{\kappa}{2} (I_n-R_i^2) \Big( \frac{1}{N}  \sum_{j=1}^N I(R_j) \Big),\quad t >0,\\
\Omega \in \mathfrak{so}(n),\quad R_i^0 \in SO(n), \quad \forall~ i \in [N].
\end{cases}
\end{equation}
More precisely, we derive the following two assertions:
\vspace{0.1cm}
\begin{enumerate}
\item
System \eqref{E-1} exhibits complete state synchronization.
\vspace{0.1cm}
\item
The set of equilibria of \eqref{E-1} can be uncountable.
\end{enumerate}

\subsection{Complete state synchronization} \label{sec:5.1}
We first introduce the second sufficient framework $({\mathcal F}_B)$ in terms of initial data, and parameters:
\vspace{0.1cm}
 \begin{itemize}
 \item
 $({\mathcal F}_B1)$:~Parameters $\beta, \gamma_0$ and $\gamma$ satisfy 
 \[
0 <  \beta  < \frac{\pi}{2}, \quad  0 < \gamma_0 < 2\sin \Big (\frac{\beta}{2} \Big), \quad  \gamma = 2\arcsin \Big (\frac{\gamma_0}{2} \Big ) < \beta.
 \]
 Recall that $\beta$ is the upper bound of the support of $\tilde I$ (see \eqref{B-6}).
 \vspace{0.2cm}
 \item
 $({\mathcal F}_B2)$:~Coupling strength is sufficiently large such that 
 \[  \kappa  > \frac{ \|\Omega \|}{\tilde{I}(\gamma)\sin \Big(2\sin \frac{\gamma}{2} \Big )}. \]
 \item
  $({\mathcal F}_B3)$:~Initial data is confined in some Ball:
  \[
   R_i^0 \in B_{\gamma_0}, \quad \forall~i\in [N].
  \]
  \end{itemize}  
Note that $({\mathcal F}_B2) - ({\mathcal F}_B3)$ are basically the same with $({\mathcal F}_A2) - ({\mathcal F}_B3)$, and $({\mathcal F}_B1)$ is stronger than $({\mathcal F}_A1)$.
Therefore, $({\mathcal F}_B)$ implies $({\mathcal F}_A)$. 
\begin{theorem} \label{T5.1}
[Emergence of complete state synchronization]
Suppose that initial data and system parameters satisfy the framework $({\mathcal F}_B1) - ({\mathcal F}_B3)$, and let $\mathbf{R}=\{R_i\}_{i=1}^N$ be a global solution to \eqref{E-1}. Then, for a positive constant $\lambda_2 = \kappa \cos \gamma\, \tilde I(\gamma) > 0$ such that 
\begin{equation*}
    \|R_i(t) - R_j(t)\| \le e^{-\lambda_2 t}\|R_i^0 - R_j^0\|, \quad t >0.
\end{equation*}
\end{theorem}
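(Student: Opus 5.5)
We reuse the energy estimate from the proof of Theorem \ref{T3.1}, applied to two oscillators $R_i$ and $R_j$ of the \emph{same} ensemble rather than to two different solutions. First, since $({\mathcal F}_B)$ implies $({\mathcal F}_A)$, Proposition \ref{P3.1} gives the a priori bound $R_k(t)\in\overline{B_\gamma}$ for all $k\in[N]$ and $t\ge0$. Because the support of $\tilde I$ is $[0,\beta]$ with $\gamma<\beta$ and $\tilde I$ is decreasing, this yields
\[
\langle I^R(t)\rangle:=\frac1N\sum_{k=1}^N I(R_k(t))\ \ge\ \tilde I(\gamma)>0,\qquad t\ge0 .
\]

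Next we subtract the equations in \eqref{E-1} for indices $i$ and $j$. The natural frequencies are identical and the mean influence $\langle I^R\rangle$ is common to both, so
\[
\frac{d}{dt}(R_i-R_j)=\Omega(R_i-R_j)-\frac{\kappa}{2}\langle I^R\rangle\,(R_i^2-R_j^2).
\]
The cross term $\mathcal I_{23}$ of Theorem \ref{T3.1}, which carried the Lipschitz constant $\text{Lip*}I$, is therefore absent. This is why the rate is $\lambda_2=\kappa\cos\gamma\,\tilde I(\gamma)$ and no smallness condition on $\gamma\,\text{Lip*}I$ is needed. Taking the inner product with $R_i-R_j$:
\begin{itemize}
\item The $\Omega$-term vanishes by skew-symmetry, as in \eqref{D-14-1}.
\item We write $R_i^2-R_j^2=(R_i-R_j)(R_i+R_j)-[R_i,R_j]$. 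The commutator contributes zero by the trace computation \eqref{D-14-3}.
\item In the remaining term, split $R_i+R_j$ into symmetric and skew parts. The skew part contributes zero by \eqref{D-14-4-6}.
\item For the symmetric part, Lemma \ref{L3.5} together with $d(I_n,R_k)\le\gamma<\pi/2$ gives $\frac{R_k+R_k^T}{2}\succeq\cos\gamma\, I_n$. Since $(R_i-R_j)^T(R_i-R_j)\succeq 0$ by Lemma \ref{L3.4}, Lemma \ref{L2.3}(6) gives a lower bound of $2\cos\gamma\|R_i-R_j\|^2$, exactly as in \eqref{D-14-4-5}.
\end{itemize}
Combining these with $\langle I^R\rangle\ge\tilde I(\gamma)$ gives
\[
\frac12\frac{d}{dt}\|R_i-R_j\|^2\le-\kappa\,\tilde I(\gamma)\cos\gamma\,\|R_i-R_j\|^2 .
\]

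Finally, Gr\"onwall's inequality applied to $\|R_i-R_j\|^2$ gives $\|R_i-R_j\|^2\le e^{-2\lambda_2t}\|R_i^0-R_j^0\|^2$, and taking square roots yields the claim. Working with the squared norm avoids dividing by $\|R_i-R_j\|$ where it might vanish. Once the invariance of $\overline{B_\gamma}$ is available, every step is a direct transcription of Theorem \ref{T3.1}. The only point that needs care is confirming that Proposition \ref{P3.1} applies under $({\mathcal F}_B)$, which holds because, with $\Omega_i=\Omega$ for all $i$, $({\mathcal F}_B2)$ is precisely $({\mathcal F}_A2)$. The factor-of-two bookkeeping when passing from the squared norm to the norm should also be checked so that the rate comes out as $\lambda_2$ rather than $2\lambda_2$ or $\lambda_2/2$.
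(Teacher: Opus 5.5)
Your proposal is correct and follows the paper's proof essentially step for step: you use $({\mathcal F}_B)\Rightarrow({\mathcal F}_A)$ and Proposition~\ref{P3.1} to keep every $R_k$ in $\overline{B_\gamma}$, rerun the energy computation of Theorem~\ref{T3.1} on $R_i-R_j$ (the $\text{Lip*}I$ cross term is absent because the mean influence is shared) to get $\frac{d}{dt}\|R_i-R_j\|^2\le -2\kappa\cos\gamma\,\langle I\rangle\,\|R_i-R_j\|^2$, and finish with $\langle I\rangle\ge\tilde I(\gamma)$ and Gr\"onwall. Your factor-of-two bookkeeping is also right, so the rate comes out exactly $\lambda_2$.
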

\begin{proof} 
Recall that, we set 
\[
\langle I(t) \rangle := \frac{1}{N}\sum_{i=1}^N I(R_i(t)) = \frac1N \sum_{i=1}^{N} \tilde I(d(I_n, R_i(t))).
\]
Since we have $({\mathcal F}_A)$ and Proposition~\ref{P3.1}, 
\[ R_i(t)\in \overline{B_\gamma} \quad \mbox{for all $t\ge 0$}. \]
We use $\eqref{E-1}_1$ to get 
\[ \frac{d}{dt} (R_i - R_j) = \Omega(R_i-R_j)-\frac{\kappa}{2} \langle I \rangle (R_i^2-R_j^2).
\]
Now we follow a similar argument in the proof of Theorem~\ref{T3.1} to get
\begin{equation} \label{E-2}
\frac{d}{dt} \|R_i - R_j\|^2 \leq -2\cos \gamma\, \kappa \langle I \rangle \|R_i - R_j\|^2 = -2 \lambda_2  \|R_i - R_j\|^2.
\end{equation}
Since $\tilde I$ is decreasing and $d(I_n, R_k(t))\le \gamma<\beta,$ one has 
\begin{equation} \label{E-3}
 \langle I(t) \rangle = \frac{1}{N} \sum_{k=1}^{N} \tilde I(d(I_n, R_k(t))) \ge  \tilde I(\gamma)  >0.
\end{equation}
We combine \eqref{E-2} and \eqref{E-3} to find 
\begin{equation*}
\frac{d}{dt} \|R_i - R_j\|^2 \le -2\kappa  \tilde I(\gamma) \cos \gamma \|R_i - R_j\|^2.
\end{equation*}
Gronwall's lemma then gives the desired estimate:
\begin{equation*}
\|R_i(t) - R_j(t)\| \le \|R_i^0 - R_j^0\| \exp(-\lambda_2 t).
\end{equation*}
\end{proof}
\begin{remark}
One can actually show the exponential complete state synchronization from exponential $\ell^1$-stability in Theorem~\ref{T3.1}, for the indentical oscillator case. 
Indeed, by the symmetry, if $\mathbf{R}=\{R_1,\dots R_N\}$ is a global solution satisfying the conditions of Theorem~\ref{T3.1}, then the configuration $\mathbf{S}$ obtained by  switching $R_i$ and $R_j$ in $\mathbf{R}$ is also a solution satisfying the same conditions. By $\ell^1$-stability between $\mathbf{R}$ and $\mathbf{S}$, 
\[ \|R_i-R_j\|\le e^{-\lambda_1 t}\|R_i^0-R_j^0\|, \quad t > 0. \]
Thus, under the assumptions of Theorem~\ref{T3.1}, one obtains exponential complete state synchronization as well. Nevertheless, we state Theorem~\ref{T5.1} separately because, in the identical-frequency case, it gives a sharper decay rate. More precisely,
\[
\kappa\bigl(\cos\gamma\,\tilde I(\gamma)-\gamma\,\mathrm{Lip}^*I\bigr)=\lambda_1
<
\lambda_2=\kappa\cos\gamma\,\tilde I(\gamma).
\]
It also allows a weaker assumption, because the term involving $\mathrm{Lip}^*I$ disappears when the natural frequencies are identical.
\end{remark}

\vspace{0.5cm}

\subsection{Cardinality of equilibria} \label{sec:5.2}
Next, we classify a set of equilibrium for the Winfree model for $SO(n)$ for a homogeneous ensemble. The equilibrium $R_i^\infty$ must satisfy
\begin{equation} \label{E-4}
\Omega = \frac{\kappa \langle I^{\infty} \rangle}{2}(R_i^{\infty} - (R_i^{\infty})^T),
\quad\text{where}\quad\langle I^\infty\rangle := \frac1N\sum_{j=1}^N I(R_j^\infty).
\end{equation}
We consider two cases:  
\[ \quad \Omega=0 \quad \mbox{or} \quad  \Omega \ne 0. \]
Note that we do not restrict to the case for $R_i^{\infty} \in B_\gamma \subset B_{\frac\pi 2}$. \newline

\noindent $\bullet$~Case A: Suppose that 
\[  \Omega=0. \]
In this case, it follows from \eqref{E-4} that 
\[ \langle I^{\infty} \rangle (R_i^{\infty} - (R_i^{\infty})^T) = 0. \]
If \(\langle I^{\infty} \rangle = 0\), since \(I\ge0\), every $I(R_i^{\infty})$ must be zero:
\[ I(R_i^{\infty}) = 0 \quad \implies \quad  d(I_n, R_i^{\infty}) \ge \beta \quad \mbox{for all $i\in [N]$.}\]

\noindent If \(\langle I^{\infty} \rangle > 0\), then we have
\[ R_i^{\infty} = (R_i^{\infty})^T, \quad \mbox{i.e.,} \quad (R_i^{\infty})^2 = I. \]
This means that every eigenvalue of $R_i^{\infty}$ is either 1 or -1, and since the multiplicity of $-1$ must be even, the set of equilibria can be decomposed into two sets:
\[
\mathcal{E} = \mathcal{A} \cup \mathcal{B},
\] where
\begin{align*}
    \mathcal{A} &= \{(R_1, R_2, \dots, R_N) \in SO(n)^N : d(I_n, R^{\infty}_i) \ge \beta, \ \forall i \in [N]\}, \\
    \mathcal{B} &= \{(R_1, R_2, \dots, R_N) \in SO(n)^N : \text{every eigenvalue of } R^{\infty}_i \in \{1,-1\}, \\ 
    &\text{ and the multiplicity of } -\!1 \text{ is even,} \quad \forall\, i \in [N]\}.
\end{align*}

\vspace{0.5cm}

\noindent $\bullet$~Case B: Suppose that $\Omega\neq 0$.
Then
\[
\frac{R_i^{\infty} - (R_i^{\infty})^T}{2}=\frac{\Omega}{\kappa \langle I^{\infty} \rangle}.
\]
Then as we did in Section~\ref{sec:4}, we first write $\Omega$ as
\begin{equation}\notag\label{E-5}
    \Omega = P^T \text{diag}(\lambda_{1} J, \cdots, \lambda_{m}J, O_{n-2m}) P, \quad P\in O(n).
\end{equation}
for some postive constants $\lambda_1, \cdots, \lambda_m$. Then we may write $R_i\sim \Lambda_i$ where $\Lambda_i$ is defined in \eqref{D-16-2}, where $\theta_{ij}$ now satisfies
\[\{\sin^2\theta_{i1}, \cdots, \sin^2\theta_{im}\} = \Big\{\frac{\lambda_1^2}{\kappa^2 \langle I^{\infty} \rangle^2}, \cdots, \frac{\lambda_m^2}{\kappa^2 \langle I^{\infty} \rangle^2 } \Big\}\]
as a multisets. Note that since the condition $\theta_{ij}<\frac\pi 2$ are now removed, so we can not guarantee the relation like \eqref{D-17-1}.

\begin{remark}
According to the proof of existence and uniqueness of equilibrium without using $\ell^1$-stability in Section 4.3, for $\kappa > \frac{\| \Omega \|}{\sin \gamma\, \tilde{I}(\gamma)}$, we can prove the existence of a solution when 
\[ \theta^{\infty}_{ij} = \arcsin \frac{\lambda_j}{\kappa \langle I^{\infty}\rangle} \quad \mbox{for all $i, j \in [N]$}. \] 
We now discuss a different sufficient condition that also allows the other branch.
Let the choice of $\theta_{ij}$ be encoded by $\delta_{ij}\in\{0,1\}$:
\[
\theta^{\infty}_{ij}(x)=
\begin{cases}
\arcsin\!\big(\frac{|\lambda_j|}{\kappa x}\big), & \delta_{ij}=0,\\[4pt]
\pi-\arcsin\!\big(\frac{|\lambda_j|}{\kappa x}\big), & \delta_{ij}=1,
\end{cases}
\qquad x \ge x_*:= \frac{\lambda_*}{\kappa}=\frac{\max_{1\le j\le m}|\lambda_j|}{\kappa},
\]
and we set
\[
r_i(x):=\Big(\sum_{j=1}^m \theta^{\infty}_{ij}(x)^2\Big)^{1/2},
\qquad
T(x):=\frac1N\sum_{i=1}^N \tilde I\big(r_i(x)\big).
\]
Then the fixed-point equation is 
\[ x=T(x). \]
Since $x -T(x)\to \infty$ as $x \to \infty$, so a sufficient condition for the existence of a fixed point is
\[
T(x_*)>x_*.
\]

Uniqueness is, however, much more complicated.
This is due to the fact that $r_i$ has a singularity at $x_*$, and even when $\beta$ is sufficiently small so that every branch takes $\delta_{ij}=0$, the map $T$ is not contractive.

In fact, one can construct $\tilde I$ such that the solution of the fixed point equation is uncountably many, even for the principal branch case ($\delta_{ij}=0$ for all $i\in [N],\ j\in [m]$). \newline

In principle branch case, $r_i(x)$ is now independent of $i$, so denote it by $r(x)$.
\[r_i= \sqrt{\sum_{j=1}^m (\theta_{ij}^\infty(x))^2} = \sqrt{\sum_{j=1}^m \arcsin^2\Big(\frac{\lambda_j}{\kappa x}\Big)}\eqqcolon r(x).\] The fixed point equation is now $x = \tilde I(r(x))$.
Note that $r$ is defined on $(\lambda_*/\kappa, \infty)$, and decreasing on there.
Choose $x_0\in (\lambda_*/\kappa, 1)$ and $\beta\in (r(x_0), \pi)$.
Define
\[
\tilde I(x)=\begin{cases}
1 & x\in [0,r(1)],\\
r^{-1}(x)& x\in [r(1), r(x_0)],\\
\text{interpolation of end points} & x\in [r(x_0), \beta],\\
0 & x\in [\beta, \infty).
\end{cases}
\]
Then one can check that $\tilde I$ is decreasing, Lipschitz, and has support $[0,\beta]$ so satisfies the assumptions in Section \ref{sec:4}.
Moreover, by construction
\[
\tilde I(r(x))= x \quad\text{for all }x\in[x_0, 1],
\]
which yields a continuum of fixed points.
\end{remark}

\section{Conclusion}\label{sec:6}
\setcounter{equation}{0}
In this work, we have provided a Winfree  model on $SO(n)$ and studied its emergent dynamics. 
We generalized the influence function to make it depend on the geodesic distance on $SO(n)$. We established a positively invariant trapping region, proved a leader--follower mechanism under sufficiently strong coupling, derived exponential $\ell^1$-stability, and obtained existence, uniqueness, and exponential convergence toward an equilibrium in a suitable neighborhood of the identity.
In the identical-oscillator regime, we proved exponential complete state synchronization and described the corresponding equilibrium configurations.

There are several interesting problems that have not been addressed in this paper. For example, it would be interesting to study constants of motion of the homogeneous Winfree matrix model on $SO(n)$, or we can think of other choice of the influence function. 
We leave these questions for a future work.

\appendix

\section{Proof of Lemma \ref{L2.3} and \ref{L4.1}} \label{App-A}
\setcounter{equation}{0}

\subsection{Proof of Lemma \ref{L2.3}}\label{Ap-A-1}
Below, we provide detailed proofs of each assertion in Lemma \ref{L2.3} one by one. \newline

\noindent (1) For $R \in SO(n)$, let $(\lambda, v)$ be the eigenvalue-eigenvector pair of $R$. Then the orthogonality of $R$ implies that $|\lambda|=1$. Hence there exists $\theta \in [-\pi, \pi]$ such that
\[\lambda = e^{\mathrm{i}\theta}.\]
If $\theta \ne 0$ and $\theta \ne \pi$, then since $R$ is a real matrix, $(\bar \lambda, \bar v)$ is also another eigenvalue-eigenvector pair of $R$. From the fact that $\bar\lambda = e^{-\mathrm{i}\theta}$, we obtain two eigenpairs $\{(e^{\mathrm{i} \theta}, v), (e^{-\mathrm{i} \theta}, \bar v)\}$. \newline

\noindent If $\theta = 0$, this corresponds to the eigenpair $(1, v)$. \newline

\noindent If $\theta = \pi$, this corresponds to the eigenpair $(-1, v)$. Since $\mathrm{det}(R) = 1$, the multiplicity of $-1$ must be even. 
Finally, we may put $-\theta$ instead of $\theta$ when $\theta<0$, so we may assume $\theta\in(0, \pi].$ 
This implies the desired first assertion. 

\vspace{0.5cm}

\noindent(2) Since $R$ is orthogonal, it is normal and therefore it is unitarily diagonalizable. Let
\[ R = V^\dagger\mathrm{diag}(e^{\mathrm{i}\theta_1}, e^{-\mathrm{i}\theta_1}, \cdots, e^{\mathrm{i}\theta_m}, 1, \cdots, 1)V, \]
where $V^\dagger$ is the Hermitian conjugate of $V$. For any non-real eigenvalue $\lambda = e^{\mathrm{i}\theta}$ and its corresponding eigenvector $v$, let $v = p + \mathrm{i}q$ for two real vectors $p$ and $q$. Then it satisfies
\begin{equation} \label{Ap-1}
Rp   = (\cos\theta )p - (\sin\theta) q, \quad Rq =  (\sin\theta) p + (\cos\theta) q.
\end{equation}
Now, we claim that $p$ and $q$ are linearly independent. Suppose the contrary holds, i.e., suppose that $p$ and $q$ are linearly dependent: for some $k\in \bbr$, $p=kq$.
Then \eqref{Ap-1} gives
\[ k\Big((\sin\theta)kq + (\cos\theta)q\Big) =  (\cos\theta)kq -(\sin\theta)q,\]
hence
\[(k^2+1) q\sin\theta =0 \quad \Longrightarrow \quad q\sin \theta=0.\]
However, if $\sin\theta=0$ or $q=0$, $\lambda$ becomes real which is a contradiction.
Hence $p, q$ are linearly independent. This shows that the vector space $W\coloneqq \mathrm{span}\{p, q\}$ is an invariant set of $R$, and $R\Big|_W$ is a rotation map with angle $\theta$ on $R$.
Thus, we may pick orthonormal basis $\beta=\{e_1, e_2\}$ so that \[[R]_{\beta}^{\beta} = \begin{pmatrix}
 \cos\theta & -\sin\theta	\\
 \sin\theta & \cos\theta
 \end{pmatrix}.\]
Then we repeat this process to all non-real eigenvalues to obtain the desired block diagonal result.

\vspace{0.5cm}

\noindent (3) This result is the immediate consequence of
\[\exp\begin{pmatrix}
0 & -\theta \\ \theta & 0
\end{pmatrix} = 
\begin{pmatrix}
	\cos\theta & -\sin\theta \\
	\sin\theta & \cos\theta
\end{pmatrix}.
\]

\vspace{0.5cm}

\noindent (4) Let $\Omega = Q^T H Q$ be the real Schur decomposition of $\Omega$. i.e. $Q$ is orthogonal matrix, and $H$ is upper quasi diagonal:
\[H = \begin{pmatrix}
    B_1 &*  & \cdots & * \\
    0& B_2 &\cdots &* \\
    \vdots & \vdots &\ddots& * \\
    0 & 0 & 0 & B_k
\end{pmatrix},\]
where $B_j$ are $1\times 1$ or $2\times 2$ block matrix with form
\[\begin{pmatrix}
    a & -b \\
    b & a
\end{pmatrix}.\]
Then the antisymmetric property of $\Omega$ eliminates all $1\times 1$ block, and forces $2\times 2$ block matrix to have the form 
\[\begin{pmatrix}
    0 & -\lambda \\
    \lambda & 0
\end{pmatrix} = \lambda J.\]
For some scalar $\lambda$. Since $\lambda J$ and $-\lambda J$ are orthogonally similar, we may assume the positivity of $\lambda$. This concludes the proof.
\vspace{0.5cm}

\noindent (5)
For every matrix Lie group $G$, it is well known that matrix exponential coincides with the exponential map of $G$.
Then by Hopf-Rinow theorem, there exists a globally length-minimizing geodesic $\gamma(t)$ connecting $I_n$ and $A$, i.e. , the length of $\gamma(t)$ is equal to the infimum of the lengths of all piecewise differential curve joining $I_n$ and $A$. 

It is also well known that
\[e^B = \begin{pmatrix}\cos\theta & -\sin\theta \\ \sin \theta & \cos\theta \end{pmatrix} \quad \Longleftrightarrow \quad B = \begin{pmatrix}
0 & -(\theta + 2\pi m)\\
\theta + 2\pi m & 0
\end{pmatrix}, \quad m\in \bbz.\]
This shows that, for the matrix $X$ defined in Lemma~\ref{L2.3} (3), the geodesic $\gamma(t)$ that connects $I_n$ and $A$ has the form
\[\gamma(t) = P^T \exp(t \tilde X)P,\]
where 
\[\tilde X = X + 2\pi \text{diag}(n_1 J, \cdots n_m J, O_{n-2m}), \quad n_j \in \bbz.\]
Note that the length of $\gamma(t)$ is equal to $\|\tilde X\|$, and we have
\[\|\tilde X\| = \sqrt{\frac{1}{2}\tr (\tilde X^T \tilde X)} = \sqrt{\sum_{j=1}^{m}(\theta_j+2\pi n_j)^2},\]
which is clearly minimized when $n_1 = \cdots =n_m = 0.$ This implies the desired results
\[\gamma(t) = P^T \exp (X) P, \quad d(I_n, A) = \sqrt{\sum_{j=1}^{m}\theta_j^2}.\]
For the last statment, let $C(A, B)$ be every curve connecting $A$ and $B$. i.e.
\[C(A, B) = \{\gamma:[0,1]\rightarrow SO(n)\, \big|\, \gamma(0)=A,\ \gamma(1)=B \, \}.\]
Then for $R, S\in SO(n)$, we construct a map $\varphi:C(R, S)\rightarrow C(I_n, R^T S)$ as follows:
\[\varphi(\gamma)(t) = R^T \gamma(t).\]
Then since the Riemannian metric $g$ is bi-invariant, the length of $\varphi(\gamma)$ and $\gamma$ is the same. Hence
\[d(R, S) = d(I_n, R^T S).\]

\noindent (6)
It is enough to show when $C=0$.
Since $A$ is positive semidefinite, we can compute a positive semidefinite square root $A^{1/2}$.
Then by symmetry
\[
\tr(AB)=\tr((A^T)^{1/2}BA^{1/2})\ge 0,
\]
since $(A^T)^{1/2}BA^{1/2}$ is positive semidefinite.This completes the proof of Lemma \ref{L2.3}.

\subsection{Proof of Lemma \ref{L4.1}} \label{App-A-2}
Let
\[
A=\mathrm{diag}(a_1J,\cdots,a_pJ,O_{n-2p}), \qquad
B=\mathrm{diag}(b_1J,\cdots,b_qJ,O_{n-2q}),
\]
and choose $P\in O(n)$ such that
\[
A=P^TBP.
\]
Then
\[
A^TA=(P^TBP)^T(P^TBP)=P^TB^TPP^TBP=P^TB^TBP,
\]
which shows that \(A^TA\) and \(B^TB\) are orthogonally similar. Hence they have the same eigenvalues, counted with multiplicities. On the other hand,
\[
A^TA=\mathrm{diag}(a_1^2I_2,\cdots,a_p^2I_2,O_{n-2p}),
\qquad
B^TB=\mathrm{diag}(b_1^2I_2,\cdots,b_q^2I_2,O_{n-2q}).
\]
Therefore, we have
\[p=q\quad \text{and}\quad \{a_1^2, \cdots, a_p^2\} = \{b_1^2, \cdots, b_q^2\}\]
as multisets. This completes the proof.

\end{document}